\begin{document}

\title{On Morita Equivalences Between KLR Algebras and VV Algebras}
\author{Ruari Walker}
\address{Institute of Algebra and Number Theory\\  University of Stuttgart \\
Pfaffenwaldring 57 \\ 70569 Stuttgart\\ Germany}
\email{ruari.walker@uni-stuttgart.mathematik.de}
\maketitle

\thispagestyle{empty}

\begin{abstract}
This paper is investigative work into the properties of a family of graded algebras recently defined by Varagnolo and Vasserot, which we call VV algebras. We compare categories of modules over KLR algebras with categories of modules over VV algebras, establishing various Morita equivalences. Using these Morita equivalences we are able to prove several properties of certain classes of VV algebras such as (graded) affine cellularity and affine quasi-heredity.
\end{abstract}

\section*{Introduction}
Varagnolo and Vasserot have recently defined a new family of graded algebras, the representation theory of which is closely related to the representation theory of the affine Hecke algebras of type B. Indeed, they prove in \cite{VaragnoloVasserot} that categories of finite-dimensional modules over these algebras are equivalent to categories of finite-dimensional modules over affine Hecke algebras of type $B$, $\mathrm{H}^B_n$. They also use these algebras to prove a conjecture of Enomoto and Kashiwara which states that the representations of the affine Hecke algebra of type $B$ categorify a simple highest weight module for a certain quantum group (see \cite{EnomotoKashiwara2}). These are the main motivating reasons behind studying these algebras. Throughout this work we refer to these algebras as VV algebras. One of the advantages of working with VV algebras is that they have a non-trivial grading, whilst affine Hecke algebras of type $B$ do not. Similarly to KLR algebras, VV algebras depend upon a quiver $\Gamma$, which now comes equipped with an involution $\theta$, and a dimension vector $\nu$ which is invariant under this involution. The vertices of $\Gamma$ are labelled by an indexing set $I$ which is an orbit arising from a $\mathbb{Z}\rtimes \mathbb{Z}_2$-action on the ground field. They also depend upon two non-zero elements of the field, $p$ and $q$, which correspond to the deformation parameters of $\mathrm{H}^B_n$. It turns out that there are different cases to consider when studying VV algebras; 
\begin{equation*}
\begin{array}{ll}
\textrm{\textbf{(A1)}}& p,q \not\in I \\
\textrm{\textbf{(A2)}}& q \in I, p \not\in I \\
\textrm{\textbf{(A3)}}& p \in I, q \not\in I
\end{array}
\end{equation*}
One can also consider the case where both $p,q \in I$ which we do not treat here. 
\\
\\
In this paper we compare categories of modules over KLR algebras with categories of modules over VV algebras. In case (A1) we find that these module categories are indeed equivalent. In other cases we find categories of modules over VV algebras are equivalent to categories of modules over the tensor product of KLR algebras with the path algebra of a certain quiver. Using these Morita equivalences, many properties of KLR algebras, and the given path algebra, can be transferred to classes of VV algebras. In particular, the main result of this work is that certain subclasses of VV algebras are affine cellular and even affine quasi-hereditary.
\begin{maintheorem}[\ref{corollary: VV aff qh when q in I}, \ref{corollary: VV aff qh when p,q not in I}, \ref{corollary:VV algs q in I are affine cellular}, \ref{corollary:VV are aff cellular when p,q not in I}] Let $\nu \in {^\theta}\mathbb{N}I$. If $p$ is not a root of unity and we are either in case (A1) or case (A2) with the additional assumption that $\nu$ has multiplicity one, the algebras $\vv$ are (graded) affine cellular and affine quasi-hereditary.
\end{maintheorem}
In Section \ref{chapter:KLR algebras and VV algebras}, we define KLR algebras of type $A$ and the VV algebras, which are our main objects of study, and prove a proposition which shows how these algebras are related. Namely, under certain conditions, KLR algebras are idempotent subalgebras of VV algebras. That is, there is an algebra isomorphism $\textbf{e}\vv\textbf{e} \cong \subklr$, for a certain $\tilde{\nu} \in \mathbb{N}I$ and an idempotent $\textbf{e} \in \vv$. 
\\
\\
Section \ref{chapter:MEs between KLR and VV} starts with why we may assume the dimension vector $\nu$ defining the VV algebra $\vv$ has connected support. 
\begin{theorem*} [\ref{theorem:ME in the separated case}]
Let $I,J$ be two $\mathbb{Z} \rtimes \mathbb{Z}_2$-orbits such that $I \cap J = \emptyset$. Then, for $\nu = \nu_1 + \nu_2 \in {^\theta}\mathbb{N}(I \cup J)$ with $\nu_1 \in {^\theta}\mathbb{N}I$ and $\nu_2 \in {^\theta}\mathbb{N}J$, $\vv$ and $\w_{\nu_1} \otimes_{\textbf{k}} \w_{\nu_2}$ are Morita equivalent.
\end{theorem*}
We then consider various classes of VV algebras and compare their module categories with module categories over KLR algebras. In particular, in the case $p,q \not\in I$ we find Morita equivalence between VV algebras and KLR algebras.
\begin{theorem*} [\ref{theorem:VV Morita equiv to KLR}]
In case (A1), the algebras $\vv$ and $\subklr$ are Morita equivalent.
\end{theorem*}
Section \ref{chapter:MEs between KLR and VV} also introduces the notions of affine cellularity, as defined by Koenig and Xi in \cite{KoenigXi}, and affine quasi-heredity as defined by Kleshchev in \cite{KleshchevAffineQH}. We then prove some statements about affine quasi-hereditary algebras and show, in the case $q \in I$, $p \not\in I$, when $\nu$ has multiplicity one, that the VV algebras are (graded) affine cellular and affine quasi-hereditary. To do this we prove the following result, where $A$ and $\tilde{A}$ are the path algebras of a given quiver.
\begin{theorem*} [\ref{theorem:ME in case q in I mult one}]
If $p$ not a root of unity and we are in the case (A2) with $\nu$ of multiplicity one, the algebras $\vv$ and $A^{\otimes (m-1)}\otimes \tilde{A}$ are Morita equivalent.
\end{theorem*}
At the end of Section \ref{chapter:MEs between KLR and VV} we show that when we relax this multiplicity condition imposed on $\nu$ we obtain the following Morita equivalence.
\begin{theorem*} [\ref{theorem:W and R x A are ME}]
In case (A2), when $q$ has multiplicity one in $\nu$, the algebras $\vv$ and $\subklr^+ \otimes_{\textbf{k}[z]} \tilde{A}$ are Morita equivalent.
\end{theorem*}
At the end of Section \ref{chapter:MEs between KLR and VV} we prove a Morita equivalence statement in the case $p \in I$, $q \not\in I$, $p$ not a root of unity, where $p$ has multiplicity exactly two in $\nu$.
\begin{theorem*} [\ref{theorem:ME when p in I}]
If $p$ is not a root of unity and we are in the case (A3), when $p$ has multiplicity two in $\nu$, the algebras $\vv$ and $\subklr^+ \otimes_{\textbf{k}[z]} \tilde{A}$ are Morita equivalent.
\end{theorem*}
\subsection*{Acknowledgements} This work makes up part of my PhD thesis which was completed at the University of East Anglia under the supervision of Dr Vanessa Miemietz. I would like to thank Vanessa for all her help, guidance and ideas throughout. I also thank the School of Mathematics and the University of East Anglia for the studentship and the financial support that came with it. This work was submitted whilst at the University of Stuttgart. I would like to thank Prof. Steffen Koenig for allowing me to complete this work in Stuttgart and for many helpful and interesting discussions.
\section{KLR Algebras and VV Algebras}  \label{chapter:KLR algebras and VV algebras}
\subsection{Preliminaries}
Throughout this work $\mathbf{k}$ will denote a field with characteristic not equal to 2. Any grading will always mean a $\mathbb{Z}$-grading. We start by introduce the notion of graded dimension for a graded module over a $\textbf{k}$-algebra $A$. We write $q$ to be both a formal variable and a degree shift functor, shifting the degree by 1. That is, for a graded $A$-module $M=\bigoplus_{n \in \mathbb{Z}} M_n$, $qM$ is again a graded $A$-module with $(qM)_k=M_{k-1}$. The graded $A$-module $M$ is said to be locally finite-dimensional if each graded component $M_k$ is finite-dimensional. In this case the graded dimension of $M$ is defined to be 
\begin{equation*}
\textrm{dim}_q M = \sum_{n \in \mathbb{Z}} (\textrm{dim}(M_n)) q^n
\end{equation*}
where dim$(M_n)$ is the dimension of $M_n$ over $\textbf{k}$.
\begin{example} \label{example:graded dim of poly. ring}
Let $\textbf{k}$ be a field and consider the polynomial ring $\textbf{k}[x]$. As a $\textbf{k}$-vector space, $\textbf{k}[x]$ has a basis $\{ 1,x,x^2,x^3, \ldots \}$. Each graded component is of dimension 1. Then, 
\begin{equation*}
\textrm{dim}_q\textbf{k}[x]=\sum_{n \in \mathbb{Z}_{\geq 0}} q^n=\frac{1}{1-q}.
\end{equation*}
\end{example}
\subsection{KLR Algebras} \label{klr}
In this section we define a family of algebras which were introduced by Khovanov, Lauda and independently by Rouquier. They are known as KLR algebras, or sometimes as quiver Hecke algebras.
\\
\\
Fix an element $p \in \mathbf{k}^{\times}$. Define an action of $\mathbb{Z}$ on $\mathbf{k}^{\times}$ as follows,
\begin{equation*}
n \cdot \lambda = p^{2n}\lambda.
\end{equation*}
Let $\tilde{I}$ be a $\mathbb{Z}$-orbit. So $\tilde{I}=\tilde{I}_\lambda$ is the $\mathbb{Z}$-orbit of $\lambda$,
\begin{equation*}
\tilde{I}=\tilde{I}_\lambda=\{ p^{2n}\lambda \mid n \in \mathbb{Z} \}.
\end{equation*}
To $\tilde{I}$ we associate a quiver $\tilde{\Gamma}=\tilde{\Gamma}_{\tilde{I}}$. The vertices of $\tilde{\Gamma}$ are the elements $i\in \tilde{I}$ and we have arrows $p^2i \longrightarrow i$ for every $i\in \tilde{I}$. We always assume that $\pm 1 \notin \tilde{I}$ and that $p \neq \pm 1$.
\\
\\
Now define $\mathbb{N}\tilde{I}=\{ \tilde{\nu} = \sum_{i\in \tilde{I}}\tilde{\nu}_i i \mid \tilde{\nu} \textrm{ has finite support} , \tilde{\nu}_i \in \mathbb{Z}_{\geq 0} \hspace{0.75em} \forall i \}$. Elements $\tilde{\nu} \in \mathbb{N}\tilde{I}$ are called \textbf{dimension vectors}. For $\tilde{\nu} \in \mathbb{N}\tilde{I}$, the \textbf{height} of $\tilde{\nu}$ is defined to be
\begin{equation*}
|\tilde{\nu}|=\sum_{i \in \tilde{I}}\tilde{\nu}_i.
\end{equation*}
For $\tilde{\nu} \in \mathbb{N}\tilde{I}$ with $|\tilde{\nu}|=m$, define
\begin{equation*}
\tilde{I}^{\tilde{\nu}}:=\{ \textbf{i}=(i_1, \ldots, i_m) \in \tilde{I}^m \mid \sum_{k=1}^m i_k = \tilde{\nu} \}.
\end{equation*}
\begin{defn}
$\tilde{\nu} = \sum_{i \in \tilde{I}} \tilde{\nu}_i i \in \mathbb{N}\tilde{I}$ is said to have \textbf{multiplicity one} if $\tilde{\nu}_i \leq 1$ for every $i \in \tilde{I}$. We say that $j \in \tilde{I}$ has multiplicity one in $\tilde{\nu}$, or $j$ appears with multiplicity one in $\tilde{\nu}$, if the coefficient of $j$ in $\tilde{\nu}$ is 1, i.e. if $\tilde{\nu}_j=1$.
\end{defn}
\begin{example}
$\tilde{\nu}_1=\lambda+p^2\lambda \in \mathbb{N}\tilde{I}_\lambda$ has multiplicity one, while $\tilde{\nu}_2=2\lambda+p^2\lambda$ does not have multiplicity one. In the latter example, $p^2\lambda$ appears with multiplicity one in $\tilde{\nu}_2$.
\end{example}
\begin{defn} \label{definition:klr algebra and relations}
For $\tilde{\nu}\in \mathbb{N}\tilde{I}$ with $|\tilde{\nu}|=m$ the \textbf{KLR algebra}, denoted by $\textbf{R}_{\tilde{\nu}}$, is the graded \textbf{k}-algebra generated by elements 
\begin{equation*}
\{ x_1, \ldots, x_m \} \cup \{ \sigma_1, \ldots, \sigma_{m-1} \} \cup \{ \idemp \mid \textbf{i} \in \tilde{I}^{\tilde{\nu}} \}
\end{equation*}
which are subject to the following relations. 
\begin{enumerate}
\item $\textbf{e}(\textbf{i})\textbf{e}(\textbf{j}) = \delta_{\textbf{ij}}\textbf{e}(\textbf{i})$, \hspace{4 pt} $\sigma_{k}\textbf{e}(\textbf{i}) = \textbf{e}(s_{k}\textbf{i})\sigma_{k}$, \hspace{4 pt} $x_{l}\textbf{e}(\textbf{i})=\textbf{e}(\textbf{i})x_{l}$, \hspace{4 pt} $\sum_{\textbf{i} \in \tilde{I}^{\tilde{\nu}}} \idemp = 1$.
\item The $x_l$'s commute.
\vspace*{0.5em}
\item $\sigma_{k}^{2}\textbf{e}(\textbf{i}) = \left\{
\begin{array}{l l l l}
    \textbf{e}(\textbf{i}) & \quad i_{k} \nleftrightarrow i_{k+1} \\
    (x_{k+1} - x_{k})\textbf{e}(\textbf{i}) & \quad i_{k} \leftarrow i_{k+1}\\
    (x_{k} - x_{k+1})\textbf{e}(\textbf{i}) & \quad i_{k} \rightarrow i_{k+1} \\
(x_{k+1}-x_k)(x_k-x_{k+1})\idemp & \quad i_k \leftrightarrow i_{k+1} \\    
    0 & \quad i_k=i_{k+1}
\end{array} \right. $\\ \\ \vspace*{0.5em}
$\sigma_{j}\sigma_{k}=\sigma_{k}\sigma_{j} \quad $ for $j \neq k\pm 1$ \\ \vspace*{0.5em}
$(\sigma_{k+1}\sigma_{k}\sigma_{k+1} - \sigma_{k}\sigma_{k+1}\sigma_{k})\idemp = \left\{
\begin{array}{l l l l}
    0 & \quad i_k \neq i_{k+2} \textrm{ or } i_k \nleftrightarrow i_{k+1} \\
    \idemp & \quad i_k=i_{k+2} \textrm{ and } i_{k} \rightarrow i_{k+1}\\
    -\idemp & \quad i_k=i_{k+2} \textrm{ and } i_{k} \leftarrow i_{k+1} \\
(2x_{k+1}-x_{k+2}-x_k)\idemp & \quad i_k=i_{k+2} \textrm{ and } i_k \leftrightarrow i_{k+1}.
\end{array} \right.$ \vspace*{0.5em}
\item $(\sigma_k x_l - x_{s_k(l)}\sigma_k)\idemp=\left\{
\begin{array}{l l l}
			-\idemp & \quad \textrm{ if } l=k, i_k=i_{k+1}\\
			\idemp & \quad \textrm{ if } l=k+1, i_k=i_{k+1}\\
			0 & \quad \textrm{ else.}
\end{array} \right. $\\
\end{enumerate}
\end{defn}
The grading on $\textbf{R}_{\tilde{\nu}}$ is given as follows.
\begin{equation*}
\begin{split}
&\textrm{deg}(\idemp)=0 \\
&\textrm{deg}(x_l \idemp)=2 \\
&\textrm{deg}(\sigma_k \idemp)=\left\{
\begin{array}{l l l}	
|i_k \rightarrow i_{k+1}|+|i_{k+1} \rightarrow i_k| & \quad \textrm{ if  } i_k \neq i_{k+1} \\
-2 & \quad \textrm{ if  } 	i_k = i_{k+1}	
\end{array} \right. 
\end{split}
\end{equation*}
where $|i_{k} \rightarrow i_{k+1}|$ denotes the number of arrows from $i_{k}$ to $i_{k+1}$ in the quiver $\tilde{\Gamma}$.
\\
\\
If $\tilde{\nu}=0$ we set $\textbf{R}_{\tilde{\nu}}=\textbf{k}$ as a graded $\textbf{k}$-algebra.
\begin{remark}
In this paper the underlying quiver $\tilde{\Gamma}$ for KLR algebras is always of type $A$ however, KLR algebras can be defined more generally.
\end{remark}
At this point we introduce some notation. Take any $\tilde{\nu} \in \mathbb{N}\tilde{I}$, of height $|\tilde{\nu}|=n$, and the associated KLR algebra $\subklr$. Whenever we work with the KLR algebra $\subklr$, for each $w \in \sn$, we must choose and fix a reduced expression of $w$, say $w=s_{i_1} \cdots s_{i_r}$ where $1 \leq i_k < n$, for all $k$. Then define $\sigma_{\dot{w}} \in \subklr$ as follows.
\begin{equation*}
\begin{split}
&\sigma_{\dot{w}}\idemp= \sigma_{i_1}\sigma_{i_2} \cdots \sigma_{i_r}\idemp \\
&\sigma_{\dot{1}}\idemp=\idemp
\end{split}
\end{equation*}
where $1$ denotes the identity element in $\sn$. Note that reduced expressions of $w$ are not always unique and so $\sigma_{\dot{w}}$ depends upon the choice of reduced expression of $w$. For example, $s_is_{i+1}s_i=s_{i+1}s_is_{i+1}$ in $\sn$, whereas the defining relations for the KLR algebra state that we do not always have $\sigma_i\sigma_{i+1}\sigma_i\idemp = \sigma_{i+1}\sigma_i\sigma_{i+1}\idemp$. In particular, this means that for elements $w_1,w_2 \in \sn$, $w_1=w_2$ does not necessarily imply $\sigma_{\dot{w_1}}\idemp=\sigma_{\dot{w_2}}\idemp$.
\begin{note}
If we have fixed a reduced expression for $w$ we will usually omit the dot and write $\sigma_w$ for $\sigma_{\dot{w}}$.
\end{note}
\begin{lemma}[Basis Theorem for KLR Algebras][\cite{DiagrammaticApproach}, Theorem 2.5] \label{lemma:KLR basis theorem}
Take $\tilde{\nu} \in \mathbb{N}\tilde{I}$ with $|\tilde{\nu}|=m$. The elements 
\begin{equation*}
\lbrace \sigma_{\dot{w}} x_1^{n_1}\cdots x_m^{n_m}\idemp \mid w \in \mathfrak{S}_m, \textbf{i} \in I^{\tilde{\nu}}, n_i \in \mathbb{N}_0 \textrm{ } \forall i\rbrace
\end{equation*}
form a \textbf{k}-basis for $\textbf{R}_{\tilde{\nu}}$.
\end{lemma}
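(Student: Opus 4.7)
The plan is the standard two-part argument for a PBW-type basis: first verify that the proposed elements span $\subklr$, and then construct a faithful representation to show they are linearly independent. The spanning argument is essentially direct manipulation using the defining relations, while the independence argument uses the \emph{polynomial representation} of $\subklr$.

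For spanning, I would fix an arbitrary monomial word in the generators $\idemp$, $x_l$, $\sigma_k$ and show it lies in the span of the claimed elements. Relations (1) and (2) let me collect idempotents and rearrange commuting $x$'s. Relation (4) expresses $\sigma_k x_l \idemp$ as $x_{s_k(l)}\sigma_k\idemp$ plus, in the two bad cases $l=k$ or $l=k+1$ with $i_k=i_{k+1}$, a term $\pm\idemp$ that contains no $\sigma_k$ at all; iterating this rule pushes every polynomial factor to the right of every crossing, at the cost of correction terms involving strictly shorter $\sigma$-words. The relations in (3) then reduce any word in the $\sigma_k$'s to a reduced one: $\sigma_k^2\idemp$ is always a polynomial in the $x$'s times $\idemp$, and the failure term $(\sigma_{k+1}\sigma_k\sigma_{k+1}-\sigma_k\sigma_{k+1}\sigma_k)\idemp$ is likewise a polynomial times $\idemp$. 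Inducting on the length of the underlying permutation, with a secondary induction on the total degree in the $x$'s, any product of generators rewrites as a $\textbf{k}$-linear combination of the monomials $\sigma_{\dot w}x_1^{n_1}\cdots x_m^{n_m}\idemp$.

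For linear independence, I would construct the polynomial representation
\begin{equation*}
P_{\tilde\nu} \;=\; \bigoplus_{\textbf{i}\in \tilde{I}^{\tilde\nu}} \textbf{k}[y_1,\ldots,y_m]\otimes \idemp,
\end{equation*}
on which $\idemp$ acts as projection onto its summand, $x_l$ acts as multiplication by $y_l$, and each $\sigma_k$ acts by an operator sending the $\textbf{i}$-summand to the $s_k\textbf{i}$-summand by an $\textbf{i}$-dependent formula: a Demazure-type divided difference when $i_k=i_{k+1}$, a bare permutation of variables when $i_k\nleftrightarrow i_{k+1}$, and a product of a linear polynomial in $y_k,y_{k+1}$ with the variable swap in the remaining arrow cases. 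Verifying that this assignment satisfies every defining relation is a case-by-case computation; the delicate case, and the main obstacle, is the cubic braid relation in (3) when $i_k=i_{k+2}$ and $i_k\leftrightarrow i_{k+1}$, where one must recover exactly $(2x_{k+1}-x_{k+2}-x_k)\idemp$ from the explicit operator expressions. Once $P_{\tilde\nu}$ is shown to be a well-defined $\subklr$-module, applying the element $\sigma_{\dot w}x_1^{n_1}\cdots x_m^{n_m}\idemp$ to the generator $1\otimes\idemp$ produces a polynomial sitting in the $w\textbf{i}$-summand, so different pairs $(w,\textbf{i})$ contribute to different summands, and within a single summand the tuples $(n_1,\ldots,n_m)$ are distinguished by the leading monomials with respect to lexicographic order. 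This forces any non-trivial linear combination of the proposed basis to act non-trivially on $P_{\tilde\nu}$, completing the proof.
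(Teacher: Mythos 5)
The paper gives no proof of this lemma at all --- it is quoted directly from Khovanov--Lauda --- so there is no internal argument to compare against; your outline follows the standard proof from that reference, and the spanning half (straightening via relations (1)--(4), with induction on the length of the permutation and on polynomial degree) is correct as described.

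There is, however, a genuine gap in the final step of your linear independence argument. Evaluating the operators on the single vector $1\otimes\idemp$ does not separate the proposed basis elements once $\textbf{i}$ has repeated entries, which is allowed here since $\tilde{\nu}$ need not have multiplicity one. Concretely, take $\tilde{\nu}=2\lambda$, $m=2$, $\textbf{i}=(\lambda,\lambda)$: then $s_1\textbf{i}=\textbf{i}$, so $w=1$ and $w=s_1$ land in the \emph{same} summand of $P_{\tilde{\nu}}$, contrary to your claim that distinct pairs $(w,\textbf{i})$ contribute to distinct summands; worse, $\sigma_1\idemp$ acts by a divided difference $f\mapsto (f-s_1f)/(y_2-y_1)$ (up to sign), which annihilates the constant polynomial, so $\sigma_1\idemp\cdot(1\otimes\idemp)=0$ and your evaluation map cannot distinguish $\sigma_1\idemp$ from $0$. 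The correct completion examines the action on the whole polynomial module rather than on one vector: one shows that, modulo operators indexed by $u<w$ in the Bruhat order, $\sigma_{\dot{w}}\idemp$ acts as a nonzero rational function times the variable-permutation operator $w$, and then invokes linear independence of the operators $\{w\mid w\in\mathfrak{S}_m\}$ over the fraction field of $\textbf{k}[y_1,\ldots,y_m]$ together with freeness over the polynomial part. With that replacement your argument becomes the one in the cited reference.
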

\addtocontents{toc}{\protect\setcounter{tocdepth}{1}}
\subsection{Root Partitions Associated to $\subklr$} \label{subsection:klr root partitions}
\addtocontents{toc}{\protect\setcounter{tocdepth}{2}}
To each $i\in \tilde{I}$ we associate an integer $n$; the power of $p$ at that vertex. For example, to $p^2\lambda$ we associate 2. To $p^{-4}\lambda$ we associate $-4$. If $p\in \tilde{I}$; to the vertex $p^{2k+1}$, $k \in \mathbb{Z}$ we associate $2k+1$. So $i \in \tilde{I}$ is identified with an integer and refers to the power of $p$ at that vertex. The order on $\tilde{I}$ is the natural order on $\mathbb{Z}$.
\\
\\
Associated to the quiver $\Gamma=\Gamma_{\tilde{I}}$, with vertex set $\tilde{I}$, is a Cartan matrix $(a_{ij})_{i,j \in \tilde{I}}$ defined by
\begin{equation*}
a_{i,j}:= \left\{
\begin{array}{l l l l}
2 & \quad \textrm{if } i=j \\
0 & \quad \textrm{if } i\nleftrightarrow j\\
-1 & \quad \textrm{if } i\rightarrow j \textrm{ or } i\leftarrow j\\
-2 & \quad \textrm{if } i\leftrightarrows j.
\end{array} \right. 
\end{equation*}
Let $(\mathfrak{h},\Pi,\Pi^\vee)$ be a realisation of this Cartan matrix. We have simple roots in type $A$ given by $\lbrace \alpha_i \mid i \in \tilde{I} \rbrace$, and positive roots $\alpha_{i,i+2k}:=\alpha_i+\alpha_{i+2}+\cdots + \alpha_{i+2k}$, $k \in \mathbb{Z}_{\geq 0}$. We order the roots as follows. 
\begin{equation*}
\alpha_i+\alpha_{i+2}+\cdots + \alpha_{i+2k} > \alpha_j+\alpha_{j+2}+\cdots + \alpha_{j+2l} \iff i>j \textrm{ or } i=j \textrm{ and } k>l.
\end{equation*}
\begin{defn}
A \textbf{root partition} of $\tilde{\nu}$ is a tuple of positive roots $(\beta_1, \beta_2, \beta_3, \ldots, \beta_r)$ such that $\tilde{\nu}
= \beta_1 + \beta_2 + \beta_3 + \cdots + \beta_r$ and such that $\beta_1 \geq \beta_2 \geq \beta_3 \geq \cdots \geq \beta_r$.
\end{defn}
A root partition of $\tilde{\nu}$ will often be denoted by $\pi$ and the set of all root partitions of $\tilde{\nu}$ will be denoted by $\Pi(\tilde{\nu})$.
\begin{example} 
Take $\tilde{\nu}=\lambda+p^2\lambda+p^4\lambda+p^6\lambda \in \mathbb{N}\tilde{I}_\lambda$. $(\lambda+p^2\lambda+p^4\lambda+p^6\lambda)$, $(p^6\lambda,\lambda+p^2\lambda+p^4\lambda)$, $(p^6\lambda,p^2\lambda+p^4\lambda, \lambda)$, are root partitions of $\tilde{\nu}$. Denote them as $\pi_1$, $\pi_2$, $\pi_3$, respectively. We have
\begin{equation*}
\pi_1 < \pi_2 < \pi_3
\end{equation*}
and $\textbf{i}_{\pi_1}=(\lambda, p^2\lambda, p^4\lambda, p^6\lambda)$, $\textbf{i}_{\pi_2}=(p^6\lambda,\lambda, p^2\lambda, p^4\lambda)$, $\textbf{i}_{\pi_3}=(p^6\lambda,p^2\lambda, p^4\lambda,\lambda)$.
\end{example}
Given $\tilde{\nu} \in \mathbb{N}\tilde{I}$ and the associated KLR algebra $\subklr$, we have a set $\{ \idemp \mid \textbf{i} \in \tilde{I}^{\tilde{\nu}} \}$ of idempotents. Each $\idemp$ is labelled by a sequence of integers $\textbf{i}=(i_1, \ldots, i_m) \in \tilde{I}^m$ which may or may not correspond to a root partition in the way we describe above. So, associated to each KLR algebra $\subklr$ we obtain a set of root partitions of $\tilde{\nu}$. In fact, since all permutations of $(i_1, \ldots, i_m)$ lie in $\tilde{I}^m$, we obtain a complete set of root partitions of $\tilde{\nu}$. If $\textbf{i}=(i_1, \ldots, i_m)$ corresponds to a root partition $\pi \in \Pi(\tilde{\nu})$ then we will sometimes write $\textbf{e}(\textbf{i}_\pi)=\idemp$ to emphasise this.
\subsection{VV Algebras} \label{section:vv definition}
In this section we define the family of algebras of which this paper is concerned; the VV algebras. They were introduced by Varagnolo and Vasserot in \cite{VaragnoloVasserot}.
\\
\\
Fix elements $p, q \in \mathbf{k}^{\times}$. Assume that $p$ is not a power of $q$ and that $q$ is not a power of $p$. Define an action of $\mathbb{Z} \rtimes \mathbb{Z}_{2}$ on $\mathbf{k}^{\times}$ as follows.
\begin{equation*}
(n,\varepsilon) \cdot \lambda = p^{2n}\lambda^{\varepsilon}.
\end{equation*}
Let $I$ be a $\mathbb{Z} \rtimes \mathbb{Z}_2$-orbit. So $I=I_\lambda$ is the $\mathbb{Z} \rtimes \mathbb{Z}_2$-orbit of $\lambda$.
\begin{equation*}
I=I_\lambda=\{ p^{2n}\lambda^{\pm 1} \mid n \in \mathbb{Z} \}.
\end{equation*}
To $I$ we associate a quiver $\Gamma=\Gamma_I$ together with an involution $\theta$. The vertices of $\Gamma$ are the elements $i\in I$ and we have arrows $p^{2}i \longrightarrow i$ for every $i\in I$. The involution $\theta$ is defined by 
\begin{equation*}
\begin{split}
\theta(i)&=i^{-1} \\
\theta(p^{2}i \longrightarrow i)&=p^{-2}i^{-1} \longleftarrow i^{-1}, \quad \textrm{ for all } i \in I .
\end{split}
\end{equation*}
We always assume that $\pm 1 \notin I$ and that $p \neq \pm 1$. This implies that $\theta$ has no fixed points and that $\Gamma$ has no loops (1-cycles). 
\\
\\
Now define ${^\theta}\mathbb{N}I:=\{ \nu = \sum_{i\in I}\nu_i i \mid \nu \textrm{ has finite support} , \nu_i \in \mathbb{Z}_{\geq 0}, \nu_i=\nu_{\theta(i)} \hspace{0.75em} \forall i \}$. In particular, for each $\nu \in {^\theta}\mathbb{N}I$, the coefficients of $i$ and $i^{-1}$ in $\nu$ must be equal. Elements $\nu \in {^\theta}\mathbb{N}I$ are called \textbf{dimension vectors}. For $\nu \in {^\theta}\mathbb{N}I$ the \textbf{height} of $\nu$ is defined to be,
\begin{equation*}
|\nu|=\sum_{i \in I}\nu_i.
\end{equation*}
The shape of $\Gamma$ depends on whether $p \in I$ or $p \not\in I$, as well as whether or not $p$ is a root of unity.
\begin{itemize}
\item Suppose $p \not\in I$. Let $I^+_\lambda:=\{ p^{2n}\lambda \mid n \in \mathbb{Z} \}$, $I^-_\lambda:=\{ p^{2n}\lambda^{-1} \mid n \in \mathbb{Z} \}$. So $I_\lambda=I^-_\lambda \sqcup I^+_\lambda$. 
\item Now suppose $p \in I$. Let $I^+_p:=\{ p^{2n+1} \mid n \in \mathbb{Z}_{\geq 0} \}$, $I^-_p:=\{ p^{2n-1} \mid n \in \mathbb{Z}_{\leq 0} \}$. So $I_p=I^-_p \sqcup I^+_p$, provided $p$ is not a root of unity in which case we have $I_p^-=I_p^+$.
\end{itemize}
Note that for $\nu \in {^\theta}\mathbb{N}I$,  $|\nu|=2m$ for some positive integer $m$. For $\nu \in {^\theta}\mathbb{N}I$ with $|\nu|=2m$, define
\begin{equation*}
{^\theta}I^\nu:=\{ \textbf{i}=(i_1, \ldots, i_m) \in I^m \mid \sum_{k=1}^m i_k + \sum_{k=1}^m i_k^{-1} = \nu \}.
\end{equation*}
\begin{defn}
A dimension vector $\nu = \sum_{i \in I} \nu_i i \in {^\theta}\mathbb{N}I$ is said to have \textbf{multiplicity one} if $\nu_i \leq 1$ for every $i \in I$. We say that $j \in I$ has multiplicity one in $\nu$, or $j$ appears with multiplicity one in $\nu$, if the coefficient of $j$ in $\nu$ is 1, i.e. if $\nu_j=1$.
\end{defn}
\begin{remark}
We remark here that, given this data, we can again define KLR algebras. That is, given a $\mathbb{Z} \rtimes \mathbb{Z}_2$-orbit $I_\lambda$ and associated quiver $\Gamma_{I_\lambda}$ we can pick $\tilde{\nu} \in \mathbb{N}I_\lambda$ which yields a KLR algebra $\subklr$. This KLR algebra is defined by the generators and relations given in \ref{klr} with the understanding that there are no arrows between vertices belonging to different branches of $\Gamma_{I_\lambda}$.
\end{remark}
\begin{defn} \label{definition:vv algebra}
For $\nu\in {^\theta}\mathbb{N}I$ with $|\nu|=2m$ the \textbf{VV algebra}, denoted by $\vv$, is the graded \textbf{k}-algebra generated by elements 
\begin{equation*}
\{ x_1, \ldots, x_m \} \cup \{ \sigma_1, \ldots, \sigma_{m-1} \} \cup \{ \idemp \mid \textbf{i} \in {^\theta}I^\nu \} \cup \{ \pi \}
\end{equation*}
which are subject to the relations defining the KLR algebras given in \ref{definition:klr algebra and relations} together with,
\begin{enumerate}
\item $\pi \textbf{e}(i_1,\ldots,i_m) = \textbf{e}(i_1^{-1},i_2 \ldots,i_m) \pi.$ \vspace*{0.5em}
\item $\pi^{2}\textbf{e}(\textbf{i})=\left\{
\begin{array}{l l l}
    x_1\idemp & \quad i_1=q \\
    -x_1\idemp & \quad i_1=q^{-1} \\
    \idemp & \quad i_1 \neq q^{\pm 1}.
\end{array} \right.$ \vspace*{0.5em}
\item $\pi x_{1} = -x_{1} \pi$ \\
$\pi x_{l} = x_{l} \pi$ \textrm{ for all } $l > 1.$
\item $(\sigma_1 \pi)^2\idemp - (\pi\sigma_1)^2\idemp= \left\{
\begin{array}{l l l}
0 & \quad i_1^{-1}\neq i_2 \textrm{ or if } i_1 \neq q^{\pm 1} \\
\sigma_1\idemp & \quad i_1^{-1}=i_2=q^{-1} \\
-\sigma_1\idemp & \quad i_1^{-1}=i_2=q
\end{array} \right. $ \\
$\pi \sigma_{k}=\sigma_{k} \pi \quad \forall k\neq 1.$
\end{enumerate}
\end{defn}
The grading on $\mathfrak{W}_{\nu}$ is defined as follows.
\begin{equation*}
\begin{split}
&\textrm{deg}(\idemp)=0 \\
&\textrm{deg}(x_l \idemp)=2 \\
&\textrm{deg}(\pi\idemp)=\left\{
\begin{array}{l l}	
1 & \quad \textrm{ if  } i_1=q^{\pm 1} \\
0 & \quad \textrm{ if  } i_1\neq q^{\pm 1}	
\end{array} \right. \\
&\textrm{deg}(\sigma_k \idemp)=\left\{
\begin{array}{l l l}	
|i_k \rightarrow i_{k+1}|+|i_{k+1} \rightarrow i_k| & \quad \textrm{ if  } i_k \neq i_{k+1} \\
-2 & \quad \textrm{ if  } 	i_k = i_{k+1}.	
\end{array} \right. 
\end{split}
\end{equation*}
where $|i_{k} \rightarrow i_{k+1}|$ denotes the number of arrows from $i_{k}$ to $i_{k+1}$ in the quiver $\Gamma$. 
\\
\\
If $\nu=0$ we set $\vv=\textbf{k}$ as a graded $\textbf{k}$-algebra.
\begin{remark} \label{definition:KLR^+ and KLR^-}
Every $\nu \in {^\theta}\mathbb{N}I$ can be written as
\begin{equation*}
\nu = \sum_{i \in I^+}\nu_ii + \sum_{i \in I^-}\nu_ii.
\end{equation*}
Setting $\tilde{\nu}= \sum_{i \in I^+}\nu_ii \in \mathbb{N}I^+$ defines a KLR algebra. Denote the KLR algebra associated to $\tilde{\nu} \in \mathbb{N}I^+$ by $\subklr^+$. For the remainder of this paper, for $\nu = \sum_{i \in I^+}\nu_ii + \sum_{i \in I^-}\nu_ii \in {^\theta}\mathbb{N}I$, $\tilde{\nu}$ will be used to denote $\sum_{i \in I^+}\nu_ii \in \mathbb{N}I^+$. Sometimes we write $\tilde{\nu}=\tilde{\nu}^+$ to make this explicit.
\\
\\
Similarly, setting $\tilde{\nu}^-=\sum_{i \in I^-}\nu_ii \in \mathbb{N}I^-$ also defines a KLR algebra. Denote the KLR algebra associated to $\tilde{\nu}^- \in \mathbb{N}I^-$ by $\subklr^-$. 
\end{remark}
The relations above, and therefore the algebras $\vv$, depend on the following four cases.
\begin{enumerate}
\item \textbf{The case $\textbf{p, q} \boldsymbol\not\boldsymbol\in \textbf{I}$}. In this setting $\Gamma_{I_\lambda}$ is of type $A^\infty_\infty \sqcup A^\infty_\infty$ if $p$ is not a root of unity. If $p^2$ is an $r^\textrm{th}$ primitive root of unity then $\Gamma_{I_\lambda}$ is of type $A^{(1)}_r \sqcup A^{(1)}_r$. Note that we do not allow $p^{2n+1}=1$ since $\pm 1 \not\in I$.
\item \textbf{The case $\textbf{p} \boldsymbol\in \textbf{I}$ and $\textbf{q} \boldsymbol\not\boldsymbol\in \textbf{I}$}. If $p\in I$ and $p$ is not a root of unity then $\Gamma_{I_p}$ has type $A^\infty_\infty$ and $\Gamma_{I_p}$ is of type $A^{(1)}_{r}$ if $p^2$ is an $\textrm{r}^{\textrm{th}}$ root of unity.
\item \textbf{The case $\textbf{q} \boldsymbol\in \textbf{I}$ and $\textbf{p} \boldsymbol\not\boldsymbol\in \textbf{I}$}. In this setting $\Gamma_{I_q}$ is of type $A^\infty_\infty \sqcup A^\infty_\infty$ if $p$ is not a root of unity. If $p$ is a  root of unity then $\Gamma_{I_q}$ is of type $A^{(1)}_{r}\sqcup A^{(1)}_{r}$.
\item \textbf{The case $\textbf{p}, \textbf{q} \boldsymbol\in \textbf{I}$}. In this case $I_p=I_q$. That is, 
\begin{equation*}
\{ p^{2n+1} \mid n \in \mathbb{Z} \}=\{ p^{2n}q^{\pm 1} \mid n \in \mathbb{Z} \}.
\end{equation*}
If $q=p^{2n}$, for some $n \in \mathbb{Z}$, then $\pm 1 \in I$, which we have ruled out. So we must have $q=p^{2n+1}$ for some $n \in \mathbb{Z}$.
\end{enumerate}
Here we introduce some notation similar to the notation used for KLR algebras. Take any $\nu \in {^\theta}\mathbb{N}I$, of height $|\nu|=2m$, and the associated VV algebra $\vv$. Whenever we work with the VV algebra $\vv$, for each $w \in W^B_m$, we must choose and fix a reduced expression, say $w=s_{i_1} \cdots s_{i_r}$ where $0 \leq i_k < m$, for all $k$. Then define $\sigma_{\dot{w}} \in \vv$ as follows.
\begin{equation*}
\begin{split}
&\sigma_{\dot{w}}\idemp= \sigma_{i_1}\sigma_{i_2} \cdots \sigma_{i_r}\idemp \\
&\sigma_{\dot{1}}\idemp=\idemp
\end{split}
\end{equation*}
where $1$ is the identity element in $W^B_m$. As with elements of the symmetric group, reduced expressions of $w \in W^B_m$ are not always unique and so $\sigma_{\dot{w}}$ depends upon the choice of reduced expression of $w$. 
\begin{note}
If we have fixed a reduced expression for $w \in W^B_m$ we will usually omit the dot and write $\sigma_w$ instead of $\sigma_{\dot{w}}$.
\end{note}
For each $\nu \in {^\theta}\mathbb{N}I$, with $|\nu|=2m$, define $^{\theta}\textbf{F}_{\nu}$ to be a polynomial ring in the $x_k$ at each $\idemp$. More precisely,
\begin{displaymath}
{^\theta}\textbf{F}_{\nu} := \bigoplus_{\textbf{i} \in {^\theta}I^{\nu}} \textbf{k}[x_{1}\idemp, \ldots, x_{m}\idemp].
\end{displaymath}
\begin{proposition}[\cite{VaragnoloVasserot}, Proposition 7.5]
The \textbf{k}-algebra $\vv$ is a free (left or right) ${^\theta}\textbf{F}_{\nu}$-module on basis $\{ \sigma_{\dot{w}} \mid w \in W^B_n \}$. It has rank $2^mm!$. The operator $\sigma_{\dot{w}}\idemp$ is homogeneous and its degree is independent of the choice of reduced expression of $\dot{w}$.
\end{proposition}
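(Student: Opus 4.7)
The proof splits naturally into three parts: showing the grading is well-defined (which implies the homogeneity and degree-independence claims), showing the set $\{\sigma_{\dot{w}}\idemp\}$ spans as a ${}^\theta\mathbf{F}_\nu$-module, and showing linear independence, the last being the real content.

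First I would verify that each defining relation in \ref{definition:klr algebra and relations} and in \ref{definition:vv algebra} is homogeneous with respect to the prescribed degrees. For the KLR relations this is already implicit in \ref{lemma:KLR basis theorem}; for the new relations involving $\pi$ one checks each case. For instance, in the relation $\pi^2\idemp = x_1\idemp$ when $i_1 = q$, the left side has degree $\deg(\pi\,\textbf{e}(\pi\textbf{i})) + \deg(\pi\idemp) = 1 + 1 = 2 = \deg(x_1\idemp)$; the mixed braid-type relation $(\sigma_1\pi)^2\idemp - (\pi\sigma_1)^2\idemp = \pm\sigma_1\idemp$ when $i_1^{-1} = i_2 = q^{\mp 1}$ is homogeneous because $\deg(\sigma_1\idemp)=-2$ in this case and $2\deg(\pi\idemp) + 2\deg(\sigma_1\idemp) = 2 + (-4) = -2$. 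Once every relation is homogeneous, $\vv$ is a graded algebra, so $\sigma_{\dot{w}}\idemp$ is homogeneous; moreover, any two reduced expressions for $w \in W^B_m$ differ by a sequence of braid moves, and since each corresponding algebra relation equates monomials of the same degree, the degree of $\sigma_{\dot{w}}\idemp$ does not depend on the chosen reduced expression.

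Next, for spanning, I would use the defining relations to straighten arbitrary words in the generators. Relation (4) of \ref{definition:klr algebra and relations} and relation (3) of \ref{definition:vv algebra} let one move any $x_k$ past any $\sigma_j$ or $\pi$ at the cost of terms with strictly fewer type-$B$ Weyl-group generators. After pushing all polynomial generators to the left, one is reduced to expressing a word in the $\sigma_j$ and $\pi$ as a ${}^\theta\mathbf{F}_\nu$-linear combination of the $\sigma_{\dot{w}}\idemp$. Using the quadratic relations for $\sigma_k^2$ and $\pi^2$ one can shorten any non-reduced expression, and using the type-$A$ and mixed type-$B$ braid relations one can replace one reduced expression by another, modulo shorter terms. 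A straightforward induction on the length of the Weyl group element, combined with the length induction for moving $x$'s, yields spanning.

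The main obstacle is linear independence, and here I would construct a faithful polynomial representation of $\vv$ on ${}^\theta\mathbf{F}_\nu = \bigoplus_{\textbf{i} \in {}^\theta I^\nu} \mathbf{k}[x_1,\dots,x_m]\idemp$, extending the familiar Khovanov--Lauda--Rouquier polynomial representation of the KLR subalgebra. The generators $x_k$ and $\idemp$ act by multiplication and by projection; the $\sigma_k$ act by Demazure-like divided-difference operators tailored to the cases $i_k \nleftrightarrow i_{k+1}$, $i_k \to i_{k+1}$, $i_k \leftarrow i_{k+1}$, $i_k \leftrightarrow i_{k+1}$, $i_k = i_{k+1}$; and $\pi$ acts by sending a polynomial on $\textbf{e}(i_1,i_2,\dots,i_m)$ to (a sign times) the same polynomial with $x_1 \mapsto -x_1$ on $\textbf{e}(i_1^{-1},i_2,\dots,i_m)$, with an extra factor of $x_1$ when $i_1 = q^{\pm 1}$. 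The non-trivial step is checking that this prescription satisfies all relations of $\vv$; the worst cases are the $\pi$-analogues of the type-$B$ braid relation $(\sigma_1\pi)^2 - (\pi\sigma_1)^2$, which require a careful computation on polynomials. Granted the representation, given any supposed linear dependence $\sum_{w,\textbf{i}} f_{w,\textbf{i}}(x)\sigma_{\dot{w}}\idemp = 0$, one applies both sides to $1 \in \mathbf{k}[x_1,\dots,x_m]\textbf{e}(\textbf{j})$ and reads off leading terms: the action of $\sigma_{\dot{w}}\idemp$ produces a polynomial supported on a unique component $\textbf{e}(w\cdot \textbf{j})$ (up to lower-order corrections), forcing each $f_{w,\textbf{i}} = 0$. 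Combined with $|W^B_m| = 2^m m!$, this gives freeness of rank $2^m m!$.
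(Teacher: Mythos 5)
This proposition is not proved in the paper at all: it is imported verbatim from Varagnolo--Vasserot (their Proposition 7.5), so there is no in-paper argument to compare against. Your outline -- check homogeneity of the relations, straighten words to get spanning over ${^\theta}\textbf{F}_\nu$, and prove linear independence by constructing a faithful polynomial representation extending the Khovanov--Lauda--Rouquier one by an operator for $\pi$ -- is precisely the strategy of the original source, and it is the right one.

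Two concrete slips in your sketch should be repaired, though neither affects the architecture. First, the action of $\pi$ as you literally state it (place the polynomial on the component $\textbf{e}(i_1^{-1},i_2,\ldots,i_m)$ after $x_1\mapsto -x_1$, ``with an extra factor of $x_1$ when $i_1=q^{\pm 1}$'') squares to multiplication by $-x_1^2$ on the $q^{\pm 1}$-components, not to $\pm x_1$ as the relation $\pi^2\textbf{e}(\textbf{i})=\pm x_1\textbf{e}(\textbf{i})$ demands; the degree-one factor must be inserted asymmetrically (e.g.\ multiply by $x_1$ only when passing from the component with $i_1=q^{-1}$ to the one with $i_1=q$, as in Varagnolo--Vasserot's normalisation), and one then has to check the mixed braid relation with this asymmetric formula -- this is exactly the ``non-trivial step'' you defer, so be aware your provisional formula is not yet the correct one. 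Second, your degree bookkeeping for the relation $(\sigma_1\pi)^2\textbf{e}(\textbf{i})-(\pi\sigma_1)^2\textbf{e}(\textbf{i})=\mp\sigma_1\textbf{e}(\textbf{i})$ in the case $i_1^{-1}=i_2=q^{\mp 1}$ is off: there $i_1\neq i_2$, so the right-hand side has degree $0$ (generically no arrows join $q$ and $q^{-1}$), not $-2$, and of the two $\sigma_1$'s inside $(\sigma_1\pi)^2$ one acts on an idempotent with equal first two entries (degree $-2$) while the other acts on $(q,q^{-1})$ or $(q^{-1},q)$ (degree $0$); both sides are homogeneous of degree $1-2+1+0=0$, so homogeneity does hold, but with different arithmetic than you wrote. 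With these corrections your plan reproduces the original proof: the faithful representation plus the leading-term argument indexed by $W^B_m$ gives freeness of rank $2^m m!$, and homogeneity of all relations gives independence of the degree of $\sigma_{\dot w}\textbf{e}(\textbf{i})$ from the chosen reduced expression.
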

That is, 
\begin{displaymath}
\vv = \bigoplus_{\lbrace \sigma_{\dot{w}} \mid w\in W^B_m \rbrace} {^{\theta}\mathbf{F}_{\nu}}.
\end{displaymath}
Then we have a $\textbf{k}$-basis for VV algebras, as follows.
\begin{lemma}[Basis Theorem for VV Algebras] \label{vv basis}
Take $\nu \in \mathbb{N}I$ with $|\nu|=2m$. The elements 
\begin{equation*}
\lbrace \sigma_{\dot{w}} x_1^{n_1}\cdots x_m^{n_m}\idemp \mid w\in W^B_m, \textbf{i} \in {^\theta}I^\nu, n_k \in \mathbb{N}_0 \textrm{ } \forall k\rbrace
\end{equation*}
form a \textbf{k}-basis for $\vv$.
\end{lemma}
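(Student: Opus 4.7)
The plan is to deduce the basis theorem as a direct corollary of the Varagnolo--Vasserot Proposition 7.5 quoted immediately above, combined with an explicit $\textbf{k}$-basis for the polynomial ring $^\theta\textbf{F}_\nu$. First I would note that the cited proposition gives a decomposition
\begin{equation*}
\vv = \bigoplus_{w \in W^B_m} \sigma_{\dot{w}}\cdot {^\theta\textbf{F}_\nu}
\end{equation*}
(using the right module structure; the left module statement is symmetric). So every element of $\vv$ can be written uniquely as $\sum_{w} \sigma_{\dot{w}} f_w$ with $f_w \in {^\theta\textbf{F}_\nu}$.

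Next I would unpack the definition of $^\theta\textbf{F}_\nu = \bigoplus_{\textbf{i}\in {^\theta}I^\nu}\textbf{k}[x_1\idemp,\ldots,x_m\idemp]$. Since this is just a direct sum, over the finite index set ${^\theta}I^\nu$, of ordinary polynomial rings in $m$ commuting variables, it has an obvious $\textbf{k}$-basis consisting of monomials
\begin{equation*}
\{\,x_1^{n_1}\cdots x_m^{n_m}\idemp \mid \textbf{i} \in {^\theta}I^\nu,\; n_k \in \mathbb{N}_0\,\}.
\end{equation*}
Commutativity of the $x_l$'s is guaranteed by relation~(2) of the KLR part of Definition~\ref{definition:vv algebra}, and the orthogonality $\idemp\textbf{e}(\textbf{j})=\delta_{\textbf{ij}}\idemp$ together with $x_l\idemp=\idemp x_l$ ensure these monomials are $\textbf{k}$-linearly independent.

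Substituting this basis of $^\theta\textbf{F}_\nu$ into the free-module decomposition yields that every element of $\vv$ has a unique expression as a $\textbf{k}$-linear combination of the elements
\begin{equation*}
\sigma_{\dot{w}} x_1^{n_1}\cdots x_m^{n_m}\idemp,\qquad w\in W^B_m,\; \textbf{i}\in {^\theta}I^\nu,\; n_k\in\mathbb{N}_0,
\end{equation*}
which is exactly the claim. The only mildly delicate point — and what I would treat as the main checkpoint rather than a real obstacle — is bookkeeping with the idempotents: one needs that $\sigma_{\dot w}\cdot(x_1^{n_1}\cdots x_m^{n_m}\idemp)$ is nonzero and indeed lies in a single summand (namely $\sigma_{\dot w}\textbf{e}(\dot w\cdot \textbf{i})$-worth of the decomposition), which follows from the commutation rule $\sigma_k\idemp=\textbf{e}(s_k\textbf{i})\sigma_k$ and its type-$B$ analogue $\pi\textbf{e}(i_1,\ldots,i_m)=\textbf{e}(i_1^{-1},i_2,\ldots,i_m)\pi$. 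Since these substitutions merely relabel $\textbf{i}$ by an element of the same $W^B_m$-orbit in ${^\theta}I^\nu$, the $\textbf{k}$-linear independence is preserved. Hence the listed set is both spanning and linearly independent, proving the lemma.
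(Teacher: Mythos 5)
Your deduction is correct and is essentially the paper's own route: the paper states the lemma as an immediate consequence of the quoted Varagnolo--Vasserot Proposition 7.5 (freeness of $\vv$ over ${^\theta}\textbf{F}_{\nu}$ on the $\sigma_{\dot{w}}$) combined with the monomial basis of ${^\theta}\textbf{F}_{\nu}$, which is exactly what you spell out. No further comment needed.
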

\addtocontents{toc}{\protect\setcounter{tocdepth}{1}}
\subsection{KLR Algebras as Idempotent Subalgebras}
\addtocontents{toc}{\protect\setcounter{tocdepth}{2}}
The following proposition provides more of an understanding of the relationship between KLR algebras and VV algebras.
\begin{proposition} \label{proposition:klr an idempotent subalgebra} For any $\nu \in {^\theta}\mathbb{N}I$ we can write $\nu=\sum_{i \in I} \nu_i i + \sum_{i \in I} \nu_i i^{-1}$ and set $\tilde{\nu}=\sum_{i \in I} \nu_i i$. If $i+i^{-1}$ is not a summand of $\tilde{\nu}$, for any $i\in I$, then $\tilde{\nu}$ yields an idempotent subalgebra $\textbf{R}_{\tilde{\nu}}$ of $\vv$.
\end{proposition}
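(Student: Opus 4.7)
The idea is to produce an explicit isomorphism $\phi \colon \mathbf{R}_{\tilde{\nu}} \xrightarrow{\sim} \mathbf{e}\vv\mathbf{e}$ for the idempotent
\[
\mathbf{e} \;:=\; \sum_{\mathbf{i} \in \tilde{I}^{\tilde{\nu}}} \mathbf{e}(\mathbf{i}) \;\in\; \vv,
\]
where we view each $\mathbf{i} = (i_1,\dots,i_m) \in \tilde{I}^{\tilde{\nu}}$ as an element of ${}^\theta I^{\nu}$ via the inclusion $\mathrm{supp}(\tilde{\nu}) \subset I$ (that is, we take exactly those $\mathbf{i} \in {}^\theta I^{\nu}$ whose entries all lie in $\mathrm{supp}(\tilde{\nu}) \subset I$). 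That $\mathbf{e}$ is an idempotent in $\vv$ follows at once from relation~(1) of Definition~\ref{definition:klr algebra and relations}.

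First I would define $\phi$ on generators by $\mathbf{e}(\mathbf{i}) \mapsto \mathbf{e}(\mathbf{i})$, $x_k \mapsto x_k \mathbf{e}$, $\sigma_k \mapsto \sigma_k \mathbf{e}$. Well-definedness of $\phi$ is immediate: comparing Definition~\ref{definition:klr algebra and relations} with Definition~\ref{definition:vv algebra}, one sees that the KLR defining relations are literally a subset of the VV defining relations, and so the images of the generators in $\vv$ automatically satisfy them. For injectivity I would invoke the two basis theorems: the KLR basis of Lemma~\ref{lemma:KLR basis theorem} for $\mathbf{R}_{\tilde{\nu}}$ maps, under $\phi$, to the subset
\[
\bigl\{ \sigma_{\dot w}\, x_1^{n_1}\cdots x_m^{n_m}\, \mathbf{e}(\mathbf{i}) \;\big|\; w \in \mathfrak{S}_m,\ \mathbf{i} \in \tilde{I}^{\tilde{\nu}},\ n_k \in \mathbb{N}_0 \bigr\}
\]
of the VV basis of Lemma~\ref{vv basis}, which is linearly independent in $\vv$ and a fortiori in $\mathbf{e}\vv\mathbf{e}$.

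The substantive point is surjectivity, and this is where the hypothesis on $\tilde{\nu}$ enters. By the VV basis theorem, $\mathbf{e}\vv\mathbf{e}$ is spanned by the elements $\mathbf{e}\cdot \sigma_{\dot w}\, x_1^{n_1}\cdots x_m^{n_m}\, \mathbf{e}(\mathbf{i})$ with $w \in W^B_m$ and $\mathbf{i} \in \tilde{I}^{\tilde{\nu}}$, so I must show that any such element with $w \notin \mathfrak{S}_m$ vanishes after left-multiplication by $\mathbf{e}$. Writing a reduced expression $w = s_{i_1}\cdots s_{i_r}$ with at least one $i_j = 0$, and using the VV relations $\sigma_k \mathbf{e}(\mathbf{j}) = \mathbf{e}(s_k \mathbf{j})\sigma_k$ (for $k \geq 1$) and $\pi\, \mathbf{e}(j_1,j_2,\dots,j_m) = \mathbf{e}(j_1^{-1},j_2,\dots,j_m)\,\pi$ (for $s_0 = \pi$), one slides the idempotent to the left and picks up
\[
\mathbf{e}\cdot \sigma_{\dot w}\, x_1^{n_1}\cdots x_m^{n_m}\, \mathbf{e}(\mathbf{i}) \;=\; \mathbf{e}\, \mathbf{e}(w\cdot \mathbf{i})\, \sigma_{\dot w}\, x_1^{n_1}\cdots x_m^{n_m}\, \mathbf{e}(\mathbf{i}),
\]
where $w\cdot \mathbf{i}$ is the signed permutation of $\mathbf{i}$ prescribed by $W^B_m$. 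Since $w$ involves $s_0$, at least one entry of $w\cdot \mathbf{i}$ is inverted, so $w\cdot \mathbf{i}$ contains some $i_k^{-1}$ with $i_k \in \mathrm{supp}(\tilde{\nu})$. By the standing hypothesis that $i+i^{-1}$ is never a summand of $\tilde{\nu}$, such an $i_k^{-1}$ does not lie in $\mathrm{supp}(\tilde{\nu})$, and hence $\mathbf{e}(w\cdot \mathbf{i})$ is not a summand of $\mathbf{e}$. Thus $\mathbf{e}\,\mathbf{e}(w\cdot \mathbf{i}) = 0$, and the whole term vanishes. This leaves only the summands with $w \in \mathfrak{S}_m$, which are precisely the image of $\phi$.

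The main obstacle is the bookkeeping in the surjectivity argument: one has to be careful that $\sigma_{\dot w}$ really does transport $\mathbf{e}(\mathbf{i})$ to a single $\mathbf{e}(w\cdot \mathbf{i})$ modulo lower-order terms (in the length filtration), and that those lower-order terms also vanish by the same idempotent obstruction. Once this commutation is established inductively on the length of $w$ using only relations~(1) of Definition~\ref{definition:klr algebra and relations} and relation~(1) of Definition~\ref{definition:vv algebra}, the proof is complete.
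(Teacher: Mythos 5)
Your proposal is correct and takes essentially the same approach as the paper: the same idempotent $\textbf{e}=\sum_{\textbf{i}\in \tilde{I}^{\tilde{\nu}}}\textbf{e}(\textbf{i})$, the same use of the two basis theorems to identify $\textbf{e}\vv\textbf{e}$ with $\textbf{R}_{\tilde{\nu}}$, and the same key vanishing observation that any basis element whose Weyl-group part lies outside $\mathfrak{S}_m$ is killed because a sign-changed tuple contains some $i^{-1}$ with $i$ in the support of $\tilde{\nu}$, which by hypothesis cannot index a summand of $\textbf{e}$ (the paper merely packages this as an induction over minimal-length coset representatives of $\mathfrak{S}_m$ in $W^B_m$). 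Your closing worry about lower-order terms is unfounded: the relations $\sigma_k\textbf{e}(\textbf{i})=\textbf{e}(s_k\textbf{i})\sigma_k$ and $\pi\,\textbf{e}(i_1,i_2,\ldots,i_m)=\textbf{e}(i_1^{-1},i_2,\ldots,i_m)\pi$ are exact, so $\sigma_{\dot{w}}\textbf{e}(\textbf{i})=\textbf{e}(w\cdot\textbf{i})\sigma_{\dot{w}}$ holds on the nose, with no correction terms in the length filtration.
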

\begin{proof}
We will show that there is an algebra isomorphism
\begin{equation*}
\textbf{e}\vv\textbf{e} \cong \subklr \textrm{ where } \textbf{e}=\sum_{\textbf{i} \in I^{\tilde{\nu}}} \idemp. 
\end{equation*} 
Let $\mathcal{D}_m:=\mathcal{D}(W^B_m/\sm)$ denote the minimal length left coset representatives of $\sm$ in $W^B_m$. Fix a reduced expression $\dot{s}$ for every $s\in \sm$. It is well-known (see \cite{GeckPfeiffer} Proposition (2.1.1), for example) that every $w \in W^B_m$ can be written uniquely in the form $\eta s$, for $\eta \in \dee$, $s \in \sm$, with $\ell(\eta s)=\ell(\eta)+\ell(s)$. For every $w \in W^B_m$ fix a reduced expression $\dot{w}=\dot{\eta}\dot{s}$. By the basis theorem for VV algebras \ref{vv basis}, any element $v \in \vv$ can be expressed in the following form. 
\begin{equation*}
v=\sum_{\substack {w \in W^B_m \\ \textbf{i} \in {^\theta}I^\nu}} \sigma_w p_\textbf{i}(\underline{x})\idemp=\sum_{\substack{s_{i_k} \in \sm \\ \eta \in \mathcal{D}_m \\ \textbf{i} \in {^\theta}I^\nu}} \sigma_\eta \sigma_{s_{i_1}\cdots s_{i_r}}p_\textbf{i}(\underline{x})\idemp
\end{equation*}
where $\underline{x}=(x_1, \ldots, x_m)$, and $p_\textbf{i}(\underline{x}) \in \textbf{k}[x_1\idemp, \ldots, x_m\idemp]$. Then 
\begin{equation*}
\textbf{e} v \textbf{e}=\sum_{\substack{s_{i_k}\in \sm \\ \eta \in \mathcal{D}_m \\ \textbf{i} \in I^{\tilde{\nu}}}} \textbf{e}\sigma_\eta \textbf{e}\sigma_{s_{i_1}\cdots s_{i_r}} p_\textbf{i}(\underline{x}).
\end{equation*}
\begin{claim}
For $\eta \in \mathcal{D}(W^B_m/\sm)$,
\begin{equation*}
\textbf{e}\sigma_\eta \textbf{e}=\left\{
\begin{array}{l l}
\textbf{e} & \textrm{if } \eta=1 \\
0 & \textrm{else.}
\end{array} \right.
\end{equation*}
\end{claim}
This is clear when $\eta = 1$. So suppose that $\eta \neq 1$. We prove by induction on $m$ that $\textbf{e}\sigma_\eta \textbf{e}=0$. \\
\\
For $m=1$ we have $\mathcal{D}_1 = \{ 1, s_0 \}$ and $\tilde{\nu}=a \in \mathbb{N}I$. Clearly $\textbf{e}\sigma_0\textbf{e}=\textbf{e}\pi\textbf{e}=0$. Now for any $\tilde{\nu} \in \mathbb{N}I$ of height $k<m$ assume  $\textbf{e}\sigma_\eta \textbf{e}=0$ for all $\eta \in \mathcal{D}_k$. 
\\
\\
Take $\tilde{\nu} \in \mathbb{N}I$ of height $m$ and $\eta \in \mathcal{D}_m \setminus \mathcal{D}_{m-1}$. Then, $\eta=\bar{\eta}s_{m-1}\cdots s_1s_0$, for $\bar{\eta} \in \mathcal{D}_{m-1}$. Consider one summand of $\textbf{e}$, say $e(a_1, \ldots, a_m)$. Then,
\begin{equation*}
\begin{split}
\textbf{e} \sigma_\eta e(a_1, \ldots, a_m)&=\textbf{e}\sigma_{\bar{\eta}}\sigma_{m-1}\cdots \sigma_1\pi e(a_1, \ldots, a_m) \\
&=\textbf{e}\sigma_{\bar{\eta}}e(a_2,\ldots,a_m,a_1^{-1})\sigma_{m-1}\cdots \sigma_1\pi.
\end{split}
\end{equation*}
Since $a_1$ is a summand of $\tilde{\nu}$, by assumption, $a_1^{-1}$ is not an entry of any $(i_1,\ldots, i_m) \in I^{\tilde{\nu}}$ (otherwise $a_1+a_1^{-1}$ would be a summand of $\tilde{\nu}$). Also, $\sigma_{\bar{\eta}}$ does not affect the entry $a_1^{-1}$, or its position, in $e(a_1, \ldots, a_m, a_1^{-1})$. Hence we have 
\begin{equation*}
\textbf{e}\sigma_{\bar{\eta}} e(a_2, \ldots, a_m, a_1^{-1})=0 \textrm{, and so } \textbf{e}\sigma_\eta e(a_1, \ldots, a_m)=0.
\end{equation*}
This is true for every summand of $\textbf{e}$ and so $\textbf{e}\sigma_{\eta}\textbf{e}=0$, for $\eta \neq 1$. 
\\
\\
Then,
\begin{equation*}
\textbf{e} v \textbf{e}= 
\left\lbrace
\begin{array}{ll}
\sum_{\substack{s \in \sm \\ \textbf{i} \in I^{\tilde{\nu}}}} \sigma_s p_\textbf{i}(\underline{x})\textbf{e}  & \textrm{ if } \eta =1 \\
0 & \textrm{ if } \eta \neq 1.
\end{array}
\right.
\end{equation*}
Now define a map
\begin{equation*}
\begin{split}
f: \textbf{e}\vv\textbf{e} &\longrightarrow \subklr \\
\textbf{e}v\textbf{e} &\mapsto \sum_{\substack{s \in \sm \\ \textbf{i} \in I^{\tilde{\nu}}}} \sigma_s p_\textbf{i}(\underline{x})\textbf{e}.
\end{split}
\end{equation*}
From \ref{lemma:KLR basis theorem} we know $\{ \sigma_s x_1^{n_1}\cdots x_m^{n_m} \idemp \mid s \in \sm, \textbf{i} \in I^{\tilde{\nu}}, n_k \in \mathbb{N}_0 \hspace{0.2em} \forall k \}$ is a basis for $\subklr$ and so $f$ is an isomorphism of $\textbf{k}$-vector spaces. On inspection of the defining relations of the KLR algebras and VV algebras it follows that $f$ is in fact a morphism of algebras. This completes the proof.
\end{proof}
\begin{remark} \label{remark:every VV has KLR+ as idemp subalg}
Using the notation from Remark \ref{definition:KLR^+ and KLR^-}, every VV algebra $\vv$ has idempotent subalgebras $\subklr^-$ and $\subklr^+$.
\end{remark}
\begin{example}
Take $\nu=p^{-2}\lambda^{-1} + \lambda^{-1} + \lambda + p^2\lambda \in {^\theta}\mathbb{N}I$. The algebra $\subklr^+$, corresponding to $\tilde{\nu}=\lambda+p^2\lambda$, is always an idempotent subalgebra of $\vv$, as is $\subklr^-$ which is the KLR algebra corresponding to $\tilde{\nu}= \lambda^{-1} +p^{-2}\lambda^{-1}$. But $\subklr$ is also an idempotent subalgebra for $\tilde{\nu} =\lambda^{-1}+p^2\lambda$ and for $\tilde{\nu} =\lambda+p^{-2}\lambda^{-1}$. In this example $\vv$ has four KLR algebras appearing as idempotent subalgebras.
\end{example}
\begin{example} 
Take $\nu = 2p^{-1}+2p \in {^\theta}\mathbb{N}I_p$. The algebra $\subklr$, for $\tilde{\nu}=p+p^{-1}$, is not an idempotent subalgebra of $\vv$. In this example, the only idempotent subalgebras isomorphic to KLR algebras are $\subklr^+$ and $\subklr^-$.
\end{example}
\section{Morita Equivalences Between KLR and VV Algebras} \label{chapter:MEs between KLR and VV}
\subsection{Morita Equivalence in the Separated Case} \label{section: vvlm ME to vv1 x vv2}
For a subset $I \subset \textbf{k}^\times$, let $H^B_m\textrm{-Mod}_I$ denote the category of modules over $H^B_m$ in which all eigenvalues of $X_i$, $1 \leq i \leq m$, lie in $I$. That is, $M \in H^B_m\textrm{-Mod}_I$ if and only if whenever $X_i n=\lambda n$ for some $n \in M$, $\lambda \in \textbf{k}$ and for some $1 \leq i \leq m$, then $\lambda \in I$. In this case we say that $M$ is of type $I$. We start this section by referencing the following lemma.
\begin{lemma}[\cite{EnomotoKashiwara}, Lemma 3.5] \leavevmode
\begin{enumerate}
\item Let $N'$ be a simple $\mathrm{H}^B_{n'}$-module of type $I$ and $N''$ a simple $\mathrm{H}^B_{n''}$-module of type J. Then $N'\boxtimes N''$ is a simple $\mathrm{H}^B_{n'} \otimes \mathrm{H}^B_{n''}$-module and $\mathrm{Ind}^{\mathrm{H}^B_{n'+n''}}_{\mathrm{H}^B_{n'} \otimes \mathrm{H}^B_{n''}}N'\boxtimes N''$ is a simple $\mathrm{H}^B_{n'+n''}$-module of type $I\cup J$.
\item Conversely if $M$ is a simple $\mathrm{H}^B_{m}$-module of type $I\cup J$ then there exists a simple $\mathrm{H}^B_{n}$-module $N'$ of type $I$ and a simple $\mathrm{H}^B_{m-n}$-module $N''$ of type $J$ such that $M \cong \mathrm{Ind}^{\mathrm{H}^B_m}_{\mathrm{H}^B_n \otimes \mathrm{H}^B_{m-n}}N'\boxtimes N''$.
\end{enumerate}
\end{lemma}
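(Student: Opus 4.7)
The plan is to carry out the standard separation-of-types argument for affine Hecke algebras, relying on Mackey theory and Frobenius reciprocity, with the key input being the disjointness hypothesis $I \cap J = \emptyset$ which is in force throughout this ``separated'' section.

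For part (1), I would first observe that $N' \boxtimes N''$ is simple as a module over the tensor product $\mathrm{H}^B_{n'} \otimes \mathrm{H}^B_{n''}$ by the usual Jacobson density argument: a nonzero submodule contains a pure tensor, and simplicity of each factor then forces it to be the whole module. To establish simplicity of $\mathrm{Ind}(N' \boxtimes N'')$ I would compute its endomorphism algebra via Frobenius reciprocity, which reduces to understanding the restriction of the induction back to $\mathrm{H}^B_{n'} \otimes \mathrm{H}^B_{n''}$. A Mackey-type filtration indexed by minimal-length double coset representatives of $W^B_{n'} \times W^B_{n''}$ in $W^B_{n'+n''}$ decomposes this restriction into subquotients that are $w$-twists of $N' \boxtimes N''$ for various $w \in W^B_{n'+n''}$. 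The trivial double coset contributes a one-dimensional piece of $\mathrm{Hom}$; for any nontrivial $w$ the $X$-eigenvalue labels of the twisted module mix entries of type $I$ and $J$ across the tensor factors, and since $I \cap J = \emptyset$ there can be no morphism from $N' \boxtimes N''$ into such a twist. Therefore the endomorphism algebra is one-dimensional, which forces simplicity, and the eigenvalue bookkeeping yields type $I \cup J$ directly.

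For part (2), I would choose any weight $\textbf{i}=(i_1,\ldots,i_m) \in (I \cup J)^m$ with $\textbf{e}(\textbf{i}) M \neq 0$, and then exploit invertibility of the appropriate intertwining operators $\tau_k$ between weight spaces $(\ldots, i_k, i_{k+1}, \ldots)$ and $(\ldots, i_{k+1}, i_k, \ldots)$ — guaranteed precisely because the types $I$ and $J$ are disjoint — to transport all $I$-entries to the left and all $J$-entries to the right. This produces a nonzero $\textbf{e}(\textbf{j}) M$ with $j_1,\ldots,j_n \in I$ and $j_{n+1},\ldots,j_m \in J$, which carries a natural $\mathrm{H}^B_n \otimes \mathrm{H}^B_{m-n}$-action; selecting a simple submodule $N' \boxtimes N''$ and applying Frobenius reciprocity produces a nonzero map $\mathrm{Ind}(N' \boxtimes N'') \to M$, which is surjective by simplicity of $M$ and injective because its source is simple by part (1).

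The main obstacle will be making the Mackey analysis in part (1) precise for the type $B$ affine Hecke algebra: one must identify the minimal-length double coset representatives in $W^B_m$ relative to parabolic subgroups of the form $W^B_a \times \mathfrak{S}_b$, track how the sign reflection $s_0$ permutes and inverts the polynomial generators $X_i$, and confirm that these inversions are absorbed by the $\mathbb{Z}_2$-invariance of $I$ and $J$ as $\mathbb{Z} \rtimes \mathbb{Z}_2$-orbits (so that the ``conjugate types'' produced by Weyl elements still lie in $I$, respectively $J$). Once this combinatorial bookkeeping is settled, the remaining steps — the density argument, the intertwiner manipulations in part (2), and the final application of Frobenius reciprocity — are routine.
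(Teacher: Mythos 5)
You should first be aware that the paper contains no proof of this statement: it is imported verbatim from Enomoto--Kashiwara (\cite{EnomotoKashiwara}, Lemma 3.5) and serves only as motivation for the Morita-theoretic analogue proved later (Corollary \ref{theorem:ME in the separated case}), whose proof is of a completely different nature -- an explicit algebra isomorphism $\mathfrak{W}_{\nu_1}\otimes_{\mathbf{k}}\mathfrak{W}_{\nu_2}\cong \mathbf{e}\mathfrak{W}_{\nu}\mathbf{e}$ together with fullness of the idempotent $\mathbf{e}$. So your proposal has to be judged as a reconstruction of the cited Hecke-algebra argument, not against anything in this paper.

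As such a reconstruction it has one genuine logical gap, in part (1): you conclude simplicity of $\mathrm{Ind}(N'\boxtimes N'')$ from the one-dimensionality of its endomorphism algebra. That inference is false in general -- an indecomposable module of length two with non-isomorphic composition factors has endomorphism algebra $\mathbf{k}$, and precisely this happens for induced modules of affine Hecke algebras when eigenvalues are linked -- so the Mackey/Frobenius computation alone only bounds $\mathrm{End}$ and does not rule out a proper socle. What actually drives the result is the disjointness hypothesis applied pointwise to submodules: every nonzero submodule $K$ is stable under the commutative subalgebra, hence has a nonzero generalized weight vector, and because $I\cap J=\emptyset$ the intertwining operators between the relevant weight components are invertible, so that vector can be transported into the ``separated'' component (first $n'$ eigenvalues in $I$, last $n''$ in $J$), which is exactly $1\otimes(N'\boxtimes N'')$; simplicity of $N'\boxtimes N''$ over the subalgebra then gives $K\supseteq 1\otimes(N'\boxtimes N'')$, which generates the induced module. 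In other words, the intertwiner mechanism you reserve for part (2) is also the missing engine for part (1). A secondary caution: $\mathrm{H}^B_{n'}\otimes\mathrm{H}^B_{n''}$ is not a standard parabolic subalgebra -- its second ``$T_0$'' corresponds to the reflection $s_{n'}\cdots s_1s_0s_1\cdots s_{n'}$, exactly the quasi-parabolic subgroup appearing in Lemma \ref{lemma: group isomorphism} -- so the double-coset filtration you invoke must be established for this subgroup rather than quoted off the shelf; you flag this, but it is where most of the real work lies.
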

In \cite{EnomotoKashiwara} they conclude that it suffices to study $\mathrm{H}^B_m$-modules of type $I$. In this section we provide more of a categorical justification of this fact.
\\
\\
Recall, from Section \ref{section:vv definition}, we fixed an element $p \in \textbf{k}^\times$ in order to define an action of $\mathbb{Z} \rtimes \mathbb{Z}_2$ on $\textbf{k}^\times$. We then fixed a $\mathbb{Z} \rtimes \mathbb{Z}_2$-invariant subset $I_\lambda$ of $\textbf{k}^\times$, for some $\lambda \in \textbf{k}^\times$, and defined a family of VV algebras from this data. In this section we will define families of VV algebras in a slightly more general setting and then show that in fact it suffices to study the families of VV algebras as defined in Section \ref{section:vv definition}. We again fix elements $p,q \in \textbf{k}^\times$ and keep the action of $\mathbb{Z} \rtimes \mathbb{Z}_2$ on $\textbf{k}^\times$ as before, namely,
\begin{equation*}
(n,\varepsilon) \cdot \lambda = p^{2n}\lambda^{\varepsilon}.
\end{equation*}
Let $I,J \subset \bk^\times$ be $\mathbb{Z}\rtimes \mathbb{Z}_2$-orbits with $I \cap J = \emptyset$. To $I \cup J$ we associate a quiver $\Gamma = \Gamma_{I \cup J}$ together with an involution $\theta$. The vertices of $\Gamma$ are the elements $i \in I \cup J$ and we have arrows $p^2i \longrightarrow i$ for every $i \in I \cup J$. The involution $\theta$ is defined as before;
\begin{equation*}
\begin{split}
\theta(i)&=i^{-1} \\
\theta(p^{2}i \longrightarrow i)&=p^{-2}i^{-1} \longleftarrow i^{-1} \quad \textrm{ for all } i \in I \cup J.
\end{split}
\end{equation*}
Define ${^\theta}\mathbb{N}(I \cup J):=\{ \nu = \sum_{i\in I \cup J}\nu_i i \mid \nu \textrm{ has finite support} , \nu_i \in \mathbb{Z}_{\geq 0}, \nu_i=\nu_{\theta(i)} \hspace{0.75em} \forall i \}$. For $\nu \in {^\theta}\mathbb{N}(I \cup J)$, the height of $\nu$ is defined to be
\begin{equation*}
|\nu|=\sum_{i \in I}\nu_i
\end{equation*}
and is equal to $2m$, for some positive integer $m$. For $\nu \in {^\theta}\mathbb{N}(I \cup J)$ of height $2m$, define
\begin{equation*}
{^\theta}(I \cup J)^\nu:=\{ \textbf{i}=(i_1, \ldots, i_m) \in (I \cup J)^m \mid \sum_{k=1}^m i_k + \sum_{k=1}^m i_k^{-1} = \nu \}.
\end{equation*} 
\begin{defn} \label{definition:separated vv algebra}
For $\nu\in {^\theta}\mathbb{N}(I \cup J)$ with $|\nu|=2m$ the \textbf{separated VV algebra}, denoted $\vv$, is the graded \textbf{k}-algebra generated by elements 
\begin{equation*}
\{ x_1, \ldots, x_m \} \cup \{ \sigma_1, \ldots, \sigma_{m-1} \} \cup \{ \idemp \mid \textbf{i} \in {^\theta}(I \cup J)^\nu \} \cup \{ \pi \}
\end{equation*}
which are subject to the relations given in Definition \ref{definition:vv algebra} with the understanding that there are no arrows between between vertices coming from $I$ and those coming from $J$.
\end{defn}
The grading on $\mathfrak{W}_{\nu}$ is defined as follows.
\begin{equation*}
\begin{split}
&\textrm{deg}(\idemp)=0 \\
&\textrm{deg}(x_l \idemp)=2 \\
&\textrm{deg}(\pi\idemp)=\left\{
\begin{array}{l l}	
1 & \quad \textrm{ if  } i_1=q^{\pm 1} \\
0 & \quad \textrm{ if  } i_1\neq q^{\pm 1}	
\end{array} \right. \\
&\textrm{deg}(\sigma_k \idemp)=\left\{
\begin{array}{l l l}	
|i_k \rightarrow i_{k+1}|+|i_{k+1} \rightarrow i_k| & \quad \textrm{ if  } i_k \neq i_{k+1} \\
-2 & \quad \textrm{ if  } 	i_k = i_{k+1}	
\end{array} \right. 
\end{split}
\end{equation*}
where $|i_{k} \rightarrow i_{k+1}|$ denotes the number of arrows from $i_{k}$ to $i_{k+1}$ in the quiver $\Gamma_{I \cup J}$. 
\\
\\
If $\nu=0$ we set $\vv=\textbf{k}$ as a graded $\textbf{k}$-algebra.
\begin{remark} \label{remark:sigma squares to 1 in separated VV}
Take $\nu \in {^\theta}\mathbb{N}(I \cup J)$ and the separated VV algebra $\vv$. We emphasise here that there are no arrows between any $i \in I$ and $j \in J$. So, for some $\idemp$ with $i_k=i$ and $i_{k+1}=j$, it is always the case that $\sigma_k^2 \idemp=\idemp$.
\end{remark}
\begin{note} \label{note:notation in the separated case}
Fix $\nu \in {^\theta}\mathbb{N}(I \cup J)$. We can write $\nu = \nu_1 + \nu_2$, for $\nu_1 \in {^\theta}\mathbb{N}I$ and $\nu_2 \in {^\theta}\mathbb{N}J$. We note here that $(i_1, \ldots, i_{m_1}, j_1, \ldots, j_{m_2}) \in {^\theta}(I \cup J)^\nu$ for any $(i_1, \ldots, i_{m_1}) \in {^\theta}I^{\nu_1}$ and any $(j_1, \ldots, j_{m_2}) \in {^\theta}J^{\nu_2}$. We will write $(\textbf{i}\textbf{j})$ for such a tuple, where $\textbf{i}=(i_1, \ldots, i_{m_1}) \in {^\theta}I^{\nu_1}$ and $\textbf{j}=(j_1, \ldots, j_{m_2}) \in {^\theta}J^{\nu_2}$. 
\end{note}
Let $\textbf{e}=\sum_{\substack{\textbf{i}\in {^\theta}I^{\nu_1} \\ \textbf{j}\in {^\theta}J^{\nu_2}}}\idempij$. In this section we will see that $\vv\textbf{e}$ is a progenerator in $\vv$-Mod such that $\textbf{e}\vv\textbf{e} \cong \w_{\nu_1} \otimes_{\textbf{k}} \w_{\nu_2}$, from which it follows that $\vv$ and $\w_{\nu_1} \otimes_{\textbf{k}} \w_{\nu_2}$ are Morita equivalent.
\\
\\
From here until the end of this section we fix the following notation. Let $I,J \subset \bk^\times$ be $\mathbb{Z}\rtimes \{ \pm 1 \}$-orbits with $I \cap J = \emptyset$. Take $\nu \in {^\theta}\mathbb{N}(I \cup J)$ and write $\nu = \nu_1 + \nu_2$, for $\nu_1 \in {^\theta}\mathbb{N}I$ with $|\nu_1|=m_1$ and $\nu_2 \in {^\theta}\mathbb{N}J$ with $|\nu_2|=m_2$. Let $m=m_1+m_2$. Set $\textbf{e}:= \sum_{\substack{\textbf{i} \in {^\theta}I^{\nu_1} \\ \textbf{j}\in {^\theta}J^{\nu_2}}}\idempij$, using the notation as in Note \ref{note:notation in the separated case}.
\\
\\
Let $Q:=\big\langle s_0, s_1, \ldots, s_{m_1-1}, s_{m_1}\cdots s_1s_0s_1\cdots s_{m_1}, s_{m_1+1}, \ldots, s_{m_1+m_2-1} \big\rangle \subset W^B_m$, a quasi-parabolic subgroup of $W^B_m$.
\begin{lemma} \label{lemma: group isomorphism}
There is a group isomorphism $W^B_{m_1} \times W^B_{m_2} \cong Q$.
\end{lemma}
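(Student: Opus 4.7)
The plan is to realise $W^B_m$ concretely as the group of signed permutations of $\{1,2,\ldots,m\}$, where $s_0$ acts by negating the first coordinate and $s_i$ for $i\geq 1$ transposes the $i$th and $(i{+}1)$st coordinates. Under this description, a direct computation shows that the element
\[
\tilde{s}_0 := s_{m_1}s_{m_1-1}\cdots s_1 s_0 s_1 \cdots s_{m_1-1} s_{m_1}
\]
is precisely the signed permutation which negates the $(m_1{+}1)$st coordinate and fixes everything else. This is the key computation, and it follows by inductively conjugating $s_0$ past the adjacent transpositions $s_1,\ldots,s_{m_1}$.

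Next I would introduce the two subgroups
\[
H_1 := \langle s_0,s_1,\ldots,s_{m_1-1}\rangle, \qquad H_2 := \langle \tilde{s}_0, s_{m_1+1},\ldots,s_{m_1+m_2-1}\rangle,
\]
so that $Q = H_1 H_2$ by construction. From the signed-permutation picture, every element of $H_1$ acts as a signed permutation of the first $m_1$ coordinates and fixes coordinates $m_1{+}1,\ldots,m$, whereas every element of $H_2$ acts as a signed permutation of the last $m_2$ coordinates and fixes the first $m_1$. Hence $H_1$ and $H_2$ commute elementwise inside $W^B_m$, and $H_1 \cap H_2 = \{1\}$.

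The next step is to identify $H_1$ and $H_2$ as Weyl groups of type $B$. For $H_1$ this is immediate: the generators $s_0,s_1,\ldots,s_{m_1-1}$ in $W^B_m$ satisfy the defining Coxeter relations of $W^B_{m_1}$, and acting on the first $m_1$ coordinates they generate its full signed-permutation realisation, giving $H_1 \cong W^B_{m_1}$. For $H_2$ one defines a map $W^B_{m_2} \to H_2$ by sending the standard generators $t_0,t_1,\ldots,t_{m_2-1}$ to $\tilde{s}_0, s_{m_1+1},\ldots,s_{m_1+m_2-1}$ respectively, and checks the Coxeter relations. The relations among $s_{m_1+1},\ldots,s_{m_1+m_2-1}$ are type-$A$ and inherited from $W^B_m$. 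The element $\tilde{s}_0$ is an involution (being a conjugate of $s_0$), it commutes with $s_{m_1+j}$ for $j\geq 2$ (their supports on coordinates $\{m_1+1\}$ and $\{m_1+j,m_1+j+1\}$ are disjoint), and the braid relation $(\tilde{s}_0 s_{m_1+1})^4 = 1$ follows because the two signed permutations ``negate coordinate $m_1{+}1$'' and ``swap coordinates $m_1{+}1, m_1{+}2$'' generate the standard type-$B_2$ Weyl group on those two coordinates. These are exactly the defining relations of $W^B_{m_2}$, so the map is a well-defined surjection; the signed-permutation action on the last $m_2$ coordinates shows it is also injective.

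Combining these facts, the natural map $W^B_{m_1} \times W^B_{m_2} \to Q$, $(a,b) \mapsto \iota_1(a)\iota_2(b)$, where $\iota_1,\iota_2$ are the isomorphisms above, is a group homomorphism (because $H_1$ and $H_2$ commute), surjective (because $Q = H_1 H_2$), and injective (because $H_1 \cap H_2 = \{1\}$). The only mildly delicate point is the braid relation for $\tilde{s}_0$ and $s_{m_1+1}$, but this becomes a two-line check in the signed-permutation model; everything else is bookkeeping.
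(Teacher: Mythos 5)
Your proof is correct, and while it ends up constructing the same map as the paper (standard generators to standard generators, with the type-$B$ generator of the second factor sent to the conjugate $s_{m_1}\cdots s_1s_0s_1\cdots s_{m_1}$), the way you establish that it is an isomorphism is genuinely different. The paper defines $\phi$ on generators, asserts that the defining relations can be checked, notes surjectivity onto the generators of $Q$, and then gets injectivity from the cardinality identity $|W^B_{m_1}\times W^B_{m_2}|=|Q|$ --- an identity it does not itself justify. You instead pass to the signed-permutation realisation of $W^B_m$, identify $\tilde{s}_0$ as the sign change in coordinate $m_1+1$, and exhibit $Q$ as an internal direct product $H_1H_2$ of two elementwise-commuting subgroups with trivial intersection, each shown to be a full hyperoctahedral group on its block of coordinates. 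This buys you two things the paper leaves implicit: the verification of the Coxeter relations (in particular the order-$4$ relation between $\tilde{s}_0$ and $s_{m_1+1}$, which is a two-line check in the concrete model) and the order count $|Q|=2^{m_1}m_1!\cdot 2^{m_2}m_2!$, which is exactly what the paper's injectivity argument presupposes. So your argument is more self-contained, at the cost of fixing a concrete realisation of $W^B_m$ rather than working purely with the presentation.
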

\begin{proof}
Let
\begin{equation*}
\phi: W^B_{m_1} \times W^B_{m_2} \longrightarrow Q
\end{equation*}
be the group homomorphism defined on generators as follows.
\begin{equation*}
\begin{split}
(e_1, e_2) &\mapsto e \\
(s_i, e_2) &\mapsto s_i \quad 0\leq i \leq m_1-1 \\
(e_1, s_j) &\mapsto s_{j+m_1} \quad 1\leq j \leq m_2-1 \\
(e_1, s_0) &\mapsto s_{m_1}\cdots s_1s_0s_1 \cdots s_{m_1}.
\end{split}
\end{equation*}
One can check that $\phi$ really is a well-defined group homomorphism by checking the relations. In particular,
\begin{equation*}
\phi\big( (e_1,s_0s_1s_0s_1) \big)=\phi\big( (e_1,s_1s_0s_1s_0) \big).
\end{equation*}
The homomorphism $\phi$ is surjective since it is surjective on the generators of $Q$. Then, since $|W^B_{m_1} \times W^B_{m_2}|=|Q|$, it follows that $\phi$ is bijective and we have a group isomorphism $W^B_{m_1} \times W^B_{m_2} \cong Q$.
\end{proof}
\begin{proposition} \label{proposition:w tensor w iso to ewe}
There is a \textbf{k}-algebra isomorphism $\w_{\nu_1} \otimes_{\textbf{k}} \w_{\nu_2} \cong \textbf{e}\vv\textbf{e}$.
\end{proposition}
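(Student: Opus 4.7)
The plan is to construct an explicit $\textbf{k}$-algebra homomorphism $\phi\colon \w_{\nu_1}\otimes_{\textbf{k}}\w_{\nu_2}\to\textbf{e}\vv\textbf{e}$ lifting the group isomorphism of Lemma~\ref{lemma: group isomorphism}, and then to establish bijectivity using the basis theorems. On generators I would define
\[
\textbf{e}(\textbf{i})\otimes\textbf{e}(\textbf{j})\mapsto \textbf{e}(\textbf{ij}),\quad x_k\otimes 1\mapsto x_k\textbf{e},\quad 1\otimes x_k\mapsto x_{k+m_1}\textbf{e},
\]
\[
\sigma_k\otimes 1\mapsto \sigma_k\textbf{e},\quad 1\otimes\sigma_k\mapsto \sigma_{k+m_1}\textbf{e},\quad \pi\otimes 1\mapsto\pi\textbf{e},
\]
and, crucially, $1\otimes\pi\mapsto \sigma_{m_1}\cdots\sigma_1\pi\sigma_1\cdots\sigma_{m_1}\textbf{e}$, which is the natural algebra-level lift of the generator $(e_1,s_0)$ under the isomorphism $W^B_{m_1}\times W^B_{m_2}\cong Q$.

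Next I would verify that $\phi$ respects all defining relations of the tensor product algebra. Relations entirely within the first tensor factor transport immediately, since the idempotents indexed by $\textbf{i}\in {^\theta}I^{\nu_1}$ together with $x_1,\ldots,x_{m_1},\sigma_1,\ldots,\sigma_{m_1-1},\pi$ already satisfy the VV relations inside $\vv$. Relations internal to the second tensor factor that do not involve $1\otimes\pi$ follow similarly by a shift of indices by $m_1$. The cross relations expressing commutativity between the two factors reduce, after pulling $x_k\textbf{e}$ or $\sigma_k\textbf{e}$ past $x_l\textbf{e}$ or $\sigma_l\textbf{e}$ for $|k-l|>1$, to situations where a $\sigma$ straddles an $I$-labelled position and a $J$-labelled position; by Remark~\ref{remark:sigma squares to 1 in separated VV} such a $\sigma$ squares to its idempotent, and the braid-like contributions collapse.

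The main obstacle is verifying the relations involving $\phi(1\otimes\pi)=\sigma_{m_1}\cdots\sigma_1\pi\sigma_1\cdots\sigma_{m_1}\textbf{e}$. One must check that $\phi(1\otimes\pi)^2$ evaluates, on each idempotent $\textbf{e}(\textbf{ij})$, to the value dictated by whether $j_1=q^{\pm 1}$; that $\phi(1\otimes\pi)$ commutes with $x_l\textbf{e}$ and $\sigma_k\textbf{e}$ for first-factor indices $l,k$, and anti-commutes with $x_{m_1+1}\textbf{e}$; and that the analogue of relation~(4) of Definition~\ref{definition:vv algebra} between $\pi$ and $\sigma_1$ survives the shift. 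Each of these follows by a telescoping argument: using $\sigma_k\textbf{e}(\textbf{k})=\textbf{e}(s_k\textbf{k})\sigma_k$ to migrate the outer $\sigma$'s through intermediate idempotents, any occurrence of $\sigma_k^2$ straddling an $I$-labelled and a $J$-labelled position collapses to the idempotent by Remark~\ref{remark:sigma squares to 1 in separated VV}, reducing everything to relations entirely within a single block, where the original VV relations apply directly.

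Finally, I would deduce bijectivity by comparing bases. By Lemma~\ref{vv basis}, every element of $\textbf{e}\vv\textbf{e}$ is a $\textbf{k}$-linear combination of terms $\textbf{e}\,\sigma_{\dot w}x_1^{n_1}\cdots x_m^{n_m}\textbf{e}(\textbf{k})$, and non-vanishing requires both $\textbf{e}(\textbf{k})\textbf{e}\neq 0$ and $\textbf{e}(w\cdot\textbf{k})\textbf{e}\neq 0$. Since $I\cap J=\emptyset$, both conditions together force $\textbf{k}=(\textbf{ij})$ and $w$ to preserve the block decomposition of $\{1,\ldots,m\}$ into $\{1,\ldots,m_1\}$ and $\{m_1+1,\ldots,m\}$, i.e.\ $w\in Q$. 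Combining Lemma~\ref{lemma: group isomorphism} with compatible reduced expressions for elements of $Q$ written as products of first- and second-factor generators, the resulting basis of $\textbf{e}\vv\textbf{e}$ is in natural bijection with the tensor product basis of $\w_{\nu_1}\otimes_{\textbf{k}}\w_{\nu_2}$ supplied by applying Lemma~\ref{vv basis} to each factor, and $\phi$ visibly maps the latter basis to the former, yielding the claimed isomorphism.
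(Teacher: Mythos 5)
Your proposal is correct and follows essentially the same route as the paper: the same map on generators (including the conjugated element $\sigma_{m_1}\cdots\sigma_1\pi\sigma_1\cdots\sigma_{m_1}\textbf{e}$ for the second-factor $\pi$), the same identification of a basis of $\textbf{e}\vv\textbf{e}$ with the $\sigma_{\dot w}x_1^{n_1}\cdots x_m^{n_m}\textbf{e}(\textbf{ij})$ for $w\in Q$, and the same reliance on Lemma~\ref{lemma: group isomorphism} together with compatible reduced expressions for elements of $Q$. The only cosmetic difference is that the paper concludes injectivity by comparing graded dimensions of the two algebras rather than by matching bases directly.
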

\begin{proof}
Define a map $\psi:\w_{\nu_1} \otimes_{\textbf{k}} \w_{\nu_2} \longrightarrow \textbf{e}\vvlm\textbf{e} $ by,
\begin{equation*}
\begin{split}
\idemp \otimes \idempj &\mapsto \idempij \\
x_k\idemp \otimes \idempj &\mapsto x_k\idempij \quad 1\leq k \leq m_1\\
\sigma_k \idemp \otimes \idempj &\mapsto \sigma_k\idempij \quad 1\leq k \leq m_1-1\\
\pi \idemp \otimes \idempj &\mapsto \pi\idempij \\
\idemp \otimes x_k\idempj &\mapsto x_{m_1+k}\idempij \quad 1\leq k \leq m_2\\
\idemp \otimes \sigma_k\idempj &\mapsto \sigma_{m_1+k}\idempij \quad 1\leq k\leq m_2-1\\
\idemp \otimes \pi\idempj &\mapsto \sigma_{m_1}\cdots \sigma_1\pi \sigma_1 \cdots \sigma_{m_1}\idempij,
\end{split}
\end{equation*}
extending \textbf{k}-linearly and multiplicatively. Then, by inspection of the defining relations, one can see that $\psi$ is well-defined and is therefore a morphism of \textbf{k}-algebras. Let $Q \subset W^B_m$ be the subgroup of $W^B_m$ as in Lemma \ref{lemma: group isomorphism}. For each $w \in Q$, fix a reduced expression and consider
\begin{equation*}
\mathcal{B}^\prime=\{ \sigma_w x_1^{n_1}\cdots x_m^{n_m}\idempij \mid w \in Q, n_k \in \mathbb{N}_0 \hspace{0.5em} \forall k, \textbf{i} \in {^\theta}I^{\nu_1}, \textbf{j} \in {^\theta}J^{\nu_2} \}.
\end{equation*}
This set is linearly independent because it is a subset of the basis given for VV algebras in Lemma \ref{vv basis}. $\mathcal{B}^\prime$ spans $\textbf{e}\w_{\nu_1+\nu_2}\textbf{e}$ because the $w \in Q$ are precisely the elements of $W^B_m$ that permute the $(i_1, \ldots, i_{m_1})$ and the $(j_1, \ldots, j_{m_2})$, and which do not intertwine elements $i_r \in I$ with elements $j_t \in J$. So $\mathcal{B}^\prime$ is a \textbf{k}-basis for $\textbf{e}\w_{\nu_1+\nu_2}\textbf{e}$.
\\
\\
We can now calculate the graded dimension of these algebras and show that they are indeed equal. First we calculate the graded dimension of $\textbf{e}\vv\textbf{e}$ using $\mathcal{B}^\prime$. The polynomial part of this basis contributes a factor of $\frac{1}{(1-q^2)^{m}}$ to this graded dimension. Then,
\begin{equation*}
\textrm{dim}_q(\textbf{e}\vv\textbf{e})=\frac{1}{(1-q^2)^{m_1+m_2}}\sum_{\substack{w \in Q \\ \textbf{i}\in {^\theta}I^{\nu_1} \\ \textbf{j} \in {^\theta}J^{\nu_2}}} q^{\textrm{deg}(\sigma_w\idempij)}.
\end{equation*}
Consider $\sigma_w \idempij$, for some $w \in Q$. Note that
\begin{equation*}
\begin{array}{l l l}
\mathrm{deg}(\sigma_{s_{m_1}\cdots s_1s_0s_1 \cdots s_{m_1}}\idempij) & = & \mathrm{deg}(\sigma_{s_0}\idempj) \\
\mathrm{deg}(\sigma_{s_{m_1+1}}\idempij) & = & \mathrm{deg}(\sigma_{s_1}\idempj) \\
 & \vdots & \\
\mathrm{deg}(\sigma_{s_{m_1+m_2-1}}\idempij) & = & \mathrm{deg}(\sigma_{s_{m_2-1}}\idempj). 
\end{array}
\end{equation*}
Similarly, $\mathrm{deg}(\sigma_{s_i}\idempij)=\mathrm{deg}(\sigma_{s_i}\idemp)$ for all $0 \leq i \leq m_1-1$. Hence,
\begin{equation*}
\mathrm{deg}(\sigma_w\idempij)=\mathrm{deg}(\sigma_u\idemp) + \mathrm{deg}(\sigma_v\idempj)
\end{equation*}
for some $u \in W^B_{m_1}$, $v \in W^B_{m_2}$. Then,
\begin{equation*}
\sum_{\substack{w \in Q \\ \textbf{i}\in {^\theta}I^{\nu_1} \\ \textbf{j} \in {^\theta}J^{\nu_2}}} q^{\textrm{deg}(\sigma_w\idempij)} = \sum_{\substack{u \in W^B_{m_1}, v \in W^B_{m_2} \\\textbf{i} \in {^\theta}I^{\nu_1} \\ \textbf{j} \in {^\theta}J^{\nu_2}}} q^{\textrm{deg}(\sigma_u\idemp) + \textrm{deg}(\sigma_v\idempj)}.
\end{equation*}
On the other hand, for $\w_{\nu_1} \otimes \w_{\nu_2}$,
\begin{equation*}
\begin{split}
\textrm{dim}_q(\w_{\nu_1}\otimes \w_{\nu_2})&=\frac{1}{(1-q^2)^{m_1}}\sum_{\substack{u \in W^B_{m_1} \\ \textbf{i} \in {^\theta}I^{\nu_1}}} q^{\textrm{deg}(\sigma_u\idemp)}\frac{1}{(1-q^2)^{m_2}}\sum_{\substack{v \in W^B_{m_2} \\ \textbf{j} \in {^\theta}J^{\nu_2}}} q^{\textrm{deg}(\sigma_v\idempj)} \\
&=\frac{1}{(1-q^2)^{m_1+m_2}}\sum_{\substack{u \in W^B_{m_1}, v \in W^B_{m_2} \\\textbf{i} \in {^\theta}I^{\nu_1} \\ \textbf{j} \in {^\theta}J^{\nu_2}}} q^{\textrm{deg}(\sigma_u\idemp) + \textrm{deg}(\sigma_v\idempj)}.
\end{split}
\end{equation*}
Hence, we have shown
\begin{equation*}
\textrm{dim}_q(\textbf{e}\vv\textbf{e})=\textrm{dim}_q(\w_{\nu_1} \otimes \w_{\nu_2}).
\end{equation*} 
To prove the claimed result it now suffices to prove that $\psi$ is surjective. 
\\
\\
Rename the generators of $Q$ as follows. Put
\begin{equation*}
c_{i+1} = \left\{
\begin{array}{l l}
s_{m_1} \cdots s_1s_0s_1 \cdots s_{m_1} & \quad i=m_1\\
s_i & \quad i\neq m_1
\end{array} \right. 
\end{equation*}
so that $Q$ is generated by $\{ c_1, c_2, \ldots, c_{m_1+m_2} \}$. Define $\ell_Q: Q \longrightarrow \mathbb{N}_0$ in the following way. For the identity element $1 \in Q$ put $\ell_Q(1)=0$. Any $w \in Q$ can be written as a product $w=c_{i_1}\cdots c_{i_k}$. Pick these generators in such a way that $k$ is minimal. Then $\ell_Q(w)=k$. For example, $\ell_Q(c_i)=1$ for every $i$. We say that $w=c_{i_1}\cdots c_{i_k}\in Q$ is an $\ell_Q$-reduced expression for $w$ if $\ell_Q(w)=k$. 
\\
\\
Let $\ell: W^B_m \longrightarrow \mathbb{N}_0$ be the usual length function on $W^B_m$. The isomorphism $\phi$ from Lemma \ref{lemma: group isomorphism} demonstrates that any $\ell_Q$-reduced expression $c_{i_1}\cdots c_{i_k}$ is also a reduced expression with respect to $\ell$. For every $w \in Q$ fix an $\ell_Q$-reduced expression $w=c_{i_1}\cdots c_{i_k}$. Then $\sigma_w \idempij=\sigma_{c_{i_1}}\cdots \sigma_{c_{i_k}}\idempij$ for each $\textbf{i} \in {^\theta}I^{\nu_1}, \textbf{j} \in {^\theta}J^{\nu_2}$.
\\
\\
It is clear that $\psi$ is surjective on elements $\idempij$ and $x_i \idempij$, for all $\textbf{i} \in {^\theta}I^{\nu_1}$, $\textbf{j} \in {^\theta}J^{\nu_2}$. Notice that 
\begin{equation*}
\begin{split}
\psi(\sigma_k\idemp\otimes\idempj)&=\sigma_{c_{k+1}}\idempij \quad \textrm{for } 0\leq k \leq m_1-1\\
\psi(\idemp\otimes \sigma_k\idempj)&=\sigma_{c_{m_1+k+1}}\idempij \quad \textrm{for } 0\leq k \leq m_2-1.
\end{split}
\end{equation*}
Therefore $\psi$ is surjective on the basis $\mathcal{B}^\prime$ and hence on $\textbf{e}\w_{\nu_1+\nu_2}\textbf{e}$.
\end{proof}
\begin{lemma} \label{lemma:e full in the separated VV alg}
Let $\textbf{e}=\sum_{\substack{\textbf{i} \in {^\theta}I^{\nu_1} \\ \textbf{j} \in {^\theta}J^{\nu_2}}} \textbf{e}(\textbf{i}\textbf{j})$. Then $\textbf{e}$ is full in the separated VV algebra $\vv$.
\end{lemma}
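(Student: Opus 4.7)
The goal is to show $\vv\textbf{e}\vv = \vv$, equivalently $1 \in \vv\textbf{e}\vv$. Since $1 = \sum_{\textbf{k} \in {^\theta}(I\cup J)^\nu} \textbf{e}(\textbf{k})$, it suffices to prove that each idempotent $\textbf{e}(\textbf{k})$ lies in $\vv\textbf{e}\vv$. The plan is to conjugate every such $\textbf{e}(\textbf{k})$ by a product of the $\sigma$'s into an $\textbf{e}(\textbf{i}\textbf{j})$-summand of $\textbf{e}$, repeatedly exploiting the identity $\sigma_k^2\textbf{e}(\textbf{l})=\textbf{e}(\textbf{l})$ that Remark \ref{remark:sigma squares to 1 in separated VV} makes available across the two orbits.

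Fix $\textbf{k}=(k_1,\ldots,k_m) \in {^\theta}(I\cup J)^\nu$ and let $m_1$ be the number of entries of $\textbf{k}$ lying in $I$. Since $I$ and $J$ are disjoint $\mathbb{Z}\rtimes\mathbb{Z}_2$-orbits, each is closed under $\theta$; the condition $\sum_l k_l+\sum_l k_l^{-1}=\nu_1+\nu_2$ therefore splits cleanly along the two orbits, so the subtuple of $I$-entries of $\textbf{k}$ lies in ${^\theta}I^{\nu_1}$ and the subtuple of $J$-entries lies in ${^\theta}J^{\nu_2}$. Let $w \in \sm$ be the order-preserving permutation that sends all $I$-entries to the first $m_1$ positions, so that $w\textbf{k}=(\textbf{i}\textbf{j})$ with $\textbf{i}\in{^\theta}I^{\nu_1}$ and $\textbf{j}\in{^\theta}J^{\nu_2}$. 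I will fix the reduced expression $w=s_{j_r}\cdots s_{j_1}$ coming from bubble sort, so that every factor $s_{j_t}$ transposes a $J$-entry with the $I$-entry immediately to its right.

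At each intermediate tuple $\textbf{l}=s_{j_{t-1}}\cdots s_{j_1}\textbf{k}$ the entries $l_{j_t}$ and $l_{j_t+1}$ lie in distinct orbits, so by Remark \ref{remark:sigma squares to 1 in separated VV} we have $\sigma_{j_t}^2\textbf{e}(\textbf{l})=\textbf{e}(\textbf{l})$. Combined with the commutation $\sigma_{j_t}\textbf{e}(\textbf{l})=\textbf{e}(s_{j_t}\textbf{l})\sigma_{j_t}$ this gives $\textbf{e}(\textbf{l})=\sigma_{j_t}\textbf{e}(s_{j_t}\textbf{l})\sigma_{j_t}$, and iterating through the $r$ swaps of the sort yields
\begin{equation*}
\textbf{e}(\textbf{k}) = \sigma_{j_1}\cdots \sigma_{j_r}\,\textbf{e}(\textbf{i}\textbf{j})\,\sigma_{j_r}\cdots\sigma_{j_1}.
\end{equation*}
Since $\textbf{e}(\textbf{i}\textbf{j})$ is by construction a summand of $\textbf{e}$, this places $\textbf{e}(\textbf{k})$ in $\vv\textbf{e}\vv$; summing over all $\textbf{k}$ gives $1\in \vv\textbf{e}\vv$, which is the fullness of $\textbf{e}$. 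There is no serious obstacle here beyond the (routine) verification that a reduced expression for $w$ consisting entirely of cross-orbit adjacent transpositions exists, which is exactly what bubble sort delivers and is the only place the hypothesis $I\cap J=\emptyset$ is used.
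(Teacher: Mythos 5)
Your proposal is correct and follows essentially the same strategy as the paper: conjugate $\textbf{e}(\textbf{k})$ into a summand $\textbf{e}(\textbf{i}\textbf{j})$ of $\textbf{e}$ by a product of $\sigma$'s realising only cross-orbit adjacent transpositions, using $\sigma_k^2\textbf{e}(\textbf{l})=\textbf{e}(\textbf{l})$ from Remark \ref{remark:sigma squares to 1 in separated VV}. The paper merely packages the same swaps into blocks $\sigma_{w_1},\ldots,\sigma_{w_{r-1}}$ (each block sliding one $I$-entry leftward) rather than writing them out transposition by transposition as your bubble sort does.
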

\begin{proof}
We must show $\vv=\vv \textbf{e} \vv$. We take any idempotent $\textbf{e}(\textbf{k}) \in \vv$, not a summand of $\textbf{e}$, and show that $\textbf{e}(\textbf{k}) \in \vv \textbf{e} \vv$. Suppose $k_1=i \in I$ (the same argument holds if $k_1=j \in J$). Let $\varepsilon_1, \ldots, \varepsilon_r$ denote the positions of entries belonging to $I$, and assume $\varepsilon_1 < \cdots < \varepsilon_r$. Since $k_1 =i \in I$, we have $\varepsilon_1=1$. Let $w_1=s_2s_3s_4 \cdots s_{\varepsilon_2-1} \in \sm$, where the $s_j$ are the generators of $\sm$. Then,
\begin{equation*}
\sigma^\rho_{w_1}\sigma_{w_1}\textbf{e}(\textbf{k})=\textbf{e}(\textbf{k})
\end{equation*}
because $\sigma_r^2\idemp = \idemp$ when there are no arrows between $i_r$ and $i_{r+1}$.
\\
\\
Suppose $\sigma_{w_1} \textbf{e}(\textbf{k})=\textbf{e}(\textbf{k}_1) \sigma_{w_1}$. Then the first two entries of $\textbf{k}_1$ are elements of $I$. Let $w_2=s_3s_4s_5\cdots s_{\varepsilon_3-1} \in \sm$. Then,
\begin{equation*}
\sigma^\rho_{w_2}\sigma_{w_2}\textbf{e}(\textbf{k}_1)=\textbf{e}(\textbf{k}_1)
\end{equation*}
for the same reasoning as above. Suppose $\sigma_{w_2} \textbf{e}(\textbf{k}_1)=\textbf{e}(\textbf{k}_2) \sigma_{w_2}$. Then the first three entries of $\textbf{k}_2$ are elements of $I$.
\\
\\
Continuing like this we obtain $\sigma_{w_2}, \ldots, \sigma_{w_{r-1}}$ with $\sigma_{w_t}^\rho \sigma_{w_t} \textbf{e} (\textbf{k}_{t-1}) = \textbf{e} (\textbf{k}_{t-1})$, for $t$ with $2 \leq t \leq r-1$. Then we have,
\begin{equation*}
\sigma_{w_1}^\rho \sigma_{w_2}^\rho \cdots \sigma_{w_{r-1}}^\rho \idempij \sigma_{w_{r-1}} \cdots \sigma_{w_2}\sigma_{w_1}\textbf{e}(\textbf{k}) = \textbf{e}(\textbf{k}),
\end{equation*}
and $\idempij$ is a summand of $\textbf{e}$. Hence $\textbf{e}(\textbf{k}) \in \vv \textbf{e} \vv$ so that $\vv = \vv \textbf{e} \vv$, as required.
\end{proof}
\begin{corollary} \label{theorem:ME in the separated case}
$\vv$ and $\w_{\nu_1} \otimes_{\textbf{k}} \w_{\nu_2}$ are Morita equivalent.
\end{corollary}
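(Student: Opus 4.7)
The plan is to combine the two preceding results with the standard Morita-theoretic principle that a full idempotent in a ring $R$ produces a Morita equivalence between $R$ and the corner algebra at that idempotent. Concretely, by Lemma \ref{lemma:e full in the separated VV alg} we have $\vv = \vv\textbf{e}\vv$, so $\textbf{e}$ is a full idempotent, and by Proposition \ref{proposition:w tensor w iso to ewe} the corner algebra $\textbf{e}\vv\textbf{e}$ is isomorphic to $\w_{\nu_1}\otimes_{\textbf{k}}\w_{\nu_2}$. Chaining these together gives the claimed Morita equivalence; essentially all the technical content has already been extracted in the preceding lemma and proposition.

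In more detail, I would first invoke the classical criterion: if $e \in R$ is an idempotent with $ReR = R$, then the left $R$-module $Re$ is a progenerator, whose endomorphism ring (acting on the right) is canonically isomorphic to $eRe$, and the functor
\begin{equation*}
\mathrm{Hom}_R(Re,-) \,\cong\, eR \otimes_R - \colon R\text{-Mod} \longrightarrow eRe\text{-Mod}
\end{equation*}
is an equivalence of categories with quasi-inverse $Re \otimes_{eRe} -$. To verify this I would point out that $Re$ is finitely generated projective since $R = Re \oplus R(1-e)$ as left $R$-modules, and that it generates $R$-Mod precisely because $ReR = R$. This is exactly the classical statement (see for instance standard references on Morita theory).

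Applying this with $R = \vv$ and $e = \textbf{e}$, Lemma \ref{lemma:e full in the separated VV alg} supplies the fullness hypothesis $\vv\textbf{e}\vv = \vv$, so we obtain a Morita equivalence between $\vv$ and $\textbf{e}\vv\textbf{e}$. Then Proposition \ref{proposition:w tensor w iso to ewe} identifies $\textbf{e}\vv\textbf{e}$ with $\w_{\nu_1}\otimes_{\textbf{k}}\w_{\nu_2}$ as $\textbf{k}$-algebras, and composing the two equivalences proves the corollary. There is no real obstacle at this stage: the genuine work lay in producing the algebra isomorphism of Proposition \ref{proposition:w tensor w iso to ewe} (done via a graded-dimension comparison together with the quasi-parabolic subgroup $Q$ from Lemma \ref{lemma: group isomorphism}) and in the explicit idempotent-completion argument of Lemma \ref{lemma:e full in the separated VV alg}.
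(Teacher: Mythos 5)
Your argument is correct and is essentially the paper's own proof: fullness of $\textbf{e}$ (Lemma \ref{lemma:e full in the separated VV alg}) makes $\vv\textbf{e}$ a progenerator, and $\mathrm{End}_{\vv}(\vv\textbf{e}) \cong \textbf{e}\vv\textbf{e} \cong \w_{\nu_1}\otimes_{\textbf{k}}\w_{\nu_2}$ by Proposition \ref{proposition:w tensor w iso to ewe}, giving the Morita equivalence. The extra detail you supply on the classical corner-algebra criterion is standard and does not change the route.
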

\begin{proof}
Using Lemma \ref{lemma:e full in the separated VV alg}, we have that $\vv\textbf{e}$ is a progenerator in $\vv$-Mod. To show Morita equivalence it remains to show $\textrm{End}_{\vv}(\vv\textbf{e}) \cong \mathfrak{W}_{\nu_1} \otimes \mathfrak{W}_{\nu_2}$. But, by Proposition \ref{proposition:w tensor w iso to ewe} and the fact that $\textrm{End}_{\vv}(\vv\textbf{e}) \cong \textbf{e}\vv\textbf{e}$, we are done.
\end{proof}
\begin{remark} 
In this section we have defined families of VV algebras arising from unions of $\mathbb{Z} \rtimes \mathbb{Z}_2$-orbits. But in fact Corollary \ref{theorem:ME in the separated case} shows that in order to study these VV algebras it suffices to study the families of VV algebras arising from a single $\mathbb{Z} \rtimes \mathbb{Z}_2$-orbit, as we defined in Section \ref{section:vv definition}. A slightly modified version of this theorem, using the same proof, explains why we may assume that $\nu$ has connected support.
\end{remark}
\subsection{Morita Equivalence in the Case $p,q \not\in I$} \label{section:Morita equivalence}
In this section we assume $p,q \not\in I$. Then $I=I_\lambda$ is the $\mathbb{Z} \rtimes \mathbb{Z}_2$-orbit of $\lambda \in \textbf{k}^\times$, for some $\lambda \neq p,q$.
\\
\\
The defining relations of $\vv$ are dependent on whether or not $q$ is an element of $I$. In particular, in this case, for every idempotent $\idemp$, we have
\begin{equation*}
\begin{split}
\pi^2\idemp &=\idemp \\
\pi \sigma_1 \pi \sigma_1 \idemp &= \sigma_1 \pi \sigma_1 \pi \idemp \\
\textrm{deg}(\pi\idemp)&=0
\end{split}
\end{equation*}
For any VV algebra $\vv$ we can consider various idempotent subalgebras $\textbf{e}\vv\textbf{e}$, for different choices of $\textbf{e}$. Each of these idempotent subalgebras may or may not be isomorphic to a KLR algebra (see Proposition \ref{proposition:klr an idempotent subalgebra}). Among these KLR algebras we can always distinguish $\subklr^+$ and $\subklr^-$, as mentioned in Remark \ref{remark:every VV has KLR+ as idemp subalg}. Here we show that the VV algebras arising from the setting $p,q \not\in I$ are Morita equivalent to KLR algebras of type $A$. Namely, for any $\nu \in {^\theta}\mathbb{N}I$, $\vv$ and $\subklr^+$ are Morita equivalent, as are $\vv$ and $\subklr^-$. We show this here for $\subklr^+$. Let $|\nu|=2m$. Again, we are using the notation as in Remark \ref{definition:KLR^+ and KLR^-}. To stress this point; for $\nu = \sum_{i \in I^+}\nu_ii + \sum_{i \in I^-}\nu_ii$ we set $\tilde{\nu}= \sum_{i \in I^+}\nu_ii \in \mathbb{N}I^+$.
\begin{theorem} \label{theorem:VV Morita equiv to KLR}
$\vv$ and $\subklr^+$ are Morita equivalent.
\end{theorem}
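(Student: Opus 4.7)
The plan is to exhibit the idempotent $\mathbf{e} = \sum_{\mathbf{i} \in I^{\tilde{\nu}^+}} \mathbf{e}(\mathbf{i}) \in \vv$ as a full idempotent whose corner algebra is $\subklr^+$; then $\vv\mathbf{e}$ is a progenerator and $\mathrm{End}_{\vv}(\vv\mathbf{e}) \cong \mathbf{e}\vv\mathbf{e} \cong \subklr^+$ yields the Morita equivalence. The corner identification is essentially free: since $\tilde{\nu}^+$ is supported on $I^+$ and $p \notin I$ forces $I^+ \cap I^- = \emptyset$, no summand of the form $i + i^{-1}$ appears in $\tilde{\nu}^+$, so Proposition \ref{proposition:klr an idempotent subalgebra} immediately gives $\mathbf{e}\vv\mathbf{e} \cong \subklr^+$. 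All the real content therefore lies in fullness, $\vv\mathbf{e}\vv = \vv$, which by Lemma \ref{vv basis} reduces to checking $\mathbf{e}(\mathbf{k}) \in \vv\mathbf{e}\vv$ for every $\mathbf{k} \in {}^\theta I^\nu$.

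For fullness I would exploit two case-(A1)-specific simplifications. First, $p \notin I$ makes $\Gamma_I$ split as the disjoint union $\Gamma_{I^+} \sqcup \Gamma_{I^-}$ with no arrows across the interface, so whenever $j_k$ and $j_{k+1}$ lie in different components we have $\sigma_k^2 \mathbf{e}(\mathbf{j}) = \mathbf{e}(\mathbf{j})$, equivalently the ``invertible swap'' $\mathbf{e}(\mathbf{j}) = \sigma_k \mathbf{e}(s_k \mathbf{j}) \sigma_k$. Second, $q \notin I$ forces $i_1 \neq q^{\pm 1}$ for every $\mathbf{i} \in {}^\theta I^\nu$, so $\pi^2 \mathbf{e}(\mathbf{i}) = \mathbf{e}(\mathbf{i})$, giving the ``invertible flip'' $\mathbf{e}(\mathbf{i}) = \pi \mathbf{e}(i_1^{-1}, i_2, \ldots, i_m) \pi$. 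The algorithm on $\mathbf{e}(\mathbf{k})$ is then: (i) apply cross-component swaps only to shuffle all $I^-$-entries of $\mathbf{k}$ into the leftmost positions, producing an idempotent of the form $\mathbf{e}(k_{i_1}^-, \ldots, k_{i_r}^-, k_{j_1}^+, \ldots, k_{j_{m-r}}^+)$; (ii) apply $\pi$ to flip the leftmost entry into $I^+$; (iii) shuttle the new $I^+$-entry rightward past the remaining $I^-$-entries by further cross-component swaps until it joins the $I^+$-block. Each iteration of (ii)--(iii) strictly decreases the number of $I^-$-entries, so after $r$ iterations we reach some $\mathbf{e}(\mathbf{i})$ with $\mathbf{i} \in I^{\tilde{\nu}^+}$, i.e.\ a summand of $\mathbf{e}$. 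Composing the invertible swaps and flips at each stage produces an identity $\mathbf{e}(\mathbf{k}) = A\, \mathbf{e}(\mathbf{i})\, B$ for some $A, B \in \vv$, and since $\mathbf{e}(\mathbf{i}) = \mathbf{e}(\mathbf{i})\mathbf{e}$ this places $\mathbf{e}(\mathbf{k}) \in \vv\mathbf{e}\vv$.

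The main obstacle, and the reason step (i) must happen before any $\pi$ is applied, is that swaps between two $I^+$-entries or two $I^-$-entries are \emph{not} invertible on the idempotent: $\sigma_k^2 \mathbf{e}(\mathbf{j})$ then picks up a factor $\pm(x_{k+1} - x_k)$ or $(x_{k+1}-x_k)(x_k-x_{k+1})$, and such polynomial factors cannot be used to rewrite $\mathbf{e}(\mathbf{j})$ cleanly in terms of $\mathbf{e}(s_k \mathbf{j})$. By aggregating the $I^-$-entries to one side first, every $\sigma_k$ invoked in the algorithm sits across the $I^+/I^-$ interface, and by the disjoint-components structure the invertible-swap identity genuinely applies at each step; the $\pi$-step is unconditionally invertible thanks to $q \notin I$. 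Once fullness is in hand, the standard idempotent-truncation argument, as used in the proof of Corollary \ref{theorem:ME in the separated case}, delivers the Morita equivalence.
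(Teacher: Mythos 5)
Your proposal is correct and follows essentially the same route as the paper: the same idempotent $\textbf{e}=\sum_{\textbf{i}\in I^{\tilde{\nu}}}\idemp$, the corner identification $\textbf{e}\vv\textbf{e}\cong\subklr^+$ via Proposition \ref{proposition:klr an idempotent subalgebra}, and fullness proved by conjugating $\idemp$ into a summand of $\textbf{e}$ using cross-component $\sigma_k$'s and $\pi$, which square to the identity on the relevant idempotents precisely because $p,q\notin I$. The only difference is presentational: the paper packages your iterative swap-and-flip algorithm into a single fixed reduced expression $\sigma_\eta$ for a minimal length coset representative $\eta\in\mathcal{D}(W^B_m/\sm)$, with $\sigma_\eta\idempj\sigma_{\eta^{-1}}=\idemp$.
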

\begin{proof}
It suffices to find a progenerator $P \in \vv$-Mod such that $\subklr^+ \cong\textrm{End}_{\vv}(P)$.
\\
\\
Let 
\begin{equation*}
\textbf{e}:=\sum_{\textbf{i} \in I^{\tilde{\nu}}} \idemp
\end{equation*}
Now, $\vv\textbf{e}$ is a progenerator if and only $\textbf{e}$ is full in $\vv$, i.e. if and only if $\vv\textbf{e}\vv=\vv$.
\\
\\
Clearly $\vv\textbf{e}\vv\subseteq\vv$. So it remains to show $\vv\subseteq\vv\textbf{e}\vv$. We do this by showing that every idempotent $\idemp$ lies in $\vv\textbf{e}\vv$. Then, since $\sum_{\textbf{i} \in {^\theta}I^\nu} \idemp =1$, it follows that $1 \in \vv\textbf{e}\vv$ and so $\vv\textbf{e}\vv \subseteq \vv$. 
\\
\\
If $\idemp$ is a summand of $\textbf{e}$ then it is clear that $\idemp \in \vv \textbf{e} \vv$. Take $\idemp \in \vv$ not a summand of $\textbf{e}$. Then there are finitely many entries of $\textbf{i}$, say $i_{k_1}, \ldots, i_{k_r}$, with $i_{k_s} \in I^-$ for all $1\leq s \leq r$, and with $k_1 < k_2 < \cdots < k_r$. Let $\eta$ be the minimal length left coset representative of $\sm$ in $W^B_m$ given by
\begin{equation*}
\eta=s_{k_1-1}\cdots s_1s_0 \cdots \cdots s_{k_{r-1}-1}\cdots s_1s_0 s_{k_r-1}\cdots s_1s_0
\end{equation*}
Fix this reduced expression of $\eta$. Fix the reduced expression
\begin{equation*}
\eta^{-1}=s_0 s_1 \cdots s_{k_r-1} s_0 s_1 \cdots s_{k_{r-1}-1} \cdots \cdots s_0s_1 \cdots s_{k_1-1}. 
\end{equation*}
Then $\idemp\sigma_\eta = \sigma_\eta \idempj$ for some $\idempj$, where $\textbf{j} \in I^{\tilde{\nu}}$ so that $\idempj$ is a summand of $\textbf{e}$, and
\begin{equation*}
\sigma_\eta \idempj\sigma_{\eta^{-1}}=\idemp.
\end{equation*}
Hence $\idemp \in \vv\textbf{e}\vv$ as required. It follows now that $\textbf{e}$ is full in $\vv$ so that $\vv\textbf{e}$ is a progenerator in $\vv$-Mod.
\\
\\
It remains to show $\subklr^+ \cong \textrm{End}_{\vv}(\vv\textbf{e})$. But, \hbox{$\textrm{End}_{\vv}(\vv\textbf{e}) \cong \textbf{e}\vv\textbf{e}$} and, by Proposition \ref{proposition:klr an idempotent subalgebra}, $\textbf{e}\vv\textbf{e} \cong \subklr^+$. It follows that $\vv$ and $\subklr^+$ are Morita equivalent.
\end{proof}
Note that the dual proof shows Morita equivalence between $\vv$ and $\subklr^-$, for $\tilde{\nu} \in \mathbb{N}I^-$.
\subsection*{A Note on the Cases: $p\in I$, $q\in I$} \label{subsection:a note on the cases p,q in I}
\subsubsection*{Case: $\bf{q \in I}$}
Consider first the case $q\in I$, $p \not\in I$. Take $\nu \in {^\theta}\mathbb{N}I_q$ and consider the associated VV algebra $\vv$. Note that many of the relations now depend on whether or not $i_1=q^{\pm 1}$. If we pick $\nu$ so that $q$ is not a summand then of course we always have $i_1 \neq q^{\pm 1}$. Hence, in this case, the relations are exactly the same as those in the ME setting so that $\vv$ is Morita equivalent to $\subklr^+$. From now on, when we work in the setting $q\in I$, we assume $q$ (and hence $q^{-1}$) is a summand of $\nu$. That is, $\nu_q \geq 1$. 
\subsubsection*{Case: $\bf{p \in I}$}
Now consider the case $p\in I$, $q \not\in I$. Take $\nu \in {^\theta}\mathbb{N}I_p$ and the associated VV algebra $\vv$. The defining relations do not explicitly depend on whether or not $p \in I$, but the subtle difference arises when we examine the underlying quiver $\Gamma_{I_p}$. Locally, with regards to the relations, $\Gamma_{I_p}$ is exactly the same as $\Gamma_{I_\lambda}$, for some $\lambda\neq p,q$, except in the following neighbourhood of $\Gamma_{I_p}$.
\begin{center}
\begin{tikzpicture}
\matrix (m) [matrix of math nodes, row sep=2.5em,
column sep=2em, text height=1.5ex, text depth=0.25ex]
{ \cdots & p & p^{-1} & \cdots \\};
\path[->]
(m-1-1) edge  (m-1-2)
(m-1-2) edge  (m-1-3)
(m-1-3) edge  (m-1-4);
\end{tikzpicture}
\end{center}
In other words, $\Gamma_{I_p^+}$ and $\Gamma_{I_p^-}$ are not two disjoint connected components of $\Gamma_{I_p}$. If we choose $\nu \in {^\theta}\mathbb{N}I$ with $\nu_p \leq 1$ the defining relations of $\vv$ are exactly those in the ME setting. So again, in this case, $\vv$ and $\subklr^+$ are Morita equivalent. From now on, when we work in the setting $p \in I$, we assume $\nu_p \geq 2$ (and hence $\nu_{p^{-1}} \geq 2$). 
\subsection{Morita Equivalences in the Case $q \in I$}
For the remainder of this section, unless stated otherwise, we assume that $p$ is not a root of unity.
\subsubsection{Affine Cellularity of Classes of VV Algebras} \label{subsection:affine cellularity}
Graham and Lehrer defined the notion of cellularity for finite-dimensional algebras in \cite{GrahamLehrer}. They defined these algebras in terms of a basis satisfying various combinatorial properties. Showing that an algebra is cellular gives rise to a parametrisation of its irreducible modules. The notion of affine cellularity was introduced by Koenig and Xi in \cite{KoenigXi}, and extends the notion of cellularity to algebras which need not be finite-dimensional.
\begin{defn}
A $\textbf{k-involution}$ of a $\textbf{k}$-algebra $A$ is a $\textbf{k}$-linear anti-automorphism $\omega$ with $\omega^2=\textrm{id}_A$.
\end{defn}
By an \textbf{affine algebra} we mean a commutative $\textbf{k}$-algebra $B$ which is a quotient of a polynomial ring $\textbf{k}[x_1,\ldots,x_n]$ in finitely many variables.
\begin{defn}[\cite{KoenigXi}, Definition 2.1] \label{definition:affine cell ideal}
Let $A$ be a unitary $\textbf{k}$-algebra with a $\textbf{k}$-involution $\omega$ on $A$. A two-sided ideal $J \subseteq A$ is called an \textbf{affine cell ideal} if and only if the following data are given and the following conditions are satisfied.
\begin{enumerate}
\item $\omega(J)=J$.
\item There exists a free $\textbf{k}$-module $V$ of finite rank and an affine commutative $\textbf{k}$-algebra $B$ with identity and with a $\textbf{k}$-involution $i$ such that $\Delta:=V\otimes_{\textbf{k}}B$ is an $A$-$B$-bimodule, where the right $B$-module structure is induced by that of the right regular $B$-module.
\item There is an $A$-$A$-bimodule isomorphism $\alpha: J \longrightarrow \Delta \otimes_B \Delta^\prime$, where $\Delta^\prime = B \otimes_\textbf{k} V$ is a $B$-$A$-bimodule with the left $B$-structure induced by the left regular $B$-module. The right $A$-structure is induced via $\omega$. That is,
\begin{equation*}
(b\otimes v)a:=s(\omega(a)(v\otimes b)) \textrm{ for } a\in A, b \in B, v \in V
\end{equation*}
where $s:V \otimes B \longrightarrow B \otimes V$, $v \otimes b \mapsto b \otimes v$ is the switch map, such that the following diagram commutes:
\begin{center}
\begin{tikzpicture}
\matrix (m) [matrix of math nodes,row sep=3.5em, column sep=4.0em,]
{J & \Delta \otimes_B \Delta^\prime \\
J & \Delta \otimes_B \Delta^\prime \\};

\path[-stealth] (m-1-1) edge node [above] {$\alpha$} (m-1-2);
\path[-stealth] (m-1-1) edge node [left] {$\omega$} (m-2-1);
\path[-stealth] (m-2-1) edge node [below] {$\alpha$} (m-2-2);
\path[-stealth] (m-1-2) edge node [right] {$v\otimes b \otimes b^\prime\otimes w \mapsto w\otimes i(b^\prime)\otimes i(b)\otimes v$} (m-2-2);
	
\end{tikzpicture}
\end{center}
\end{enumerate}
\end{defn}
\begin{defn}[\cite{KoenigXi}, Definition 2.1] \label{definition:affine cellular algebra}
A \textbf{k}-algebra $A$, with a \textbf{k}-involution $\omega$, is called \textbf{affine cellular} if there is a $\textbf{k}$-module decomposition $A=J_1^\prime \oplus \cdots \oplus J_n^\prime$, for some $n$, with $\omega(J^\prime_j)=J^\prime_j$ for each $j$ and such that setting $J_j=\bigoplus_{l=1}^jJ_l^\prime$ gives a chain of two-sided ideals of $A$,
\begin{equation*}
(0)=J_0 \subset J_1 \subset \cdots \subset J_n=A,
\end{equation*}
and for each $j$ the quotient $J_j^\prime \cong J_j/J_{j-1}$ is an affine cell ideal of $A/J_{j-1}$ (with respect to the involution induced by $\omega$ on the quotient). This chain is called a \textbf{cell chain} for the affine cellular algebra $A$. The module $\Delta$ is called a \textbf{cell lattice} for the affine cell ideal $J$.
\end{defn}
\begin{remark}
In \cite{KleshchevAffineQH}, Kleshchev gives graded versions of Definitions \ref{definition:affine cell ideal} and \ref{definition:affine cellular algebra} in which all algebras, ideals, etc. are graded and the maps $\omega$, $i$ are homogeneous. 
\end{remark}
In this Section we fix the following setting. Assume $q \in I$, $p \not\in I$, $p$ not a root of unity and take $\nu \in {^\theta}\mathbb{N}I_q$ with multiplicity one and $|\nu|=2m$, for some $m \in \mathbb{N}$. Suppose we have fixed a reduced expression $s_{i_1}\cdots s_{i_k}$ for some $w \in W^B_m$ so that $\sigma_w=\sigma_{i_1}\cdots \sigma_{i_k}$. Define
\begin{equation*}
\sigma^\rho_w:=\sigma_{i_k} \cdots \sigma_{i_1}.
\end{equation*}
Let $\Pi^+$ denote the root partitions of the idempotent subalgebra $\subklr^+ \subseteq \vv$ and let $\Pi^-$ denote the root partitions of the idempotent subalgebra $\subklr^- \subseteq \vv$. Put $\Pi^\pm=\Pi^+ \cup\Pi^-$. Throughout this section let $\textbf{e}:=\sum_{\lambda \in \Pi^\pm} \eilambda{\lambda}$ where $\textbf{e}(\textbf{i}_\lambda)$ denotes the idempotent associated to the root partition $\lambda \in \Pi^\pm$, as discussed in \ref{subsection:klr root partitions}. 
\\
\\
Define $\Pi(m)$ to be the following set. $\Pi(m):=\big\{ (a_1, \ldots, a_{m-1}) \mid a_i \in \{1,2\} \hspace{0.3em} \forall \hspace{0.1em} i \big\}$. There is a bijection,
\begin{equation} \label{equation:bijection theta}
\begin{split}
\theta : \Pi^+ &\longrightarrow \Pi(m) \\
\lambda &\mapsto (a_1, \ldots, a_{m-1})
\end{split}
\end{equation}
where $a_i=\left\{
\begin{array}{l l}
    1 & \hspace{0.1em} \textrm{if }p^{2i-2}q\textrm{ appears before }p^{2i}q \textrm{ in } \textbf{i}_\lambda \\
    2 & \hspace{0.1em} \textrm{if }p^{2i-2}q\textrm{ appears after }p^{2i}q \textrm{ in } \textbf{i}_\lambda.
\end{array} \right. $
\\
\\
Similarly there is a one-to-one correspondence between $\Pi^-$ and $\Pi(m)$.
\\
\\
The next lemma states that every idempotent $\idemp \in \vv$ is isomorphic to either an idempotent in $\subklr^+$ or to an idempotent in $\subklr^-$.
\begin{lemma} \label{lemma:isomorphic idemps}
Take $\idemp \in \vv$ with $\textbf{i} \notin I^{\tilde{\nu}^+} \cup I^{\tilde{\nu}^-}$. Note that precisely one of $q,q^{-1}$ will appear as an entry in $\textbf{i}$.
\begin{itemize}
\item[(i)] If $q$ is in position $k$ (i.e. $i_k = q$) then $\idemp \cong \idempj$ for some $\textbf{j} \in I^{\tilde{\nu}^+}$. 
\item[(ii)] If $q^{-1}$ is in position $k$ (i.e. $i_k=q^{-1}$) then $\idemp \cong \idempj$ for some $\textbf{j} \in I^{\tilde{\nu}^-}$.
\end{itemize}
\end{lemma}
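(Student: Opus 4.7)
The plan is to give, in case (i), an explicit isomorphism $\vv\idemp \cong \vv\idempj$ for some $\textbf{j}\in I^{\tilde{\nu}^+}$, built as a composite of transposition moves $\sigma_\ell$ and a sequence of $\pi$-flips. I begin with two structural observations that drive the whole argument: (a) since $p$ is not a root of unity and $q$ is not a power of $p$, the branches $I^+_q$ and $I^-_q$ of the quiver $\Gamma_{I_q}$ are disjoint and have no arrows between them, so any swap $\sigma_\ell\idemp$ that exchanges an $I^+$-entry with an $I^-$-entry satisfies $\sigma_\ell^2\idemp = \idemp$; and (b) because $\nu$ has multiplicity one and $q$ itself occupies some position of $\textbf{i}$, the entry $q^{-1}$ never occurs, so every $I^-$-entry in $\textbf{i}$ is distinct from $q^{\pm 1}$, whence $\pi^2\idemp = \idemp$ whenever such an entry sits in position $1$.

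Next, I would perform a grouping step. Let $a_1 < a_2 < \cdots < a_r$ be the positions of the $I^-$-entries in $\textbf{i}$; the hypothesis $\textbf{i}\notin I^{\tilde{\nu}^+}\cup I^{\tilde{\nu}^-}$ forces $r\geq 1$. I choose a product $\sigma_w$ of simple transpositions that moves each $I^-$-entry leftward to positions $1,\ldots,r$ while preserving the relative order of the $I^+$-entries, and only ever swaps an $I^-$ past an $I^+$. By observation (a), $\sigma_w\sigma_w^\rho\idemp = \idemp$ and $\sigma_w^\rho\sigma_w\textbf{e}(\textbf{i}') = \textbf{e}(\textbf{i}')$, giving $\vv\idemp\cong\vv\textbf{e}(\textbf{i}')$ with
\[
\textbf{i}' = (j^-_1,\ldots,j^-_r,\,j^+_1,\ldots,j^+_{m-r}).
\]

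Then I iterate a flip-and-move step. At stage $t$ ($1\leq t\leq r$) the tuple begins with $j^-_t$. Applying $\pi$ replaces $j^-_t$ by $(j^-_t)^{-1}\in I^+$, and observation (b) guarantees $\pi^2 = 1$ on both sides, so this is an isomorphism. I then apply $\sigma_{r-t}\cdots\sigma_2\sigma_1$ to slide the new $I^+$-entry $(j^-_t)^{-1}$ rightward past the remaining $j^-_{t+1},\ldots,j^-_r$ to its final position; every one of these swaps is again between an $I^+$- and an $I^-$-entry, so by observation (a) each is an isomorphism. Composing all $r$ stages produces $\textbf{j} = ((j^-_r)^{-1},\ldots,(j^-_1)^{-1},j^+_1,\ldots,j^+_{m-r})\in I^{\tilde{\nu}^+}$ together with the required isomorphism $\vv\textbf{e}(\textbf{i}')\cong\vv\idempj$. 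Case (ii) is entirely dual: with $q^{-1}\in\textbf{i}$, the roles of $I^+$ and $I^-$ are swapped, and the same procedure flips the $I^+$-entries (all $\neq q$) into $I^-$.

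The main obstacle will be the bookkeeping: at every intermediate configuration one must verify that the entry in position $1$ is not $q^{\pm 1}$ (to apply $\pi^2 = 1$) and that every adjacent pair being swapped crosses the $I^+/I^-$ boundary (to apply $\sigma^2 = 1$). Both of these are controlled simultaneously by the multiplicity-one hypothesis on $\nu$ together with the genericity of $p$, which is precisely what makes the argument work in this case.
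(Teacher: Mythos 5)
Your proposal is correct and takes essentially the same route as the paper: both build the isomorphism from explicit mutually inverse elements composed of cross-branch transpositions (invertible because $\sigma_k^2\textbf{e}(\textbf{i})=\textbf{e}(\textbf{i})$ when the adjacent entries lie in different branches of $\Gamma_{I_q}$) and $\pi$-flips at entries different from $q^{\pm 1}$ (invertible because $\pi^2\textbf{e}(\textbf{i})=\textbf{e}(\textbf{i})$ there), with the multiplicity-one hypothesis ruling out $q^{-1}$ in case (i). The only difference is scheduling of the moves — the paper brings each $I^-$-entry directly to position $1$ and flips it in turn, while you first group the $I^-$-entries at the front and then flip-and-slide — which is immaterial.
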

\begin{proof} 
For $(i)$, assume $q$ is in position $k$, i.e. $i_k=q$. We require $a \in \idemp \vv \idempj$, $b \in \idempj \vv \idemp$ such that $ab=\idemp$ and $ba=\idempj$, for some $\textbf{j} \in I^{\tilde{\nu}^+}$. 
\\
\\
Since $\textbf{i} \notin I^{\tilde{\nu}^+} \cup I^{\tilde{\nu}^-}$ there is a non-empty subset $\{ \varepsilon_1, \ldots, \varepsilon_d \} \subsetneq \{ 1, \ldots, m \}$, with $\varepsilon_r < \varepsilon_{r+1}$ for all $r$, such that $i_{\varepsilon_r}=p^{2n_r}q^{-1}$, where $n_r \in \mathbb{Z}\setminus \{0\}$ for all $r$. Note also that $\varepsilon_r \neq k$ for all $r$ since $i_k=q$. Put,
\begin{equation*}
\begin{split}
&a=\idemp\sigma_{\varepsilon_1-1}\cdots \sigma_1\pi\sigma_{\varepsilon_2-1}\cdots \sigma_1\pi \cdots \sigma_{\varepsilon_d-1} \cdots \sigma_1\pi\idempj \in \idemp\vv\idempj \\
&b=\idempj\pi\sigma_1\cdots \sigma_{\varepsilon_d-1}\pi \sigma_1\cdots \sigma_{\varepsilon_{d-1}-1}\cdots \pi\sigma_1\cdots\sigma_{\varepsilon_1-1}\idemp \in \idempj\vv\idemp.
\end{split}
\end{equation*}
Then $\idempj \in \vv$ is such that $\textbf{j} \in I^{\tilde{\nu}^+}$ and, from the relations, we know that $ab=\idemp$ and $ba=\idempj$ so that $\idemp\cong \idempj$. A similar argument is used for $(ii)$ when $i_k = q^{-1}$.
\end{proof}
\begin{corollary} \label{corollary:every idemp is iso to a root partition idemp}
Every idempotent $\idemp \in \vv$ is isomorphic to some $\textbf{e}(\textbf{j}_\lambda)$, $\lambda \in \Pi^{\pm}$.
\end{corollary}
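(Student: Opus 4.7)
The plan is to combine Lemma \ref{lemma:isomorphic idemps} with a sorting argument inside the KLR subalgebras $\subklr^\pm \subseteq \vv$ to collapse every idempotent onto a root-partition idempotent. Given $\idemp \in \vv$, if $\textbf{i} \notin I^{\tilde{\nu}^+} \cup I^{\tilde{\nu}^-}$ then Lemma \ref{lemma:isomorphic idemps} immediately supplies some $\textbf{j} \in I^{\tilde{\nu}^+} \cup I^{\tilde{\nu}^-}$ with $\idemp \cong \idempj$; by the symmetry between $I^+$ and $I^-$ induced by $\theta$, it suffices to handle the case $\textbf{j} \in I^{\tilde{\nu}^+}$, since the analogous argument carries over verbatim on the $I^-$ side.

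The key isomorphism move inside $\subklr^+$ is the following. Because $\nu$ has multiplicity one, consecutive entries of any tuple in $I^{\tilde{\nu}^+}$ are always distinct, so whenever $j_k \nleftrightarrow j_{k+1}$ the KLR relation yields $\sigma_k^2 \idempj = \idempj$ and $\sigma_k^2 \textbf{e}(s_k \textbf{j}) = \textbf{e}(s_k \textbf{j})$. Setting $a = \sigma_k \idempj$ and $b = \idempj \sigma_k$, one computes $ab = \textbf{e}(s_k \textbf{j})$ and $ba = \idempj$, which realises an explicit idempotent isomorphism $\idempj \cong \textbf{e}(s_k \textbf{j})$, in the same spirit as the arguments in the proofs of Proposition \ref{proposition:klr an idempotent subalgebra} and Lemma \ref{lemma:e full in the separated VV alg}. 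Consequently, $\idempj$ is isomorphic to $\textbf{e}(\textbf{j}')$ for any $\textbf{j}'$ obtainable from $\textbf{j}$ by a sequence of transpositions of list-adjacent entries that are not quiver-adjacent.

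To pick the target root partition, I would read off the invariant $(a_1,\ldots,a_{m-1}) \in \Pi(m)$ directly from $\textbf{j}$ using the recipe in \ref{equation:bijection theta}, and set $\lambda = \theta^{-1}(a_1,\ldots,a_{m-1}) \in \Pi^+$. By construction $\textbf{j}_\lambda$ agrees with $\textbf{j}$ on the relative order of each quiver-adjacent pair $\{p^{2i-2}q, p^{2i}q\}$, whereas every other pair of entries is quiver-disconnected and is therefore free to be reordered by allowed swaps. I would then prove by induction on $m$ that $\textbf{j}$ can be transformed into $\textbf{j}_\lambda$ by a sequence of such swaps. In the inductive step, locate the largest entry $p^{2(m-1)}q$ in $\textbf{j}$, drift it to its target position in $\textbf{j}_\lambda$ using that its only quiver-neighbour among the entries of $\tilde{\nu}^+$ is $p^{2(m-2)}q$, and then invoke the induction hypothesis on the length-$(m-1)$ tuple that remains.

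The main obstacle is precisely this combinatorial sliding step: one has to check that during the drift of $p^{2(m-1)}q$, the two entries $p^{2(m-1)}q$ and $p^{2(m-2)}q$ are never forced to be transposed with one another. This is where the invariant does the essential work. The coordinate $a_{m-1}$ records the relative order of this pair in both $\textbf{j}$ and $\textbf{j}_\lambda$, so the side of $p^{2(m-2)}q$ on which $p^{2(m-1)}q$ must end up is already compatible with its starting position in $\textbf{j}$, and every entry it needs to bypass en route is quiver-disconnected from it. Once this is verified, the inductive argument closes and every $\idemp \in \vv$ is exhibited as isomorphic to some $\textbf{e}(\textbf{j}_\lambda)$ with $\lambda \in \Pi^\pm$.
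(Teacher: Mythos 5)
Your proposal is correct and follows essentially the same route as the paper: reduce to $\textbf{i} \in I^{\tilde{\nu}^+}\cup I^{\tilde{\nu}^-}$ via Lemma \ref{lemma:isomorphic idemps}, read off the invariant in $\Pi(m)$ to select $\lambda$ via the bijection \ref{equation:bijection theta}, and realise the rearrangement of $\textbf{i}$ into $\textbf{i}_\lambda$ by adjacent transpositions of quiver-non-adjacent entries, each an idempotent isomorphism since $\sigma_k^2\idemp=\idemp$ there. Your explicit induction merely fills in the combinatorial sorting step that the paper's proof asserts in one line.
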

\begin{proof}
By Lemma \ref{lemma:isomorphic idemps}, it suffices to prove that every $\idemp$, with $\textbf{i} \in I^{\tilde{\nu}^+}$ or with $\textbf{i} \in I^{\tilde{\nu}^-}$, is isomorphic to some $\textbf{e}(\textbf{j}_\lambda)$, $\lambda \in \Pi^{\pm}$. Suppose $\textbf{i} \in I^{\tilde{\nu}^+}$ (the proof for when $\textbf{i} \in I^{\tilde{\nu}^-}$ is the same). Let $|\nu|=2m$ so that $|\tilde{\nu}^+|=m$.
\\
\\
Take any $\textbf{i} \in I^{\tilde{\nu}^+}$. Associate to $\textbf{i}$ the $(m-1)$-tuple $(a_1, \ldots, a_{m-1}) \in \Pi(m)$, where 
\begin{equation*}
a_j=\left\lbrace  
\begin{array}{l l}
1 & \textrm{ if } p^{2j-2}q \textrm{ appears before } p^{2j}q \textrm{ in } \textbf{i} \\
2 & \textrm{ if } p^{2j-2}q \textrm{ appears after } p^{2j}q \textrm{ in } \textbf{i}.
\end{array}
\right.
\end{equation*}
Then there exists a surjection,
\begin{equation*}
\begin{split}
g:I^{\tilde{\nu}^+} &\twoheadrightarrow \Pi(m) \\
\textbf{i} &\mapsto (a_1, \ldots, a_m).
\end{split}
\end{equation*}
We have seen already from \ref{equation:bijection theta} that there is a one-to-one correspondence between $\Pi(m)$ and root partitions associated to $\tilde{\nu}$. Now, $g(\textbf{i}) \in \Pi(m)$ and therefore corresponds to some root partition $\lambda \in \Pi^+$, i.e. $g(\textbf{i})=\theta(\lambda)$. This means we have $\idemp \cong \eilambda{\lambda}$, because when we permute entries of $\textbf{i}$ which do not affect $g(\textbf{i})$, we are permuting entries of $\textbf{i}$ in such a way that neighbouring vertices of $\Gamma_I$ never switch position, by definition. Hence we can permute entries of $\textbf{i}$ in this way to obtain a root partition, $\eilambda{\lambda}$. In other words, there exists an element $w \in \sm$ such that $\sigma_w^\rho \eilambda{\lambda} \sigma_w \idemp = \idemp$.
\end{proof}
\begin{corollary} \label{corollary:e is full in vv}
The idempotent $\textbf{e}=\sum_{\lambda \in \Pi^\pm} \eilambda{\lambda}$ is full in $\vv$.
\end{corollary}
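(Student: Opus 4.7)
The plan is to show that $\vv \textbf{e} \vv = \vv$. Since $\sum_{\textbf{i} \in {^\theta}I^\nu} \idemp = 1$ in $\vv$, it is enough to prove that every idempotent $\idemp$ lies in the two-sided ideal $\vv \textbf{e} \vv$; this will give $1 \in \vv\textbf{e}\vv$ and hence the claim.

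The main input is the immediately preceding Corollary \ref{corollary:every idemp is iso to a root partition idemp}, which asserts that every idempotent $\idemp \in \vv$ is isomorphic (as a left $\vv$-module summand of $\vv$, or equivalently via a Peirce-type pair of elements) to $\textbf{e}(\textbf{j}_\lambda)$ for some $\lambda \in \Pi^\pm$. I would unpack this isomorphism concretely: it produces elements $a \in \idemp \vv \textbf{e}(\textbf{j}_\lambda)$ and $b \in \textbf{e}(\textbf{j}_\lambda) \vv \idemp$ with $ab = \idemp$ and $ba = \textbf{e}(\textbf{j}_\lambda)$. Combined with the fact that $\textbf{e}(\textbf{j}_\lambda)$ is one of the summands of $\textbf{e}$, so $\textbf{e}(\textbf{j}_\lambda) = \textbf{e}(\textbf{j}_\lambda)\,\textbf{e}\,\textbf{e}(\textbf{j}_\lambda)$, this yields
\begin{equation*}
\idemp = a\,\textbf{e}(\textbf{j}_\lambda)\,b = a\,\textbf{e}(\textbf{j}_\lambda)\,\textbf{e}\,\textbf{e}(\textbf{j}_\lambda)\,b \in \vv\,\textbf{e}\,\vv.
\end{equation*}
Summing over all $\textbf{i} \in {^\theta}I^\nu$ then gives $1 \in \vv\textbf{e}\vv$, and hence $\vv = \vv\textbf{e}\vv$.

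There is essentially no obstacle here beyond correctly invoking Corollary \ref{corollary:every idemp is iso to a root partition idemp}: the real work was done in Lemma \ref{lemma:isomorphic idemps}, where the explicit elements $a$ and $b$ were constructed using $\sigma_k$'s and the generator $\pi$ to move every occurrence of $q^{\pm 1}$ into the correct branch $I^\pm$, and in the subsequent permutation (using that $\sigma_w^\rho \textbf{e}(\textbf{j}_\lambda)\sigma_w = \idemp$ when swapping entries that do not correspond to neighbouring vertices of $\Gamma_I$) which brings any $\textbf{i} \in I^{\tilde\nu^+} \cup I^{\tilde\nu^-}$ to a root-partition sequence. Once those witnesses are in hand, the present corollary is a formal consequence, so the proof is essentially a one-line assembly.
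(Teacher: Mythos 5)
Your proposal is correct and follows essentially the same route as the paper: invoke Corollary \ref{corollary:every idemp is iso to a root partition idemp} to get, for each $\idemp$ not a summand of $\textbf{e}$, elements $a \in \idemp\vv\eilambda{\lambda}$ and $b \in \eilambda{\lambda}\vv\idemp$ with $ab = \idemp$, so that $\idemp = a\,\eilambda{\lambda}\,b \in \vv\textbf{e}\vv$, and conclude $\vv = \vv\textbf{e}\vv$. Your extra step inserting $\eilambda{\lambda} = \eilambda{\lambda}\,\textbf{e}\,\eilambda{\lambda}$ merely makes explicit what the paper leaves implicit, namely that $\eilambda{\lambda}$ is a summand of $\textbf{e}$.
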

\begin{proof}
It is clear that $\vv\textbf{e}\vv \subseteq \vv$. To show $\vv \subseteq \vv\textbf{e}\vv$ it suffices to show $\idemp \in \vv\textbf{e}\vv$, for every idempotent $\idemp$ not a summand of $\textbf{e}$. Let $\idemp$ be such an idempotent. By Corollary \ref{corollary:every idemp is iso to a root partition idemp}, \hbox{$\idemp \cong \textbf{e}(\textbf{i}_\lambda)$} for some $\lambda \in \Pi^{\pm}$. That is, there exists $a \in \idemp\vv \eilambda{\lambda}$, $b \in \eilambda{\lambda} \vv \idemp$ such that 
\begin{equation*}
\idemp=ab=a\eilambda{\lambda} b \in \vv\textbf{e}\vv.
\end{equation*}
Then $\vv=\vv\textbf{e}\vv$ as required.
\end{proof}
Let $A$ and $\tilde{A}$ be the isomorphic path algebras of the following quivers, respectively.
\begin{center}
\begin{table}[h]
\begin{tabular}{cc}
\begin{minipage}{4cm}
\begin{tikzpicture}
\matrix (m) [matrix of math nodes, row sep=3em,
column sep=3em, text height=1.5ex, text depth=0.25ex]
{e_1 & e_2 \\};
\path[->]
(m-1-1) edge [bend left=30] node [above] {$u_1$} (m-1-2)
(m-1-2) edge [bend left=30] node [below] {$u_2$} (m-1-1);
\end{tikzpicture}
\end{minipage}
&
\begin{minipage}{4cm}
\begin{tikzpicture}
\matrix (m) [matrix of math nodes, row sep=3em,
column sep=3em, text height=1.5ex, text depth=0.25ex]
{a_1 & a_2 \\};
\path[->]
(m-1-1) edge [bend left=30] node [above] {$v_1$} (m-1-2)
(m-1-2) edge [bend left=30] node [below] {$v_2$} (m-1-1);
\end{tikzpicture}
\end{minipage}
\end{tabular}
\end{table}
\end{center}
$A$ and $\tilde{A}$ are graded $\textbf{k}$-algebras via
\begin{equation*}
\begin{split}
&\textrm{deg}(e_i)=\textrm{deg}(a_i)=0 \\
&\textrm{deg}(u_je_j)=\textrm{deg}(v_ja_j)=1
\end{split}
\end{equation*}
for $i,j \in \{ 1,2 \}$. 
\begin{theorem} \label{theorem:ME in case q in I mult one}
$\vv$ and $A^{\otimes (m-1)}\otimes \tilde{A}$ are Morita equivalent.
\end{theorem}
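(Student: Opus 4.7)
The strategy is the standard Morita equivalence argument: by Corollary \ref{corollary:e is full in vv}, the idempotent $\textbf{e}=\sum_{\lambda\in\Pi^\pm}\eilambda{\lambda}$ is full in $\vv$, so $\vv\textbf{e}$ is a progenerator for $\vv\textrm{-Mod}$ and $\vv$ is Morita equivalent to $\mathrm{End}_{\vv}(\vv\textbf{e})^{\mathrm{op}}\cong \textbf{e}\vv\textbf{e}$. The theorem therefore reduces to producing a graded $\textbf{k}$-algebra isomorphism $\psi: A^{\otimes(m-1)}\otimes\tilde{A}\longrightarrow \textbf{e}\vv\textbf{e}$.

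To construct $\psi$, I would first match primitive idempotents. The primitive idempotents of $A^{\otimes(m-1)}\otimes\tilde{A}$ are indexed by $(i_1,\ldots,i_m)\in\{1,2\}^m$, giving $2^m$ of them, while the primitive idempotents of $\textbf{e}\vv\textbf{e}$ are the $\eilambda{\lambda}$ for $\lambda\in\Pi^\pm$, also totalling $2^m$ via the bijection $\theta$ of (\ref{equation:bijection theta}) together with its $\Pi^-$ analog. I would identify $e_{i_1}\otimes\cdots\otimes e_{i_{m-1}}\otimes a_{i_m}$ with $\eilambda{\lambda}$ where $\theta(\lambda)=(i_1,\ldots,i_{m-1})$, taking $\lambda\in\Pi^+$ when $i_m=1$ and $\lambda\in\Pi^-$ when $i_m=2$. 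For the arrows in the $k$-th $A$-factor (for $k\leq m-1$), which toggle $a_k=1\leftrightarrow a_k=2$, I would assign specific products of $\sigma_j$'s that swap the positions of $p^{2k-2}q$ and $p^{2k}q$ (or their $q^{-1}$ analogs in the $\Pi^-$ branch) inside the sequence labelling the idempotent. For the arrows in the $\tilde{A}$-factor, which toggle the sign of the root partition, I would use elements involving $\pi$: to pass from $\lambda\in\Pi^+$ to its $\Pi^-$ partner one must flip the sign of each entry in turn, which is achieved by bringing each entry to position $1$, applying $\pi$, and moving it back. The $m$ polynomial generators $x_1,\ldots,x_m$ of $\vv$ at each $\eilambda{\lambda}$ should then correspond to the $m$ loops in the path algebras, one coming from each tensor factor.

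The main obstacle will be verifying that $\psi$, defined on generators as above and extended $\textbf{k}$-linearly and multiplicatively, is a well-defined algebra morphism. The nontrivial check is that images of generators coming from distinct tensor factors commute in $\textbf{e}\vv\textbf{e}$, so that the tensor structure is preserved; this uses the commutation and braid relations among the $\sigma_k$ from Definition \ref{definition:klr algebra and relations} together with the $\pi$-relations of Definition \ref{definition:vv algebra}. The hypothesis that $\nu$ has multiplicity one, combined with $p$ not being a root of unity, ensures that none of the degenerate relations (the $i_k=i_{k+1}$ case of $\sigma_k^2$, or the $i_k\leftrightarrow i_{k+1}$ case) ever arises in these computations, so only the "benign" instances of the relations intervene. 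Bijectivity of $\psi$ can then be established by a graded dimension computation on each block $\eilambda{\lambda}\vv\eilambda{\mu}$ via the basis theorem \ref{vv basis} and comparing with the matching corner of $A^{\otimes(m-1)}\otimes\tilde{A}$; surjectivity on the images of the generators together with the graded dimension equality forces $\psi$ to be an isomorphism, completing the Morita equivalence.
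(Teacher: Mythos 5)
Your proposal is correct and follows essentially the same route as the paper: the same full idempotent $\textbf{e}=\sum_{\lambda\in\Pi^\pm}\textbf{e}(\textbf{i}_\lambda)$ from Corollary \ref{corollary:e is full in vv}, the same matching of idempotents via the bijection $\theta$, the same assignment of $A$-arrows to elements $\textbf{e}(\textbf{i}_{\lambda'})\sigma_w\textbf{e}(\textbf{i}_\lambda)$ with $w\in\sm$ and of the $\tilde{A}$-arrows to $\sigma_\eta\textbf{e}(\textbf{i}_\lambda)$ with $\eta$ the longest coset representative, and the same surjectivity-plus-graded-dimension argument to get the isomorphism onto $\textbf{e}\vv\textbf{e}$. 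The only detail to flesh out is your claim that the $x_j$ ``correspond to the loops'': the loop images are $\pm x_{k}\textbf{e}(\textbf{i}_\lambda)$ at the position $k$ of $q^{\pm1}$ and differences $(x_j-x_{j'})\textbf{e}(\textbf{i}_\lambda)$ for quiver-adjacent entries, and one recovers every $x_j\textbf{e}(\textbf{i}_\lambda)$ from these by a telescoping sum, exactly as in the paper's surjectivity step.
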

\begin{proof}
Define a map 
\begin{equation*}
\phi:A^{\otimes (m-1)}\otimes \tilde{A} \longrightarrow \textbf{e}\vv\textbf{e}
\end{equation*}
as follows.
\begin{equation*}
\begin{array}{l}
\begin{array}{l l}
e_{j_1}\otimes \cdots \otimes e_{j_{m-1}}\otimes a_i \mapsto \eilambda{\lambda} & \textrm{for } \lambda \in \left\{
\begin{array}{l l}
    \Pi^+ \textrm{ if } i=1 \\
    \Pi^- \textrm{ if } i=2
\end{array} \right. \\
 & \textrm{and }\theta(\lambda)=(j_1,\ldots , j_{m-1})
\end{array} \\ \\
\begin{array}{l l}
e_{j_1}\otimes \cdots \otimes e_{j_k^\prime}u_\ell e_{j_k} \otimes \ldots \otimes e_{j_{m-1}}\otimes a_i \mapsto \eilambda{\lambda^{'}}\sigma_w \eilambda{\lambda} & \textrm{ for the unique} \\  &\textrm{ }w \in \sm \textrm{ such that } \\ 
 & \textrm{ }\theta(\lambda)=(j_1, \ldots , j_k, \ldots , j_{m-1}) \\
 & \textrm{ }\theta(\lambda^\prime)=(j_1, \ldots , j_k^\prime, \ldots , j_{m-1})
\end{array} \\ \\
\begin{array}{l l}
e_{j_1}\otimes \cdots \otimes e_{j_{m-1}}\otimes v_i a_i \mapsto \sigma_\eta \eilambda{\lambda} & \textrm{where }\eta \in \mathcal{D}(W^B_m/\sm)\textrm{ is the longest element}\\
 & \textrm{and }\theta(\lambda)=(j_1,\ldots , j_{m-1}).
\end{array}
\end{array}
\end{equation*}
Extend this map \textbf{k}-linearly and multiplicatively. Since $\phi$ is defined on the generators of $A^{\otimes (m-1)}\otimes \tilde{A}$, and the commutativity of elements in each tensor factor is preserved, the map $\phi$ is well-defined and is an algebra morphism. 
\\
\\
We claim that $\phi$ is an isomorphism of $\textbf{k}$-algebras. We first check surjectivity. Since there is a bijection between $\Pi(m)$ and $\Pi^+$ we have that $\phi$ is surjective on idempotents $\eilambda{\lambda}$. The map $\phi$ is also surjective on elements $\eilambda{\lambda^{'}}\sigma_w \eilambda{\lambda}$, $w \in W^B_m$. To see this, take any $\eilambda{\lambda^{'}} \sigma_w \eilambda{\lambda} \in \textbf{e} \vv \textbf{e}$. We have 
\begin{equation*}
\begin{split}
\theta(\lambda)&=(j_1, \ldots, j_{m-1}) \\
\theta(\lambda^{'})&=(j_1^{'}, \ldots, j_{m-1}^{'})
\end{split}
\end{equation*}
for some $(j_1, \ldots, j_{m-1})$, $(j_1^{'}, \ldots, j_{m-1}^{'}) \in \Pi(m)$. Then
\begin{equation*}
\phi(u_{j_1}^{\varepsilon_1}e_{j_1} \otimes u_{j_2}^{\varepsilon_2}e_{j_2} \otimes \cdots \otimes u_{j_{m-1}}^{\varepsilon_{m-1}}e_{j_{m-1}} \otimes v_i^{\varepsilon_i}a_i)=\eilambda{\lambda^{'}} \sigma_w \eilambda{\lambda},
\end{equation*}
where, for $1 \leq j \leq m-1$, 
\begin{equation*}
\begin{array}{l}
\varepsilon_j=\left\lbrace
\begin{array}{l l}
1 & \textrm{if } j_1 \neq j_1^{'} \\
0 & \textrm{if } j_1 = j_1^{'} 
\end{array} \right. 
\\ \\
\varepsilon_i = \left\lbrace
\begin{array}{l l}
0 & \textrm{if } \lambda, \lambda^{'} \in \Pi^+ \textrm{ or } \lambda, \lambda^{'} \in \Pi^- \\
1 & \textrm{else}.
\end{array} \right.
\end{array}
\end{equation*}
It remains to check surjectivity on the $x_j\eilambda{\lambda}$, $1 \leq j \leq m$.\\
\\
Take any $\lambda \in \Pi^\pm$ with $\theta(\lambda)=(j_1,\ldots,j_{m-1})$. For an entry $a \in \textbf{i}_\lambda$ let $\psi(a)$ denote its position in $\textbf{i}_\lambda$. Let us assume that $q$ is an entry of $\textbf{i}_\lambda$ since the same argument holds when $q^{-1}$ is an entry of $\textbf{i}_\lambda$. Then 
\begin{equation*}
\phi(e_{j_1}\otimes \cdots \otimes e_{j_{m-1}}\otimes \pi^2 a_i)=x_{\psi(q)}\eilambda{\lambda}. 
\end{equation*}
Now consider $x_j\eilambda{\lambda}$, for any $j\neq \psi(q)$. Let $i_j$ be the $j^{th}$ entry of $\textbf{i}_\lambda$. Note that 
\begin{equation*}
x_j\eilambda{\lambda}=(x_j-x_k+x_k)\eilambda{\lambda}
\end{equation*}
where $k=\psi(p^{-2}i_j)$ is the position of $p^{-2}i_j$ in $\textbf{i}_\lambda$. 
\\
\\
But note that
\begin{equation*}
(x_j-x_k)\eilambda {\lambda} = \sigma^\rho_w  \eilambda{\lambda^\prime} \sigma_w \eilambda{\lambda}
\end{equation*}
for some $w \in \sm$, where $\lambda^\prime \in \Pi^{\pm}$ is the root partition in which simple roots appear in the same order (possibly with different position) as in $\lambda$, except for $i_j$ and $p^{-2}i_j$. In particular, $\theta(\lambda)=(j_1, \ldots , j_k, \ldots , j_{m-1})$ and $\theta(\lambda^\prime)=(j_1, \ldots , j_k^\prime, \ldots j_{m-1})$ for some $1\leq k\leq m-1$. We can repeat this until we get
\begin{equation*}
x_j\eilambda{\lambda}=(x_j-x_k+x_k-\cdots -x_\ell+x_\ell)\eilambda{\lambda} 
\end{equation*}
where $\ell=\psi(q)$. So $x_j\eilambda{\lambda}=(\sigma^\rho_{w_1}\sigma_{w_1}+\cdots + \sigma^\rho_{w_k}\sigma_{w_k} + x_{\psi(q)})\eilambda{\lambda}$. Since $\phi$ is surjective on the $\eilambda{\lambda^\prime}\sigma_w\eilambda{\lambda}$ we have $\phi$ surjective on the $x_j\eilambda{\lambda}$. 
\\
\\
We complete the proof by comparing the graded dimensions of $A^{\otimes (m-1)}\otimes \tilde{A}$ and $\textbf{e}\vv \textbf{e}$ and showing that they are equal. 
\\
\\
Consider first the graded dimension of $A$. There are two elements in degree 0: $e_1$ and $e_2$. There are two elements in degree 1: $u_1 e_1$ and $u_2 e_2$. Indeed we have two elements in each degree and so dim$_q A=2+2q+2q^2+2q^3+\cdots =\frac{2}{1-q}$. Noting that $A\cong \tilde{A}$, we therefore have
\begin{equation*}
\textrm{dim}_q(A^{\otimes (m-1)}\otimes \tilde{A})=\frac{2^m}{(1-q)^m}.
\end{equation*}
\begin{claim} dim$_q(\textbf{e} \vv \textbf{e}) = \hspace{0.1em} \textrm{dim}_q(A^{\otimes (m-1)}\otimes \tilde{A})$.
\end{claim}
For every $w \in W^B_m$ fix a reduced expression of the form $w=\eta s$, for $\eta \in \mathcal{D}(W^B_m/\sm)$ and $s \in \sm$. 
\\
\\
First we show that $\phi$ is degree-preserving. This is clear on idempotents and $x_k\textbf{e}(\textbf{i}_\lambda)$. Now take $\eilambda{\lambda^{'}}\sigma_w \eilambda{\lambda}$, for some $w \in W^B_m$.
\begin{itemize}
\item Suppose first that $\lambda, \lambda^\prime$ both lie either in $\Pi^+$ or in $\Pi^-$. Then $w=s \in \sm$. Let \hbox{$s=s_{i_1}\cdots s_{i_k}$} be the fixed reduced expression for $s$ so that $\sigma_s\textbf{e}(\textbf{i}_\lambda)=\sigma_{i_1} \cdots \sigma_{i_k}\textbf{e}(\textbf{i}_\lambda)$. Now, deg$(\sigma_{i_j}\idemp)$ is 1 if $\sigma_{i_j}$ swaps $i_j$ and $p^{\pm 2} i_j$, and is 0 otherwise. Then deg$(\sigma_s\textbf{e}(\textbf{i}_\lambda))$ is the number of pairs $(i_j,p^{\pm 2}i_j)$ in $\textbf{e}(\textbf{i}_\lambda)$ which appear in the opposite order in $\textbf{e}(\textbf{i}_{\lambda^\prime})$. Letting $(a_1, \ldots, a_{m-1})_\lambda$, $(a_1, \ldots, a_{m-1})_{\lambda^\prime}$ be the elements of $\Pi(m)$ corresponding to $\lambda$, $\lambda^\prime \in \Pi^\pm$ respectively, we find deg$(\sigma_s \textbf{e}(\textbf{i}_\lambda))$ is the number of entries in $(a_1, \ldots, a_{m-1})_\lambda$ different to entries in $(a_1, \ldots, a_{m-1})_{\lambda^\prime}$, i.e. the number of $u_\ell$ appearing as tensorands in $\phi^{-1}(\eilambda{\lambda^{'}}\sigma_w \eilambda{\lambda})$.
\item Now suppose one of $\lambda, \lambda^\prime$ lies in $\Pi^+$ and the other in $\Pi^-$. Then $w=\eta s$, where $s \in \sm$ and $\eta \in \mathcal{D}(W^B_m/\sm)$ is the longest element, and $\sigma_w \eilambda{\lambda}=\sigma_\eta \sigma_s \eilambda{\lambda}$. Since we always have deg$(\sigma_\eta \eilambda{\lambda})=1$ it follows that $\textrm{deg}(\eilambda{\lambda^{'}}\sigma_w \eilambda{\lambda})=\textrm{deg}(\sigma_\eta\eilambda{\lambda^{''}})+\textrm{deg}(\sigma_s\eilambda{\lambda})=1+\textrm{deg}(\sigma_s\eilambda{\lambda})$, for some root partition $\lambda^{''}$.
\end{itemize}
Hence $\textrm{deg}(\eilambda{\lambda^{'}}\sigma_w\eilambda{\lambda})=\#\{ u_j, v_k \textrm{ appearing in }\phi^{-1}(\sigma_w\eilambda{\lambda}) \}$ and so $\phi$ is degree-preserving.\\
\\
This means we have a bijection 
\begin{equation*} 
\{ \eilambda{\lambda^{'}}\sigma_w \eilambda{\lambda} \mid \lambda^\prime, \lambda \in \Pi^\pm \} \longleftrightarrow \{ \gamma_1 \otimes \cdots \otimes \gamma_m \in A^{\otimes (m-1)}\otimes \tilde{A}\hspace{0.3em}|\hspace{0.3em} \textrm{deg }\gamma_i \leq 1 \hspace{1em} \forall i \}.
\end{equation*}
Put $\mathscr{A}:=A^{\otimes (m-1)}\otimes \tilde{A}$ and let $\mathscr{A}_{\textrm{deg}\leq 1}$ be the vector space $\langle \gamma_1 \otimes \cdots \otimes \gamma_m \mid \textrm{deg}\gamma_i \leq 1 \rangle$. Each tensorand has two elements in each degree. Then the graded dimension of $\mathscr{A}_{\textrm{deg}\leq 1}$ has a factor of $2^m$. In each degree $k$ we choose $k$ tensorands from a possible $m$. There are $2^m\binom{m}{0}$ elements in degree 0. There are $2^m\binom{m}{1}$ elements in degree 1. In degree $k$ there are $2^m\binom{m}{k}$ elements.
\\
\\
Then,
\begin{equation*}
\textrm{dim}_q \mathscr{A}_{\textrm{deg}\leq 1}= 2^m\sum_{k=0}^m \binom{m}{k}q^k = 2^m(1+q)^m=\sum_{\lambda,\lambda^{'}}q^{\textrm{deg}(\eilambda{\lambda^{'}}\sigma_w\eilambda{\lambda})}
\end{equation*}
where we have used the above bijection for the third equality. Then we have
\begin{equation*}
\textrm{dim}_q(\textbf{e}\vv\textbf{e})=\sum_{\lambda,\lambda^{'}}q^{\textrm{deg}(\eilambda{\lambda^{'}}\sigma_w\eilambda{\lambda})}\cdot \frac{1}{(1-q^2)^m}=\frac{2^m}{(1-q)^m}
\end{equation*}
and hence dim$_q(\textbf{e}\vv\textbf{e})=$ dim$_q(A^{\otimes(m-1)}\otimes \tilde{A})$ so that $\textbf{e}\vv\textbf{e}\cong A^{\otimes (m-1)}\otimes \tilde{A}$. This, together with the fact that $\textbf{e}$ is full in $\vv$ by Corollary \ref{corollary:e is full in vv}, proves Morita equivalence between $\vv$ and $A^{\otimes (m-1)}\otimes \tilde{A}$.
\end{proof}
We compare this result to a result of Brundan and Kleshchev. Pick $\tilde{\nu} \in \mathbb{N}I$, with multiplicity one, and $|\tilde{\nu}|=m$. Let $\subklr$ be the corresponding KLR algebra.
\begin{theorem} [\cite{Brundan}, Theorem 3.13]
$\subklr$ and $A^{\otimes (m-1)}\otimes_{\textbf{k}} \textbf{k}[x]$ are Morita equivalent.
\end{theorem}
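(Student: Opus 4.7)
The plan is to mirror the proof of Theorem~\ref{theorem:ME in case q in I mult one} almost verbatim, with the $\pi$-machinery stripped out since we are now in a pure KLR setting with no involution generator. Morally, the algebra $\subklr$ plays the role of the $+$-half $\subklr^+$ that appeared in the VV proof; the polynomial ring $\textbf{k}[x]$ replaces $\tilde{A}$ (whose two vertices there encoded the passage between $\Pi^+$ and $\Pi^-$ via $\pi$); and the $A^{\otimes(m-1)}$ factor still accounts for the $2^{m-1}$ root partitions of $\tilde{\nu}$ coming from independent choices of the $m-1$ adjacent-pair orderings. I would first reduce to the case where $\tilde{\nu}$ has connected support (otherwise $\subklr$ decomposes as a tensor product of KLR algebras on the components and the statement factors accordingly), then set $\textbf{e} = \sum_{\lambda \in \Pi(\tilde{\nu})} \textbf{e}(\textbf{i}_\lambda)$ and establish fullness of $\textbf{e}$ by the direct analogue of Corollary~\ref{corollary:every idemp is iso to a root partition idemp}: any $\textbf{i} \in I^{\tilde{\nu}}$ can be sorted into some $\textbf{i}_\lambda$ by a product $\sigma_w$ of simple transpositions that never swaps neighbours in $\Gamma$, so that $\sigma_w^\rho \sigma_w \textbf{e}(\textbf{i}) = \textbf{e}(\textbf{i})$ and $\textbf{e}(\textbf{i}) \cong \textbf{e}(\textbf{i}_\lambda)$ inside $\subklr$.

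Next I would define
\begin{equation*}
\phi \colon A^{\otimes(m-1)} \otimes_{\textbf{k}} \textbf{k}[x] \longrightarrow \textbf{e}\subklr\textbf{e}
\end{equation*}
on generators by $e_{j_1} \otimes \cdots \otimes e_{j_{m-1}} \otimes 1 \mapsto \textbf{e}(\textbf{i}_\lambda)$ with $\theta(\lambda) = (j_1, \ldots, j_{m-1})$ under the bijection of (\ref{equation:bijection theta}), $e_{j_1} \otimes \cdots \otimes u_\ell e_{j_k} \otimes \cdots \otimes e_{j_{m-1}} \otimes 1 \mapsto \textbf{e}(\textbf{i}_{\lambda'}) \sigma_w \textbf{e}(\textbf{i}_\lambda)$ for the unique $w \in \sm$ swapping the $k$-th adjacent pair, and $1^{\otimes(m-1)} \otimes x \mapsto x_1 \textbf{e}$ (with $x$ placed in degree $2$). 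Well-definedness is a direct check against the KLR relations, after which $\phi$ extends $\textbf{k}$-linearly and multiplicatively to an algebra homomorphism.

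The key step is to show $\phi$ is bijective; for this I would verify surjectivity and then compare graded dimensions. Surjectivity on idempotents and on elements of the form $\textbf{e}(\textbf{i}_{\lambda'}) \sigma_w \textbf{e}(\textbf{i}_\lambda)$ is immediate from the construction. The main obstacle is surjectivity onto the polynomial generators $x_j \textbf{e}(\textbf{i}_\lambda)$ for $j > 1$, which I would handle exactly as in the VV proof: the KLR relation $\sigma_r^2 \textbf{e}(\textbf{i}) = \pm(x_{r+1} - x_r) \textbf{e}(\textbf{i})$, valid whenever $i_r, i_{r+1}$ are joined by a single arrow, yields $(x_j - x_k)\textbf{e}(\textbf{i}_\lambda) = \sigma_w^\rho \textbf{e}(\textbf{i}_{\lambda'}) \sigma_w \textbf{e}(\textbf{i}_\lambda)$ for $k$ the position of a neighbour of $i_j$ in $\Gamma$, and iterating telescopes $x_j \textbf{e}(\textbf{i}_\lambda)$ back to $x_1 \textbf{e}(\textbf{i}_\lambda)$ plus terms already in the image. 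The graded-dimension computation is then parallel to the VV case: the left side is $\frac{2^{m-1}}{(1-q)^{m-1}(1-q^2)}$, while the same degree bookkeeping as in Theorem~\ref{theorem:ME in case q in I mult one} (each of the $m-1$ coordinates of $\theta$ independently contributes a factor $2+2q$) gives $\sum_{\lambda,\lambda' \in \Pi(\tilde{\nu})} q^{\deg(\sigma_w \textbf{e}(\textbf{i}_\lambda))} = (2+2q)^{m-1}$, so that $\mathrm{dim}_q(\textbf{e}\subklr\textbf{e}) = \frac{2^{m-1}(1+q)^{m-1}}{(1-q^2)^m}$, matching the left side. Morita equivalence of $\subklr$ and $A^{\otimes(m-1)} \otimes \textbf{k}[x]$ then follows from fullness of $\textbf{e}$ together with $\mathrm{End}_{\subklr}(\subklr \textbf{e}) \cong \textbf{e}\subklr\textbf{e}$.
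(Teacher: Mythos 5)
Two things to flag before anything else: the paper does not actually prove this statement, it only cites it from \cite{Brundan}, so your reconstruction by transporting the proof of Theorem \ref{theorem:ME in case q in I mult one} is a legitimate independent route, and your fullness argument and graded-dimension bookkeeping are essentially right. However, there is a concrete error at the heart of the map $\phi$: the assignment $1^{\otimes(m-1)}\otimes x \mapsto x_1\textbf{e}$ cannot define an algebra homomorphism. The element $1\otimes\cdots\otimes 1\otimes x$ is central in $A^{\otimes(m-1)}\otimes_{\textbf{k}}\textbf{k}[x]$, but $x_1\textbf{e}$ is not central in $\textbf{e}\subklr\textbf{e}$: already for $m=2$ (where $\textbf{e}=\textbf{e}(\lambda,p^2\lambda)+\textbf{e}(p^2\lambda,\lambda)=1$) one has $\sigma_1x_1\idemp=x_2\sigma_1\idemp\neq x_1\sigma_1\idemp$, so the required identity $\phi(u_\ell e_{j_k}\otimes\cdots)\phi(1\otimes\cdots\otimes x)=\phi(1\otimes\cdots\otimes x)\phi(u_\ell e_{j_k}\otimes\cdots)$ fails, and the ``direct check against the KLR relations'' you defer to would not go through. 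The role played in the VV proof by $\pi^2\mapsto x_{\psi(q)}\eilambda{\lambda}$ is played by the variable sitting at the position of a fixed \emph{vertex}, not at the fixed \emph{position} $1$. The correct choice is to fix a distinguished vertex $i_0$ of the support (say the smallest) and send $x\mapsto\sum_{\lambda\in\Pi(\tilde{\nu})}x_{\psi_\lambda(i_0)}\eilambda{\lambda}$, where $\psi_\lambda(i_0)$ denotes the position of $i_0$ in $\textbf{i}_\lambda$; this element is central in $\textbf{e}\subklr\textbf{e}$ precisely because multiplicity one forces $i_k\neq i_{k+1}$ everywhere, so relation (4) of Definition \ref{definition:klr algebra and relations} gives $\sigma_kx_l\idemp=x_{s_k(l)}\sigma_k\idemp$ with no correction term. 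Your telescoping step should then terminate at $x_{\psi_\lambda(i_0)}\eilambda{\lambda}$ rather than at $x_1\eilambda{\lambda}$ --- the differences $(x_j-x_k)\eilambda{\lambda}$ you produce move along the quiver from $i_j$ towards $i_0$, vertex by vertex, not along positions. With this repaired endpoint the surjectivity argument and your dimension count $\dim_q(\textbf{e}\subklr\textbf{e})=\frac{2^{m-1}}{(1-q)^{m-1}(1-q^2)}$ match, and the Morita equivalence follows as you say.

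Your opening ``reduction to connected support'' is also not a reduction. If the support of $\tilde{\nu}$ is disconnected the statement is simply false: for $\tilde{\nu}=\lambda+p^4\lambda$ the algebra $\subklr$ is isomorphic to $M_2(\textbf{k}[x_1,x_2])$, hence (graded) Morita equivalent to $\textbf{k}[x_1,x_2]$, which has a single graded simple module up to shift and no extensions between distinct simples, whereas $A\otimes_{\textbf{k}}\textbf{k}[x]$ has two graded simples with nontrivial extensions between them. (Note also that $\subklr$ for disconnected support is not literally a tensor product of the KLR algebras of the components, only Morita equivalent to one.) So connectedness of the support, as in the bijection \ref{equation:bijection theta} and the surrounding setting of the paper, must be taken as a standing hypothesis of the theorem rather than something one reduces to; once it is assumed, the rest of your argument, with the corrected image of $x$, mirrors Corollary \ref{corollary:every idemp is iso to a root partition idemp} and Theorem \ref{theorem:ME in case q in I mult one} correctly.
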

\subsubsection{Affine Quasi-Heredity}
The notion of quasi-heredity for finite-dimensional algebras was first defined by Cline, Parshall and Scott in 1987. The motivating reasons came from the study of highest weight categories arising in the representation theory of semisimple complex Lie algebras and algebraic groups. In 2014, Kleshchev introduced affine quasi-heredity and the definition of an affine highest weight category, see \cite{KleshchevAffineQH}. In particular, he proved an affine analogue of the Cline-Parshall-Scott Theorem; an algebra $A$ is affine quasi-hereditary if and only if the category $A$-mod of finitely generated graded $A$-modules is an affine highest weight category. In \cite{KleshchevAffineQH}, Kleshchev defines these notions in a more general setting, but for our purposes we take $\mathscr{B}$ to be the class of all positively graded polynomial algebras. The definitions that follow are taken from \cite{KleshchevAffineQH}.
\begin{defn}
A graded vector space $V$ is called \textbf{Laurentian} if it is locally finite-dimensional and bounded below. A graded algebra $A$ is Laurentian if it is Laurentian as a graded vector space.
\end{defn}
\begin{lemma}[\cite{KleshchevII}, Lemma 2.2]
Let $H$ be a Laurentian algebra. Then,
\begin{itemize}
\item[(i)] All irreducible $H$-modules are finite-dimensional.
\item[(ii)] $H$ is semiperfect (every finitely generated (graded) $H$-module has a (graded) projective cover); in particular, there are finitely many irreducible $H$-modules up to isomorphism and degree shift.
\end{itemize}
\end{lemma}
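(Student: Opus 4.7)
The plan is to exploit the grading structure and reduce both claims to statements about the finite-dimensional algebra $H_0$. After a degree shift we may assume $H$ is concentrated in non-negative degrees. Then $H_+ := \bigoplus_{i>0} H_i$ is a graded two-sided ideal, and $H_0 \cong H/H_+$ is finite-dimensional by local finite-dimensionality.

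For (i), let $M$ be a graded irreducible $H$-module. Pick any nonzero homogeneous $m \in M$. By irreducibility $M = H m$, so $M$ inherits boundedness below from $H$. Let $d = \min\{k : M_k \neq 0\}$. Since $H_+$ strictly raises degree, $H_+ M$ has no component in degree $d$ and is therefore a proper graded submodule. Irreducibility forces $H_+ M = 0$, so $M$ is a graded module over $H_0$. Each $M_k$ is an $H_0$-submodule of $M$ and hence also an $H$-submodule, so the graded irreducibility of $M$ forces it to be concentrated in a single degree, where it is an irreducible $H_0$-module. As $H_0$ is finite-dimensional, $M$ is finite-dimensional.

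For (ii), part (i) shows that graded irreducibles of $H$ are pulled back from irreducibles of $H_0$ (up to a degree shift), hence there are only finitely many up to isomorphism and shift. For semiperfection, let $\{\bar{e}_i\}$ be a complete set of primitive orthogonal idempotents of the finite-dimensional algebra $H_0$ (which is itself semiperfect). Under the inclusion $H_0 \hookrightarrow H$, these form a complete set of orthogonal idempotents in $H$, so it suffices to show each corner $e_i H e_i$ is local. As a graded vector space $e_i H e_i = e_i H_0 e_i \oplus e_i H_+ e_i$, and the positive-degree part $e_i H_+ e_i$ is a graded ideal. Applying (i) to the Laurentian algebra $e_i H e_i$, every graded irreducible module over $e_i H e_i$ is concentrated in a single degree, so any element of strictly positive degree annihilates all graded irreducibles and thus lies in the graded Jacobson radical. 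Therefore $e_i H e_i / J_{\mathrm{gr}}(e_i H e_i) \cong e_i H_0 e_i / J(e_i H_0 e_i)$, which is a division ring since $\bar{e}_i$ is primitive in the Artinian algebra $H_0$; so $e_i H e_i$ is local, and $H$ is graded semiperfect.

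The main obstacle I anticipate is the bookkeeping around the graded radical: one needs the graded Nakayama principle that on bounded-below graded modules an element of $H_+$ acts as a radical element, and one must verify that the corners $e_i H e_i$ remain Laurentian so that (i) applies to them (boundedness below is inherited; local finite-dimensionality follows because each graded piece of $e_iHe_i$ is a subspace of a graded piece of $H$). With these technicalities in place the argument reduces cleanly to the classical semiperfection of the finite-dimensional algebra $H_0$.
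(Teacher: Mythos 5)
Your reduction ``after a degree shift we may assume $H$ is concentrated in non-negative degrees'' is the weak point, and it is genuinely false: regrading an algebra by a shift is not an algebra grading (the identity must sit in degree $0$, and multiplicativity of the grading is destroyed), and a Laurentian algebra in the sense used here -- locally finite-dimensional and merely \emph{bounded below} -- can have nonzero components in negative degrees. A concrete example is $H=M_2(\mathbf{k})$ graded by $\deg e_{11}=\deg e_{22}=0$, $\deg e_{12}=1$, $\deg e_{21}=-1$: here $H_+=\mathbf{k}e_{12}$ is not a two-sided ideal (since $e_{21}e_{12}=e_{22}$), the unique graded simple module is the natural module, which is spread over two degrees and is not pulled back from $H_0$, and the positive-degree element $e_{12}$ is not in the (graded) radical, which is zero. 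So every step of your argument that leans on the non-negative grading -- ``$H_+M$ is a submodule,'' ``graded simples are concentrated in a single degree,'' ``positive-degree elements lie in the graded radical,'' and the identification $e_iHe_i/J_{\mathrm{gr}}\cong e_iH_0e_i/J(e_iH_0e_i)$ -- fails for general Laurentian $H$. Your proof is fine for non-negatively graded Laurentian algebras (which does cover the algebras this paper applies the lemma to, such as $A^{\otimes(m-1)}\otimes\tilde{A}$), but it does not prove the lemma as stated.

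The general case needs a different, and in fact easier, argument for (i): if $H_n=0$ for $n<-N$ and $M$ is graded irreducible with lowest nonzero degree $d$, then for any nonzero homogeneous $v\in M_m$ one has $M=Hv$, so $M_d=H_{d-m}v\neq 0$ forces $m\leq d+N$; hence $M$ lives in the degree window $[d,d+N]$ and is finite-dimensional by local finiteness. For (ii) one then observes that every homogeneous element of degree $>N$ annihilates every graded simple, so the two-sided ideal it generates lies in the graded Jacobson radical; consequently $H/J_{\mathrm{gr}}(H)$ is a quotient of an algebra concentrated in degrees $[-N,N]$, hence finite-dimensional, which gives finitely many simples up to shift, and semiperfectness follows by lifting idempotents modulo the radical (this is essentially the argument in the cited source). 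Note also that the paper itself offers no proof of this statement -- it is quoted verbatim from [KleshchevII, Lemma 2.2] -- so the comparison to be made is with Kleshchev's argument, which does not assume non-negativity; if you keep your corner-idempotent route, you must at least replace ``degree $>0$'' by ``degree $>N$'' and ``single degree'' by ``an interval of length $N$,'' and justify idempotent lifting or graded locality of the corners without using $H=H_0\oplus H_+$ as an ideal decomposition.
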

\begin{defn}
Let $A$ be a Noetherian Laurentian graded $\textbf{k}$-algebra. $A$ is said to be \textbf{connected} if $A_n=0$ for all $n<0$ and $A_0=\textbf{k}\cdot 1$. 
\end{defn}
For example, all algebras in $\mathscr{B}$ are connected.
\\
\\
For a left Noetherian Laurentian algebra $A$ let 
\begin{equation*}
\{ L(\pi) \mid \pi \in \Pi \}
\end{equation*}
be a complete irredundant set of simple $A$-modules up to isomorphism and degree shift. For each $\pi \in \Pi$, let $P(\pi)$ be the projective cover of $L(\pi)$. That is, $P(\pi)$ is a projective $A$-module and there is a surjection $\theta: P(\pi) \twoheadrightarrow L(\pi)$ with ker$(\theta)$ negligible, i.e. whenever $N \subset P(\pi)$ is a submodule with $N+\textrm{ker}(\theta)=P(\pi)$, then $N=P(\pi)$.\\
\\
We let $q$ be both a formal variable and also a degree shift functor. If $V=\bigoplus_{n \in \mathbb{Z}}V_n$ then $(qV)_n=V_{n-1}$.
For the remainder of this section all algebras will be Noetherian, Laurentian and graded and we will only consider finitely generated modules over these algebras.
\begin{defn} \label{definition:affine quasi-hereditary}
A two-sided ideal $J \subseteq A$ is an \textbf{affine heredity ideal} if
\begin{itemize}
\item[(SI1)] Hom$_A(J,A/J)=0$.
\item[(SI2)] As a left module, $J\cong m(q)P(\pi)$ for some graded multiplicity $m(q) \in \mathbb{Z}[q,q^{-1}]$ and some $\pi \in \Pi$, such that $B_\pi := \textrm{End}_A(P(\pi))^{\textrm{op}} \in \mathscr{B}$.
\item[(PSI)] As a right $B_\pi$-module, $P(\pi)$ is finitely generated and flat.
\end{itemize}
\end{defn}
\begin{remark}
For a connected algebra, a finitely generated module is flat if and only if it is free. For $B_\pi$ connected, the (PSI) condition can be reformulated: as a right $B_\pi$-module, $P(\pi)$ is free finite rank.
\end{remark}
\begin{lemma}[\cite{KleshchevAffineQH}, Lemma 6.5] \label{lemma:from Kleshchev}
Let $J$ be an ideal in the algebra $A$ such that the left $A$-module ${_A}J$ is projective. Then the condition (SI1) is equivalent to the condition $J^2=J$, which in turn is equivalent to $J=AeA$ for an idempotent $e \in A$.
\end{lemma}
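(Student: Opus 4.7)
The plan is to establish the cycle $(c) \Rightarrow (b) \Rightarrow (a) \Rightarrow (c)$; the first two arrows will not require projectivity. For $(c) \Rightarrow (b)$, if $J = AeA$ then $J^2 \subseteq J$ is immediate, and any generator $aeb \in AeA$ factors as $(ae)(eb) \in (AeA)(AeA) = J^2$ using $e^2 = e$. For $(b) \Rightarrow (a)$, any $f \in \textrm{Hom}_A(J, A/J)$ vanishes on $J^2 = J$: writing $x = \sum_i a_i b_i$ with $a_i, b_i \in J$ gives $f(x) = \sum_i a_i f(b_i)$, and since the left ideal $J$ annihilates $A/J$ we obtain $f(x) = 0$.

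The main content is $(a) \Rightarrow (b)$, where projectivity enters. Applying $\textrm{Hom}_A(J, -)$ to the short exact sequence $0 \to J \to A \to A/J \to 0$ and using $\textrm{Ext}^1_A(J, J) = 0$ yields a short exact sequence
\[
0 \to \textrm{Hom}_A(J, J) \to \textrm{Hom}_A(J, A) \to \textrm{Hom}_A(J, A/J) \to 0,
\]
so (a) is equivalent to saying that every $A$-linear map $J \to A$ has image contained in $J$. Since $A$ is Noetherian and $J$ is (finitely generated) projective, the dual basis lemma provides $x_1, \ldots, x_n \in J$ and $f_1, \ldots, f_n \in \textrm{Hom}_A(J, A)$ with $x = \sum_i f_i(x) x_i$ for every $x \in J$. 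The previous observation forces $f_i(x) \in J$, so $x \in J \cdot J = J^2$, and we deduce $J = J^2$.

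For the remaining implication $(b) \Rightarrow (c)$ I would use that $A$, being Noetherian and Laurentian, is semiperfect, so the finitely generated projective module $_A J$ splits as $\bigoplus_\pi m_\pi(q) A e_\pi$ with $e_\pi$ primitive idempotents and $m_\pi(q) \in \mathbb{Z}_{\geq 0}[q, q^{-1}]$. Selecting pairwise orthogonal representatives of these idempotent classes (an Azumaya-type argument valid in the graded semiperfect setting) yields an idempotent $e \in A$ such that $J \cong Ae$ as left $A$-modules; composing this isomorphism with the inclusion $J \hookrightarrow A$ realises $J$ as a submodule of the form $Aey$ for some $y \in eA$, and the two-sidedness of $J$ combined with $J = J^2$ allows one to refine the choice of $e$ so that $J = AeA$. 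This last refinement is the main obstacle: the abstract isomorphism $J \cong Ae$ from semiperfect-ness is not automatically compatible with the specific embedding of $J$ into $A$, and producing an idempotent that actually generates $J$ as a two-sided ideal requires careful bookkeeping between the left module decomposition and the bimodule structure coming from $J$ being two-sided and idempotent.
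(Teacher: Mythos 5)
First, a remark on the comparison you ask for: the paper does not prove this statement at all --- it is quoted verbatim from Kleshchev (\cite{KleshchevAffineQH}, Lemma 6.5) --- so there is no in-paper argument to measure yours against, and your proposal has to stand on its own. Your implications $(c)\Rightarrow(b)$, $(b)\Rightarrow(a)$ and $(a)\Rightarrow(b)$ are correct as written. In particular, using $\mathrm{Ext}^1_A(J,J)=0$ to identify (SI1) with the assertion that every $A$-linear map $J\to A$ has image inside $J$, and then feeding this into the dual basis lemma, is exactly the right mechanism; finite generation of $J$ is supplied by the standing Noetherian hypothesis on $A$.

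The gap is in $(b)\Rightarrow(c)$, and it is a genuine one rather than bookkeeping. The claim that semiperfectness produces an idempotent $e$ with $J\cong Ae$ as left modules is false in general: $J$ decomposes as $\bigoplus_\pi m_\pi(q)\,Ae_\pi$ and the multiplicities $m_\pi(q)$ need not be bounded by those occurring in the regular module, so $J$ need not be cyclic projective, and the subsequent attempt to realise $J$ as $Aey$ cannot work. The correct route is through the trace ideal. Set $\tau(J):=\sum_{f\in\mathrm{Hom}_A(J,A)}\mathrm{im}(f)$, a two-sided ideal of $A$. The inclusion $J\hookrightarrow A$ is one such $f$, so $J\subseteq\tau(J)$; and since you have already established that (a) holds and hence that every $f\colon J\to A$ has image in $J$, also $\tau(J)\subseteq J$, whence $J=\tau(J)$. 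The trace ideal of a finite direct sum is the sum of the trace ideals and is insensitive to degree shifts and to multiplicities, and $\tau(Ae_\pi)=Ae_\pi A$ because every homomorphism $Ae_\pi\to A$ is right multiplication by an element of $e_\pi A$. Choosing pairwise orthogonal primitive idempotents $e_\pi$, one for each isomorphism class (up to shift) of indecomposable summand of $J$, and setting $e=\sum_\pi e_\pi$, you get $J=\tau(J)=\sum_\pi Ae_\pi A=AeA$. This supplies the ``refinement'' you correctly flag as the missing step.
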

\begin{lemma}[\cite{KleshchevAffineQH}, Lemma 6.6] \label{lemma:from Kleshchev II}
Let $J\subseteq A$ be an affine heredity ideal. Write $J=AeA$ for an idempotent $e$, according to Lemma \ref{lemma:from Kleshchev}. Then the natural map $Ae \otimes_{eAe} eA \longrightarrow J$ is an isomorphism. Moreover, we may choose an idempotent $e$ to be primitive so that, using the notation of Definition \ref{definition:affine quasi-hereditary}, we have $Ae \cong P(\pi)$ and $B_\pi \cong eAe$.
\end{lemma}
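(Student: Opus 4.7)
The multiplication map $\mu\colon Ae\otimes_{eAe}eA\to J$, $ae\otimes eb\mapsto aeb$, is well-defined because the tensor product is balanced over $eAe$, and surjective because every element of $J=AeA$ is a sum of products of the form $aea'$. The real content is therefore injectivity of $\mu$ together with the primitivity refinement, and I expect injectivity to be the main obstacle.

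For injectivity, my plan is to exploit the (PSI) condition in Definition \ref{definition:affine quasi-hereditary}: $Ae\cong P(\pi)$ is flat, and in fact free of finite rank, as a right $eAe$-module (free because $B_\pi\in\mathscr{B}$ is connected). First observe that $e\in AeA=J$ because $A$ is unital, so $eA\subseteq J$, and conversely $eJ=eAeA\supseteq eA$, giving $eJ=eA$. Applying the exact functor $e(-)\cong\operatorname{Hom}_A(Ae,-)$ to $\mu$ yields the map $eAe\otimes_{eAe}eA\to eJ=eA$, which is the canonical isomorphism. I then upgrade this to injectivity of $\mu$ itself: pick a right $eAe$-basis $f_1,\ldots,f_n$ of $Ae$, so that $Ae\otimes_{eAe}eA=\bigoplus_j f_j\cdot eA$, and the claim becomes that $\sum_j f_j\gamma_j=0$ in $A$ (with $\gamma_j\in eA$) forces each $\gamma_j=0$. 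This is the technical heart: multiplying such a relation on the left by elements of the form $ebe$, unwinding via the right $eAe$-module structure, and using freeness of $Ae$ over $eAe$ reduces it to the already-established isomorphism $eAe\otimes_{eAe}eA\cong eA$, forcing all $\gamma_j$ to vanish.

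For the primitivity refinement, I start from any $e$ with $J=AeA$ and decompose $e=e_1+\cdots+e_k$ into primitive orthogonal idempotents of $A$, so that $Ae=\bigoplus_i Ae_i$ is a direct sum of indecomposable projectives. By (SI2), $J\cong m(q)P(\pi)$, and the image of $e$ in the semisimple top $J/\operatorname{rad}J\cong m(q)L(\pi)$ must be nonzero (otherwise Nakayama's lemma applied to $AeA\subseteq J$ forces $AeA\subsetneq J$, contradicting the choice of $e$); hence at least one $e_i$ has nonzero image there, forcing $Ae_i\cong P(\pi)$ up to a degree shift by Krull--Schmidt. Replacing $e$ by this primitive $e_i$ (absorbing the degree shift), the equality $Ae_iA=J$ follows by the same Nakayama argument, since $Ae_iA$ is a two-sided ideal of $J$ whose image in the top $m(q)L(\pi)$ already exhausts a generator. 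Finally, with $e$ primitive and $Ae\cong P(\pi)$, the standard isomorphism $\operatorname{End}_A(Ae)^{\mathrm{op}}\cong eAe$, $\varphi\mapsto\varphi(e)$, gives $B_\pi\cong eAe$, completing the proof.
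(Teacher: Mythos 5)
This lemma is not proved in the paper at all: it is quoted verbatim from Kleshchev (\cite{KleshchevAffineQH}, Lemma 6.6), so your attempt can only be measured against the standard argument there. Your surjectivity remark, the identity $eJ=eA$, the computation after applying the exact functor $e(-)\cong \mathrm{Hom}_A(Ae,-)$, and the primitivity refinement (semiperfectness, Krull--Schmidt, and $\mathrm{End}_A(Ae)^{\mathrm{op}}\cong eAe$) are all sound in outline. The genuine gap is the injectivity step, for two reasons. First, there is a circularity: you invoke (PSI) to extract a finite right $eAe$-basis of $Ae$, but (PSI) is a statement about $P(\pi)$ over $B_\pi$, and the identifications $Ae\cong P(\pi)$, $eAe\cong B_\pi$ are exactly the ``moreover'' clause, available only after $e$ has been replaced by a primitive idempotent; for the given $e$ with $J=AeA$, the module $Ae$ is a finite direct sum of shifted copies of $P(\pi)$, hence projective but in general not free over $eAe$. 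Second, and more seriously, the ingredients you actually use cannot force injectivity, because nowhere do you use the projectivity of ${}_AJ$ coming from (SI2), which is the decisive hypothesis. Concretely, let $A$ be the path algebra of the quiver with vertices $1,2$ and arrows $\alpha\colon 1\to 2$, $\beta\colon 2\to 1$, modulo $\alpha\beta=\beta\alpha=0$, and let $e=e_1$. Then $eAe=\mathbf{k}$, $Ae$ is free of finite rank over $eAe$, $eJ=eA$, and $e(Ae\otimes_{eAe}eA)\to eA$ is the canonical isomorphism -- all of your intermediate facts hold -- yet $\dim_{\mathbf{k}}(Ae\otimes_{eAe}eA)=4$ while $\dim_{\mathbf{k}}(AeA)=3$, the kernel being spanned by $\alpha\otimes\beta$. (Of course $AeA$ is not an affine heredity ideal here: it is not projective as a left module.) So the plan of ``multiplying on the left by elements $ebe$ and using freeness'' cannot be completed; no argument using only those inputs can.

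The standard repair, which is essentially Kleshchev's proof, is short: by (SI2) the left module ${}_AJ\cong m(q)P(\pi)$ is projective, so the surjection $\mu\colon Ae\otimes_{eAe}eA\to J$ splits and $Ae\otimes_{eAe}eA=s(J)\oplus K$ with $K=\ker\mu$. Applying the exact functor $e(-)$ and your computation $eJ=eA$ shows $e\mu$ is an isomorphism, hence $eK=0$. Since $M:=Ae\otimes_{eAe}eA$ is generated as a left $A$-module by $e\otimes eA\subseteq eM$, the $A$-linear projection $\pi_K$ onto $K$ gives $K=\pi_K(M)=A\,\pi_K(eM)\subseteq A\cdot eK=0$, so $\mu$ is injective. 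Your primitivity argument can then be kept essentially as written (each $e_i$ in a primitive orthogonal decomposition of $e$ satisfies $e_i\in Ae\subseteq J$, so $Ae_i$ is a summand of $J\cong m(q)P(\pi)$ and is a shifted $P(\pi)$; since every summand $Ae_j$ of $Ae$ is a homomorphic image of $Ae_i$ inside $A$, one gets $Ae\subseteq Ae_iA$ and hence $Ae_iA=J$), and the final isomorphism $B_\pi\cong eAe$ is the standard one you state.
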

\begin{defn}
An algebra $A$ is \textbf{affine quasi-hereditary} if there exists a finite chain of ideals 
\begin{equation*}
(0)=J_0\subsetneq J_1 \subsetneq \cdots \subsetneq J_n=A
\end{equation*}
with $J_{i+1}/J_i$ an affine heredity ideal in $A/J_i$, for all $0 \leq i < n$. Such a chain of ideals is called an \textbf{affine heredity chain}.
\end{defn}
\begin{proposition} \label{proposition:tensor product of 2 a.q.h algebras is a.q.h}
If $A$ and $B$ are two affine quasi-hereditary $\textbf{k}$-algebras then the tensor product $A \otimes_{\textbf{k}} B$ is affine quasi-hereditary.
\end{proposition}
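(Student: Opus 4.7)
The plan is to construct an explicit affine heredity chain for $A \otimes_{\textbf{k}} B$ by lexicographically refining the given chains for the two factors. Fix affine heredity chains $(0) = I_0 \subsetneq I_1 \subsetneq \cdots \subsetneq I_m = A$ and $(0) = J_0 \subsetneq J_1 \subsetneq \cdots \subsetneq J_n = B$, and for $0 \leq i \leq m-1$ and $0 \leq j \leq n$ define
\[
K_{i,j} := (I_i \otimes B) + (I_{i+1} \otimes J_j),
\]
a two-sided ideal of $A \otimes B$. One checks directly that $K_{0,0} = 0$, $K_{m-1,n} = A \otimes B$, and $K_{i,n} = I_{i+1} \otimes B = K_{i+1,0}$, producing a strictly increasing ideal chain from $0$ up to $A \otimes B$ ordered lexicographically in the index $(i,j)$.

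Next I would identify the consecutive quotients. Writing $\bar A := A/I_i$, $\bar B := B/J_j$, $\bar J := I_{i+1}/I_i$ and $\bar J' := J_{j+1}/J_j$, an exact-sequence argument exploiting flatness over the field $\textbf{k}$ (used in particular to see that $(I_{i+1} \otimes J_{j+1}) \cap (I_i \otimes B) = I_i \otimes J_{j+1}$) yields a bimodule isomorphism
\[
K_{i,j+1}/K_{i,j} \;\cong\; \bar J \otimes \bar J'.
\]
A short computation using $I_{i+1} \cdot I_i \subseteq I_i$ and $J_{j+1} \cdot J_j \subseteq J_j$ shows that this ideal, viewed inside $(A \otimes B)/K_{i,j}$, is annihilated on both sides by the kernel of the canonical surjection $(A \otimes B)/K_{i,j} \twoheadrightarrow \bar A \otimes \bar B$. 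It therefore suffices to prove that $\bar J \otimes \bar J'$ is an affine heredity ideal in $\bar A \otimes \bar B$, and then transfer the property back along the surjection.

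To verify heredity in $\bar A \otimes \bar B$, I would apply Lemma \ref{lemma:from Kleshchev II} to write $\bar J = \bar A \bar e \bar A$ and $\bar J' = \bar B \bar f \bar B$ for primitive idempotents $\bar e, \bar f$ with $\bar e \bar A \bar e, \bar f \bar B \bar f \in \mathscr{B}$. Then $\bar J \otimes \bar J' = (\bar A \otimes \bar B)(\bar e \otimes \bar f)(\bar A \otimes \bar B)$; the endomorphism ring $(\bar e \otimes \bar f)(\bar A \otimes \bar B)(\bar e \otimes \bar f) = (\bar e \bar A \bar e) \otimes (\bar f \bar B \bar f)$ is a polynomial ring over $\textbf{k}$ and hence lies in $\mathscr{B}$; the module $(\bar A \otimes \bar B)(\bar e \otimes \bar f) = \bar A \bar e \otimes \bar B \bar f$ is the projective cover of the simple $(\bar A \otimes \bar B)$-module $L(\pi) \boxtimes L(\tau)$; and (PSI) follows by tensoring the free finite-rank decompositions of $\bar A \bar e$ over $\bar e \bar A \bar e$ and of $\bar B \bar f$ over $\bar f \bar B \bar f$. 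Condition (SI1) is immediate from Lemma \ref{lemma:from Kleshchev} together with the identity $(\bar J \otimes \bar J')^2 = \bar J^2 \otimes (\bar J')^2 = \bar J \otimes \bar J'$.

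The principal obstacle is the transfer step: ensuring that affine heredity of $\bar J \otimes \bar J'$ in $\bar A \otimes \bar B$ implies affine heredity of the corresponding ideal inside the strictly larger quotient ring $(A \otimes B)/K_{i,j}$. Concretely, one must verify that the projective cover of $L(\pi) \boxtimes L(\tau)$ computed in $(A \otimes B)/K_{i,j}$ agrees with the one computed in $\bar A \otimes \bar B$; this can be done by lifting $\bar e \otimes \bar f$ to an idempotent $e \otimes f$ of $A \otimes B$ and checking that the resulting projective $(A \otimes B)/K_{i,j}$-module is annihilated by the kernel of the surjection, so that the three conditions (SI1), (SI2), (PSI) pass intact along the quotient map.
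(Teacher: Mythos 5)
Your proposal is correct and follows essentially the same route as the paper: the chain you build from $K_{i,j} = (I_i \otimes B) + (I_{i+1}\otimes J_j)$ is exactly the paper's lexicographic refinement, the subquotients are identified with $(I_{i+1}/I_i)\otimes(J_{j+1}/J_j)$ in the same way, and the verification of (SI1), (SI2), (PSI) via the idempotent $e\otimes f$ and Lemmas \ref{lemma:from Kleshchev} and \ref{lemma:from Kleshchev II} matches the paper's argument. The ``transfer step'' you flag as the principal obstacle is precisely the point the paper handles by applying the right-exact functor $-\otimes_{A\otimes B}(Ae\otimes Bf)$ to a short exact sequence and observing that the kernel term dies because $Ae\subseteq A_1$; your observation that the kernel of $(A\otimes B)/K_{i,j}\twoheadrightarrow \bar A\otimes\bar B$ annihilates the projective generated by $e\otimes f$ (using $e\in I_{i+1}$, $f\in J_{j+1}$) is the same computation, so the only real difference is that the paper organizes the bookkeeping as an induction on the length of the chain for $A$ while you treat all subquotients uniformly.
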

\begin{proof}
Let,
\begin{equation} \label{equation:affine heredity chains}
\begin{split}
&(0)=A_0 \subset A_1 \subset \cdots \subset A_n = A \\
&(0)=B_0 \subset B_1 \subset \cdots \subset B_m = B
\end{split}
\end{equation}
be affine heredity chains for $A$ and $B$, respectively. We claim that 
\begin{equation*}
(0) \subset A_1 \otimes B_1 \subset A_1 \otimes B_2 \subset \cdots \subset A_1 \otimes B \subset A_2 \otimes B_1 + A_1 \otimes B \subset A_2 \otimes B_2 + A_1 \otimes B \subset \cdots \subset A \otimes B
\end{equation*}
is an affine heredity chain. We fix $m \geq 1$ and proceed by induction on $n$.
\\
\\
When $n=1$, $A$ has affine heredity chain $(0) = A_0 \subset A_1 = A$ and we claim that $A \otimes B$ has affine heredity chain,
\begin{equation*}
(0) \subset A \otimes B_1 \subset A \otimes B_2 \subset \cdots \subset A \otimes B_m = A \otimes B.
\end{equation*}
We must show that each $(A \otimes B_i)/(A \otimes B_{i-1}) \subset (A \otimes B)/(A \otimes B_{i-1})$, for $1 \leq i \leq m$, is an affine heredity ideal. Note that, $(A \otimes B_i)/(A \otimes B_{i-1}) \cong A \otimes B_i/B_{i-1}$ and $(A \otimes B)/(A \otimes B_{i-1}) \cong A \otimes B/B_{i-1}$. Since $A$ and $B_i/B_{i-1}$ are both idempotent ideals, we know that $A \otimes B_i/B_{i-1}$ is an idempotent ideal. Then, by Lemma \ref{lemma:from Kleshchev}, to show (SI1) it suffices to show that $A \otimes B_i/B_{i-1}$ is projective as a left $A \otimes B/B_{i-1}$-module.
\\
\\
Since $A \subset A$ is an affine heredity ideal we have $A \cong m(q)P(\pi) \in A$-proj, for some graded multiplicity $m(q) \in \mathbb{Z}[q,q^{-1}]$,  such that $B_\pi := \textrm{End}(P(\pi))^{\textrm{op}} \in \mathscr{B}$. By Lemma \ref{lemma:from Kleshchev II}, $P(\pi) \cong Ae$ for some idempotent $e \in A$. Then we have $B_\pi \cong eAe \in \mathscr{B}$.
\\
\\
Similarly, since $B_i/B_{i-1} \subset B/B_{i-1}$ is an affine heredity ideal we have $B_i/B_{i-1} \cong n(q)Q(\sigma) \in B/B_{i-1}$-proj, for some graded multiplicity $n(q) \in \mathbb{Z}[q,q^{-1}]$, such that $B_\sigma := \textrm{End}(Q(\sigma))^{\textrm{op}} \in \mathscr{B}$. Again by Lemma \ref{lemma:from Kleshchev II}, $Q(\sigma) \cong B/B_{i-1} \overbar{f}$ for some idempotent $\overbar{f} \in B/B_{i-1}$. Then we have $B_\sigma \cong \overbar{f}B/B_{i-1}\overbar{f} \in \mathscr{B}$.
\\
\\
Hence, $A \otimes B_i/B_{i-1} \cong m(q)n(q)\cdot P(\pi)\otimes Q(\sigma) \in A \otimes B/B_{i-1}$-proj, so that (SI1) holds. Moreover,
\begin{equation*}
P(\pi) \otimes Q(\sigma) \cong Ae \otimes B/B_{i-1}\overbar{f} = A \otimes B/B_{i-1} \cdot e \otimes \overbar{f}.
\end{equation*}
So $B_{\pi, \sigma}:=\mathrm{End}_{A \otimes B/B_{i-1}}(A \otimes B/B_{i-1} \cdot e \otimes \overbar{f})^{\textrm{op}} \cong eAe \otimes \overbar{f}B/B_{i-1}\overbar{f} \in \mathscr{B}$, so (SI2) also holds. We have so far shown that (SI1) and (SI2) hold. It remains to show (PSI). We know that $P(\pi)$ is free finite rank as a right $B_\pi$-module and that $Q(\sigma)$ is free finite rank as a right $B_\sigma$-module. That is, $P(\pi) \cong (eAe)^k$ and $Q(\sigma) \cong (\overbar{f}B/B_{i-1}\overbar{f})^l$, for some $k,l \in \mathbb{N}$. Hence,
\begin{equation*}
P(\pi) \otimes Q(\sigma) \cong (eAe \otimes \overbar{f}B/B_{i-1}\overbar{f})^{kl}.
\end{equation*}
Then, as a right $B_{\pi, \sigma}$-module, $P(\pi) \otimes Q(\sigma)$ is free finite rank and the statement of the theorem holds for $n=1$.
\\
\\
Suppose now that the result holds whenever $A$ has an affine heredity chain of length $k<n$. Then let $A$ and $B$ be affine quasi-hereditary with affine heredity chains \ref{equation:affine heredity chains}. One can easily verify that 
\begin{equation*}
(0) \subset A_2/A_1 \subset \cdots \subset A_n/A_1 = A/A_1
\end{equation*}
is an affine heredity chain. It is a chain of length $n-1 < n$ so, using induction, the statement of the theorem holds for $(A/A_1) \otimes B$. Hence it suffices to prove that $(A_1 \otimes B_i)/(A_1 \otimes B_{i-1}) \subset (A \otimes B)/(A_1 \otimes B_{i-1})$, for $1 \leq i \leq m$, is an affine heredity ideal. Note that $(A_1 \otimes B_i)/(A_1 \otimes B_{i-1}) \cong A_1 \otimes (B_i/B_{i-1})$. 
\\
\\
$A_1$ and $B_i/B_{i-1}$ are both idempotent ideals so that $A_1 \otimes B_i/B_{i-1}$ is an idempotent ideal. Then, by Lemma \ref{lemma:from Kleshchev}, to show (SI1) it suffices to show that $A_1 \otimes B_i/B_{i-1}$ is a projective $A \otimes B/A_1 \otimes B_{i-1}$-module. Firstly, note that $A_1 \otimes (B/B_{i-1}) \in (A\otimes B)/(A_1 \otimes B_{i-1})$-proj because
\begin{equation*}
A_1 \otimes (B/B_{i-1}) \cong ((A \otimes B)/(A_1 \otimes B_{i-1})) \otimes_{A \otimes B} A_1 \otimes B
\end{equation*}
and, since $A_1 \otimes B \in A \otimes B$-proj, it follows that $A_1 \otimes (B/B_{i-1})$ is a direct summand of a free $(A\otimes B)/(A_1 \otimes B_{i-1})$-module. Then (SI1) is satisfied. We now show (SI2) holds.
\\
\\
Note first that $A_1 \subset A$ is an affine heredity ideal. So 
\begin{equation*}
A_1 \cong m(q)P(\pi) \cong m(q)Ae,
\end{equation*}
for some $\pi \in \Pi$, graded multiplicity $m(q) \in \mathbb{Z}[q,q^{-1}]$, and $e \in A$ a primitive idempotent, where we have used Lemma \ref{lemma:from Kleshchev II}, for the second isomorphism. Similarly, $B_i/B_{i-1} \subset B/B_{i-1}$ is an affine heredity ideal so
\begin{equation*}
B_i/B_{i-1} \cong n(q)Q(\sigma) \cong n(q) B/B_{i-1}\cdot \overbar{f},
\end{equation*}
for some $\sigma \in \Sigma$, graded multiplicity $n(q) \in \mathbb{Z}[q,q^{-1}]$, and $\overbar{f}=f+B_{i-1}$ a primitive idempotent, again using Lemma \ref{lemma:from Kleshchev II}, for the second isomorphism. Then,
\begin{equation*}
\begin{split}
A_1 \otimes B_i/B_{i-1} &\cong m(q)n(q)P(\pi)\otimes Q(\sigma) \\
&\cong m(q)n(q) A \otimes B/B_{i-1}\cdot (e \otimes \overbar{f}) \in (A\otimes B)/(A \otimes B_{i-1})\mathrm{-proj},
\end{split}
\end{equation*}
and is the projective cover of a simple $A \otimes (B/B_i)$-module. However, we need these facts for $(A \otimes B)/(A_1 \otimes B_{i-1})$. There is a short exact sequence of $A \otimes B$-bimodules,
\begin{equation*}
0\longrightarrow ker(g) \longrightarrow (A \otimes B)/(A_1 \otimes B_{i-1}) \overset{g}\longrightarrow (A \otimes B)/(A \otimes B_{i-1}) \longrightarrow 0,
\end{equation*}
where $ker(g) \cong (A \otimes B_{i-1})/(A_1 \otimes B_{i-1}) \cong (A/A_1)\otimes B_{i-1}$. So the short exact sequence is
\begin{equation*}
0\longrightarrow (A/A_1)\otimes B_{i-1} \longrightarrow (A \otimes B)/(A_1 \otimes B_{i-1}) \overset{g}\longrightarrow (A \otimes B)/(A \otimes B_{i-1}) \longrightarrow 0.
\end{equation*}
We now apply the functor $- \otimes_{A \otimes B} (Ae \otimes Bf)$, which we know to be right exact. But $(A/A_1)\otimes B_{i-1}$ vanishes under this functor, since $Ae \subset A_1$. This means we obtain an isomorphism,
\begin{equation*}
(A \otimes B)/(A_1 \otimes B_{i-1})\cdot (e \otimes f) \cong (A \otimes B)/(A \otimes B_{i-1})\cdot (e \otimes f).
\end{equation*}
So,
\begin{equation*}
\begin{split}
A_1 \otimes (B_i/B_{i-1}) &\cong m(q)n(q)(A \otimes B)/(A_1 \otimes B_{i-1})\cdot (e \otimes f) \\
&=m(q)n(q)(A \otimes B)/(A_1 \otimes B_{i-1})\cdot (\overbar{e \otimes f}),
\end{split}
\end{equation*}
which is the projective cover of a simple $(A \otimes B)/(A_1 \otimes B_{i-1})$-module. Let $P(\pi,\sigma):= (A \otimes B)/(A_1 \otimes B_{i-1})\cdot (e \otimes f)$. For (SI2) it remains to check that $B_{\pi,\sigma}:=\mathrm{End}(P(\pi,\sigma))^{\mathrm{op}} \in \mathscr{B}$.
\begin{equation*}
\begin{split}
B_{\pi,\sigma} := \mathrm{End}(P(\pi,\sigma))^{\mathrm{op}} &\cong (e \otimes f)\cdot(A \otimes B)/(A_1 \otimes B_{i-1})\cdot(e \otimes f) \\ &\cong (e \otimes f)\cdot (A\otimes B)/(A \otimes B_{i-1}) \cdot (e \otimes f) \\ &\cong eAe \otimes f(B/B_{i-1})f \in \mathscr{B},
\end{split}
\end{equation*}
since $eAe$, $f(B/B_{i-1})f \in \mathscr{B}$. This proves (SI2). We now show (PSI).
\\
\\
$Ae$ is free finite rank as a right $eAe$-module, i.e. $Ae \cong (eAe)^k$. Similarly, $B/B_{i-1}f$ is free finite rank as a right $f(B/B_{i-1})f$-module, i.e. $(B/B_{i-1})f \cong (f(B/B_{i-1})f)^l$. This implies,
\begin{equation*}
A \otimes (B/B_{i-1}) (e \otimes f) \cong (eAe \otimes f(B/B_{i-1})f)^{kl}.
\end{equation*}
This proves (PSI) so that $(A_1 \otimes B_i)/(A_1 \otimes B_{i-1}) \subset (A \otimes B)/(A_1 \otimes B_{i-1})$, for $1 \leq i \leq m$, is an affine heredity ideal. Thus, using induction, we have shown that $A \otimes B$ is affine quasi-hereditary.
\end{proof}
Let $A$ be the path algebra of the following quiver, as in \ref{subsection:affine cellularity}. 
\begin{center}
\begin{tikzpicture}
\matrix (m) [matrix of math nodes, row sep=3em,
column sep=3em, text height=1.5ex, text depth=0.25ex]
{e_1 & e_2 \\};
\path[->]
(m-1-1) edge [bend left=30] node [above] {$u_1$} (m-1-2)
(m-1-2) edge [bend left=30] node [below] {$u_2$} (m-1-1);
\end{tikzpicture}
\end{center}
$A$ is a graded algebra with deg$(e_i)=0$, $\textrm{deg}(u_i e_i)=1$, for $i\in \{1,2 \}$. In this case, $\Pi=\{1,2\}$. There are two simple graded modules denoted $L(1)$ and $L(2)$, with projective covers $P(1)=Ae_1$ and $P(2)=Ae_2$, respectively. As a \textbf{k}-vector space, $A=\bigoplus_{n \in \mathbb{Z}_{\geq 0}} A_n$, where $A_n$ is the subspace of $A$ spanned by the two degree $n$ elements. So $A$ is a Laurentian algebra. Consider $Ae_1$ as a left $A$-module. Since it is a uniserial module, every submodule is finitely generated and so $Ae_1$ is left Noetherian. Similarly, $Ae_2$ is left Noetherian. Hence $A=Ae_1 \oplus Ae_2$ is left Noetherian, using the fact that the direct sum of two left Noetherian modules is again left Noetherian.
\begin{proposition} \label{proposition:A is a.q.h}
$A$ is affine quasi-hereditary.
\end{proposition}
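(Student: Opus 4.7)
The plan is to exhibit the two-step affine heredity chain
\[
(0) \subset J_1 \subset A, \qquad J_1 := Ae_2A,
\]
and verify the conditions (SI1), (SI2), (PSI) at each stage.

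First I would record the basic corner computations. Direct inspection of paths in the quiver gives $e_1Ae_1 \cong \mathbf{k}[u_2u_1]$ and $B := e_2Ae_2 \cong \mathbf{k}[u_1u_2]$, both positively graded polynomial rings, hence lying in $\mathscr{B}$. The splittings $e_2A = e_2Ae_2 \oplus e_2Ae_1 = B \oplus Bu_1$ and $Ae_2 = e_2Ae_2 \oplus e_1Ae_2 = B \oplus u_2B$ exhibit $e_2A$ as a free left $B$-module of rank $2$ and $P(2) := Ae_2$ as a free right $B$-module of rank $2$ with basis $\{e_2, u_2\}$. This establishes the projective-cover data $P(2) = Ae_2$, $B_{\pi_2} = B \in \mathscr{B}$, and verifies (PSI) for the idempotent $e_2$.

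Next I would check that $J_1$ is an affine heredity ideal of $A$. Since $e_2$ is a primitive idempotent with $e_2^2 = e_2$, one has $J_1^2 = J_1$; combined with projectivity of $J_1$ as a left $A$-module this yields (SI1) via Lemma \ref{lemma:from Kleshchev}. To obtain projectivity and (SI2) simultaneously I would use the natural map $Ae_2 \otimes_B e_2A \to J_1$ together with the left $B$-module decomposition $e_2A = B \oplus Bu_1$ to produce an isomorphism of graded left $A$-modules
\[
J_1 \;\cong\; Ae_2 \otimes_B B \;\oplus\; Ae_2 \otimes_B Bu_1 \;\cong\; Ae_2 \oplus qAe_2 \;=\; (1+q)P(2),
\]
where the degree shift arises from $\deg(u_1) = 1$. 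This proves (SI2) with graded multiplicity $m(q) = 1+q$.

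Finally I would compute $A/J_1$. Since $e_2 \in J_1$ and the identities $u_1 = e_2u_1$, $u_2 = u_2e_2$ show that $u_1, u_2 \in J_1$, all loops $u_1u_2$ and $u_2u_1$ vanish in the quotient, leaving $A/J_1 \cong \mathbf{k}$ generated by $\bar e_1 = \bar 1$. The chain $(0) \subset A/J_1$ is then trivially an affine heredity chain: the ambient algebra is itself the heredity ideal, its projective cover is $\mathbf{k}$, the endomorphism ring is $\mathbf{k} \in \mathscr{B}$, and rank-one freeness is automatic. I expect the only substantive step to be the explicit identification $J_1 \cong (1+q)P(2)$ via the corner decomposition of $e_2A$; the remaining verifications are bookkeeping or direct consequences of the smallness of the quiver.
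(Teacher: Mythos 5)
Your proposal is correct and follows essentially the same route as the paper: a length-two chain built from a single vertex idempotent, with the heredity ideal identified as $(1+q)$ copies of the corresponding indecomposable projective, the corner algebra recognised as a polynomial ring in the length-two cycle, and the quotient collapsing to $\mathbf{k}$. The only (immaterial) differences are that you use $e_2$ where the paper uses $e_1$ --- equivalent by the evident symmetry of the quiver --- and that you deduce (SI1) from idempotency of $Ae_2A$ via Lemma \ref{lemma:from Kleshchev} rather than by the paper's direct computation of $\mathrm{Hom}_A(Ae_1A, A/Ae_1A)$.
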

\begin{proof}
Consider the following chain of ideals
\begin{equation*}
(0) \subsetneq A e_1 A \subsetneq A.
\end{equation*}
We first take $Ae_1A \subsetneq A$ and show it is an affine heredity ideal.
\begin{itemize}
\item[(SI1)] As a \textbf{k}-vector space $A/Ae_1A=\langle e_2 \rangle$. We have $\mathrm{Hom}_A(Ae_1A,A/Ae_1A)=0$; for any $f \in \textrm{Hom}_A(Ae_1A,A/Ae_1A)$ and any $a_1, a_2 \in A$, $f(a_1 e_1 a_2)=a_1e_1f(e_1a_2)=0$, so that Hom$_A(Ae_1A,A/Ae_1A)=0$.
\item[(SI2)] We have $Ae_1A \subseteq Ae_1 + Au_2 e_2$. But clearly $Ae_1 + Au_2 e_2 \subseteq Ae_1A$ and so $Ae_1A = Ae_1 + Au_2 e_2$. Since $Ae_1 \cap Au_2 e_2 = \{0\}$, we have $Ae_1 A = Ae_1 \oplus Au_2 e_2$. So $m_1(q)=1+q$ and $P(1)=Ae_1$. 
\\
\\
$P(1)=Ae_1$ so $B_1= \textrm{End}_A (Ae_1)^{\textrm{op}} \cong e_1 A e_1 \cong \textbf{k}[x]$, a polynomial algebra. So indeed we have $B_1 \in \mathscr{B}$.
\item[(PSI)] As a right $e_1Ae_1$-module, $Ae_1=e_1\cdot e_1Ae_1 + u_1 e_1\cdot e_1 A e_1$ and so is finitely generated. Also, $e_1\cdot e_1Ae_1 \cap u_1 e_1\cdot e_1 A e_1 = \{ 0 \}$. Then, $Ae_1 = \bigoplus_{\{e_1, u_1e_1\}} e_1Ae_1$ as a right $e_1Ae_1$-module and so is projective, which implies $Ae_1$ is flat.
\end{itemize}
This shows that $Ae_1A$ is an affine heredity ideal in $A$. It remains to show that $A/Ae_1A$ is an affine heredity ideal in $A/Ae_1A$. But this is immediate: for any ring $R$, $\textrm{Hom}_R(R,R/R)=0$ so that (SI1) is satisfied. $A/Ae_1A$ has one simple module with $\Pi=\{ 2 \}$. In fact, since
\begin{equation*}
A/Ae_1A \cong \textbf{k}
\end{equation*}
we have that the regular representation $A/Ae_1A$ is a simple $A/Ae_1A$-module with projective cover $P(2)=A/Ae_1A$. So $m_2(q)=1$. Also, $\textrm{End}_{A/Ae_1A}(A/Ae_1A)^{\textrm{op}}\cong A/Ae_1A \cong \textbf{k}$ which lies in $\mathscr{B}$ so (SI2) is satisfied. Since $A/Ae_1A = \textbf{k}e_2=e_2\textbf{k}$, $A/Ae_1A$ is finitely generated as a right \textbf{k}-module and clearly $A/Ae_1A$ is flat. 
\\
\\
Then $(0) \subsetneq A e_1 A \subsetneq A$ is an affine heredity chain and hence $A$ is affine quasi-hereditary as claimed.
\end{proof}
Combining Proposition \ref{proposition:tensor product of 2 a.q.h algebras is a.q.h} with Proposition \ref{proposition:A is a.q.h} we obtain the following Corollary.
\begin{corollary} \label{corollary:m tensor copies of A is affine qh}
The \textbf{k}-algebras $A^{\otimes (m-1)} \otimes_{\textbf{k}} \tilde{A}$, $m \in \mathbb{N}$ are affine quasi-hereditary.
\end{corollary}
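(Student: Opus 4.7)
The plan is to prove this as an immediate consequence of the two preceding results, by induction on $m$. Since Proposition \ref{proposition:A is a.q.h} establishes that $A$ is affine quasi-hereditary, and since $\tilde{A} \cong A$ (as noted earlier in Section \ref{subsection:affine cellularity}), the algebra $\tilde{A}$ is likewise affine quasi-hereditary. Then Proposition \ref{proposition:tensor product of 2 a.q.h algebras is a.q.h} allows us to build up tensor products one factor at a time.

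More concretely, I would proceed as follows. For the base case $m = 1$, the algebra $A^{\otimes 0} \otimes_{\textbf{k}} \tilde{A}$ is just $\tilde{A}$, which is affine quasi-hereditary by the isomorphism $\tilde{A} \cong A$ together with Proposition \ref{proposition:A is a.q.h}. For the inductive step, assuming $A^{\otimes(m-2)} \otimes_{\textbf{k}} \tilde{A}$ is affine quasi-hereditary, I would write
\begin{equation*}
A^{\otimes(m-1)} \otimes_{\textbf{k}} \tilde{A} \cong A \otimes_{\textbf{k}} \bigl( A^{\otimes(m-2)} \otimes_{\textbf{k}} \tilde{A} \bigr)
\end{equation*}
and apply Proposition \ref{proposition:tensor product of 2 a.q.h algebras is a.q.h} to the pair $A$ and $A^{\otimes(m-2)} \otimes_{\textbf{k}} \tilde{A}$ to conclude that their tensor product is again affine quasi-hereditary.

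There is no real obstacle here; the content of the corollary is already packaged into the two preceding propositions, and what remains is essentially bookkeeping. The only mild point to verify is that the associativity of the tensor product matches up correctly so that the inductive hypothesis can be applied, and that the isomorphism $A \cong \tilde{A}$ is used in the base case (since the right-most factor is $\tilde{A}$ rather than $A$). Both are routine, so the proof should be no more than a few lines.
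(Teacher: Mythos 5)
Your proposal is correct and is exactly the argument the paper intends: the corollary is stated as an immediate consequence of Proposition \ref{proposition:tensor product of 2 a.q.h algebras is a.q.h} and Proposition \ref{proposition:A is a.q.h}, with the induction on the number of tensor factors left implicit. Your explicit base case (using $\tilde{A}\cong A$) and inductive step just spell out that same bookkeeping.
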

\begin{corollary} \label{corollary: VV aff qh when q in I}
For any $\nu \in {^\theta}\mathbb{N}I_q$ with multiplicity one the VV algebras $\vv$ are affine quasi-hereditary.
\end{corollary}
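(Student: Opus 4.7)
The plan is to combine the two main ingredients available at this point in the paper, namely Theorem \ref{theorem:ME in case q in I mult one} and Corollary \ref{corollary:m tensor copies of A is affine qh}, and to invoke Morita invariance of affine quasi-heredity. Fix $\nu \in {^\theta}\mathbb{N}I_q$ with multiplicity one and $|\nu|=2m$. By Theorem \ref{theorem:ME in case q in I mult one}, $\vv$ and $A^{\otimes(m-1)}\otimes_{\textbf{k}} \tilde{A}$ are Morita equivalent; by Corollary \ref{corollary:m tensor copies of A is affine qh}, $A^{\otimes(m-1)}\otimes_{\textbf{k}} \tilde{A}$ is affine quasi-hereditary. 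The conclusion will follow once we know that affine quasi-heredity transfers across the Morita equivalence.

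For the transfer, recall that the equivalence is implemented by the full idempotent $\textbf{e} = \sum_{\lambda \in \Pi^{\pm}} \textbf{e}(\textbf{i}_\lambda)$ (see Corollary \ref{corollary:e is full in vv}) with $\textbf{e}\vv\textbf{e} \cong A^{\otimes(m-1)}\otimes_{\textbf{k}} \tilde{A}$. Given an affine heredity chain
\begin{equation*}
(0)=J_0 \subsetneq J_1 \subsetneq \cdots \subsetneq J_n = A^{\otimes(m-1)}\otimes_{\textbf{k}} \tilde{A}
\end{equation*}
I would pull it back to the chain $\vv\textbf{e} J_i \textbf{e}\vv$ inside $\vv$. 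The steps to verify are: (i) these pullbacks form a strictly increasing chain of two-sided ideals of $\vv$ (using fullness of $\textbf{e}$); (ii) each subquotient $\vv \textbf{e} J_i \textbf{e} \vv / \vv \textbf{e} J_{i-1} \textbf{e} \vv$ is an affine heredity ideal in the corresponding quotient of $\vv$. For (ii), the conditions (SI1), (SI2) and (PSI) of Definition \ref{definition:affine quasi-hereditary} can be checked directly: idempotent ideals pull back to idempotent ideals via Lemma \ref{lemma:from Kleshchev}; the indecomposable projective $P(\pi)$ in the subquotient of $\textbf{e}\vv\textbf{e}$ corresponds to the indecomposable projective $\vv\textbf{e}' \otimes_{\textbf{e}\vv\textbf{e}} P(\pi)$ in the subquotient of $\vv$ (where $\textbf{e}'$ is chosen primitive in the class of $\textbf{e}$ via Lemma \ref{lemma:from Kleshchev II}); and the endomorphism algebra $B_\pi$ is preserved, since $e'\vv e' \cong e'(\textbf{e}\vv\textbf{e})e' \in \mathscr{B}$. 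The freeness condition (PSI) transfers because Morita equivalence preserves the property of being free of finite rank over the endomorphism ring of an indecomposable projective.

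The main obstacle is thus not the construction of the chain but the careful bookkeeping of these three conditions along the Morita equivalence in the graded, non-finite-dimensional setting. In particular one must ensure that the chosen primitive idempotents $e'$ indeed lie in the subalgebra $\textbf{e}\vv\textbf{e}$, which is automatic since $\textbf{e}$ is a sum of pairwise orthogonal idempotents and the primitive decomposition of $A^{\otimes(m-1)}\otimes\tilde{A}$ refines to primitive idempotents of $\textbf{e}\vv\textbf{e}$, hence of $\vv$. Once this is done, Definition \ref{definition:affine quasi-hereditary} is satisfied for $\vv$ and the corollary follows.
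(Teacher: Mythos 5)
Your proof uses the same two ingredients as the paper — the Morita equivalence of Theorem \ref{theorem:ME in case q in I mult one} and the affine quasi-heredity of $A^{\otimes(m-1)}\otimes_{\textbf{k}}\tilde{A}$ from Corollary \ref{corollary:m tensor copies of A is affine qh} — so at the top level the strategy is identical. Where you diverge is in how Morita invariance of affine quasi-heredity is justified. The paper disposes of this in one sentence by citing Kleshchev's Theorem 6.7 in \cite{KleshchevAffineQH}: an algebra is affine quasi-hereditary if and only if its module category is an affine highest weight category, and the latter is a purely categorical (hence Morita-invariant) notion. You instead set out to reprove Morita invariance by hand, transporting the heredity chain through the full idempotent $\textbf{e}$. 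That route is workable in principle, but everything you flag as "bookkeeping" is genuine mathematical content that your sketch does not supply: you need the identification $\vv\textbf{e}\otimes_{\textbf{e}\vv\textbf{e}}\textbf{e}\vv\cong\vv$ coming from fullness to see that $\vv\textbf{e}J_i\textbf{e}\vv$ is, as a left module, the image of $J_i$ under the equivalence (this is what gives projectivity of the subquotients, which is a hypothesis of Lemma \ref{lemma:from Kleshchev}, not a consequence of it); you need to check that graded multiplicities $m(q)$ and the algebras $B_\pi$ are matched; and you need all of this in the graded Laurentian setting. The paper's citation buys all of that for free, which is precisely why it is stated as a corollary rather than a theorem. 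If you intend to keep the hands-on argument, the honest comparison is that you are reproving a known invariance statement whose proof is exactly the list of verifications you defer.
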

\begin{proof}
Kleshchev proves, in \cite{KleshchevAffineQH}, Theorem 6.7, that an algebra $R$ is affine quasi-hereditary if and only if $R$-Mod satisfies certain properties, in which case he calls $R$-Mod an affine highest weight category. This characterisation of affine quasi-heredity is purely categorical, meaning that affine quasi-heredity is a Morita invariant property. Morita equivalence between $A^{\otimes (m-1)} \otimes \tilde{A}$ and $\vv$, together with Corollary \ref{corollary:m tensor copies of A is affine qh}, proves the claim. 
\end{proof}
\begin{corollary} \label{corollary: VV aff qh when p,q not in I}
Suppose now that $p,q \not\in I$. For any $\nu \in {^\theta}\mathbb{N}I$ the VV algebras $\vv$ are affine quasi-hereditary.
\end{corollary}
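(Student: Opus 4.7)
The plan is to mimic the proof of Corollary \ref{corollary: VV aff qh when q in I} almost verbatim, but with Theorem \ref{theorem:VV Morita equiv to KLR} playing the role of Theorem \ref{theorem:ME in case q in I mult one}. First I would invoke Theorem \ref{theorem:VV Morita equiv to KLR}: under the hypothesis $p,q \not\in I$, the VV algebra $\vv$ is Morita equivalent to the KLR algebra $\subklr^+$ attached to $\tilde{\nu}^+ \in \mathbb{N}I^+$. Second, I would recall that, by the categorical characterisation of affine highest weight categories (\cite{KleshchevAffineQH}, Theorem 6.7), affine quasi-heredity is a Morita invariant property, exactly as was used in the proof of Corollary \ref{corollary: VV aff qh when q in I}. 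Together these two observations reduce the claim to showing that $\subklr^+$ is affine quasi-hereditary for every $\tilde{\nu}^+ \in \mathbb{N}I^+$.

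For that final reduction I would appeal to the known affine quasi-heredity of KLR algebras in type $A$. The underlying quiver $\Gamma_{I^+}$ is of type $A_\infty^\infty$ when $p$ is not a root of unity, and of cyclic type $A_r^{(1)}$ when $p^2$ is a primitive $r$-th root of unity. Since $\tilde{\nu}^+$ has finite support, $\subklr^+$ only depends on a finite full subquiver of $\Gamma_{I^+}$, so in the non-root-of-unity case this is simply a KLR algebra of finite type $A_n$; I would cite the construction of standard modules by Brundan--Kleshchev--McNamara, from which affine quasi-heredity of finite type $A$ KLR algebras is the motivating example in \cite{KleshchevAffineQH}. Transporting this through the Morita equivalence of Theorem \ref{theorem:VV Morita equiv to KLR} yields the corollary.

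The only genuine obstacle is making the appeal to the external affine quasi-heredity result precise: one must be sure that the cited reference treats exactly the quiver shape that arises here (i.e.\ the connected component $\Gamma_{I^+}$ rather than the doubled quiver $\Gamma_I$ with its involution $\theta$). Once that citation is in place, the argument is purely formal. Note in particular that, in contrast with Corollary \ref{corollary: VV aff qh when q in I}, no multiplicity-one hypothesis on $\nu$ is required, because the Morita equivalence in Theorem \ref{theorem:VV Morita equiv to KLR} holds unconditionally in case (A1); this is what allows the statement here to be uniform in $\nu$.
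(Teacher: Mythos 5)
Your proposal matches the paper's proof: the paper likewise combines the Morita equivalence of Theorem \ref{theorem:VV Morita equiv to KLR} with the Morita invariance of affine quasi-heredity (via the categorical characterisation in \cite{KleshchevAffineQH}) and the known affine quasi-heredity of type $A$ KLR algebras when $p$ is not a root of unity (the paper cites \cite{KleshchevAffineQH}, Section 10.1, which rests on the Brundan--Kleshchev--McNamara standard module theory you mention). Your remarks about restricting to the connected component $\Gamma_{I^+}$ and the standing non-root-of-unity assumption are consistent with the paper's setup, so the argument is essentially identical.
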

\begin{proof}
Using Theorem \ref{theorem:VV Morita equiv to KLR} together with the fact that the algebras  $\subklr$ are affine quasi-hereditary, when $p$ is not a root of unity, (see \cite{KleshchevAffineQH}, Section 10.1) immediately gives the result.
\end{proof}
\subsubsection{Balanced Involution}
Let $A$ be a left Noetherian Laurentian algebra and let $\tau: A \longrightarrow A$ be a homogeneous anti-involution on $A$. That is, $\tau$ is an anti-automorphism of $A$ such that $\tau^2=id_A$. Given a left $A$-module $M \in A$-Mod, we can define a right $A$-module $M^\tau$ via $ma:=\tau(a)m$. Given a graded module $M \in A$-Mod with finite-dimensional graded components $M_n$ we can define its graded dual $M^\circledast \in A$-Mod. As a graded vector space, $M_n^\circledast:=M_{-n}^*$ for all $n \in \mathbb{Z}$, and the action is given by $af(m):=f(\tau(a)m)$, for $f \in M^\circledast$, $m \in M$ and $a \in A$. Note that $(q^nV)^\circledast \cong q^{-n}V^\circledast$ and dim$_qV^\circledast=$dim$_{q^{-1}}V$.
\begin{defn}
The homogeneous anti-involution $\tau$ of $A$ is called a \textbf{balanced involution} if, for every $\pi \in \Pi$, we have that $L(\pi)^\circledast \cong q^nL(\pi)$ for some even integer $n$.
\end{defn}
As before, let $A$ be the path algebra $A=\textbf{k}(e_1 \rightleftarrows e_2)$. Define the following algebra morphism on $A$.
\begin{equation*}
\begin{split}
\tau: A &\longrightarrow A \\
e_i &\mapsto e_i \quad \textrm{ for }i=1,2 \\
u_1 &\mapsto u_2 \\
u_2 &\mapsto u_1.
\end{split}
\end{equation*}
The map $\tau$ is an anti-involution on $A$. Now consider $L(\pi)^\circledast$ for $\pi=1$. 
\begin{equation*}
L(1)^\circledast_n =L(1)^*_{-n}=\left\{
\begin{array}{l l}
			\textrm{Hom}(L(1),\textbf{k}) & \quad \textrm{ if } n=0\\
			0 & \quad \textrm{ if } n\neq 0.
\end{array} \right.
\end{equation*}
As a $\textbf{k}$-vector space Hom$(L(1),\textbf{k})=\langle f \rangle$ where $f(e_1)=1$. Hom$(L(1),\textbf{k})$ is an $A$-module with the action given by $(af)(m)=f(\tau(a)m)$, for all $a \in A$ and $m \in L(1)$. Then, $(e_1f)(e_1)=f(e_1)$ so that $e_1f=f$. $(e_2f)(e_1)=f(e_2e_1)=0$ so $e_2f=0$. Similarly, $u_1f=0=u_2f$. Then Hom$(L(1),\textbf{k}) \cong L(1)$, under the $A$-module isomorphism $f \mapsto e_1$, and hence $L(\pi)^\circledast \cong L(\pi)$ so that $\tau$ is a balanced involution.
\begin{remark}
This also applies to $\tilde{A}$ so we can extend $\tau$ to a homogeneous anti-involution $\tau^{\otimes m}$ on \hbox{$A^{\otimes (m-1)} \otimes_{\textbf{k}} \tilde{A}$}, for any $m \in \mathbb{N}$. Indeed, $\tau^{\otimes m}$ is also a balanced involution.
\end{remark}
Now we have shown the algebras $A^{\otimes (m-1)} \otimes_{\textbf{k}} \tilde{A}$, $m \in \mathbb{N}$, are affine quasi-hereditary we can use the following result of Kleshchev to deduce that $A^{\otimes (m-1)} \otimes_{\textbf{k}} \tilde{A}$, $m \in \mathbb{N}$, are affine cellular.
\begin{proposition}[\cite{KleshchevAffineQH}, Proposition 9.8]
Let $B$ be an affine quasi-hereditary algebra with a balanced involution $\tau$. Then $B$ is an affine cellular algebra.
\end{proposition}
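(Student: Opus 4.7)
The plan is to promote the affine heredity chain witnessing that $B$ is affine quasi-hereditary into an affine cell chain in the sense of Definition \ref{definition:affine cellular algebra}. Write the chain as $(0) = J_0 \subsetneq J_1 \subsetneq \cdots \subsetneq J_n = B$ and argue by induction on $i$, showing that each $J_i$ is $\tau$-stable and that $\tau$ descends to a balanced anti-involution on the quotient $B/J_{i-1}$; since the simples of $B/J_{i-1}$ form a subset of those of $B$, balancedness is inherited automatically by the quotient. It then suffices to prove that a single affine heredity ideal $J \subset A$, in a Noetherian Laurentian algebra $A$ equipped with a balanced anti-involution $\tau$, is an affine cell ideal.

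For this, apply Lemmas \ref{lemma:from Kleshchev} and \ref{lemma:from Kleshchev II} to write $J = AeA$ for a primitive idempotent $e$, with the natural map $Ae \otimes_{eAe} eA \to J$ an isomorphism. The balanced hypothesis $L(\pi)^\circledast \cong q^n L(\pi)$, $n$ even, implies that the projective cover $P(\pi) \cong Ae$ is self-dual up to an even shift; combined with the Krull--Schmidt theorem for Laurentian algebras and the primitivity of $e$, this forces $\tau(e)$ to be conjugate to $e$ in $A$, so one may replace $e$ by a $\tau$-fixed primitive idempotent in its isomorphism class. In particular $\tau(J) = J$. Set $B' := eAe$, which by (SI2) lies in $\mathscr{B}$ and is therefore a commutative polynomial algebra, and let $i := \tau|_{B'}$; commutativity of $B'$ upgrades the restricted anti-involution to a genuine involution. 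By (PSI), $Ae$ is free of finite rank as a right $B'$-module; picking a homogeneous $B'$-basis $\{v_1, \ldots, v_r\}$ and setting $V := \bigoplus_{k=1}^r \textbf{k}\,v_k$ yields an $(A,B')$-bimodule isomorphism $Ae \cong V \otimes_{\textbf{k}} B' =: \Delta$. Applying $\tau$ identifies $eA$ with $\Delta' := B' \otimes_{\textbf{k}} V$ as a $(B',A)$-bimodule, whose right $A$-action is exactly the twisted action of Definition \ref{definition:affine cell ideal}. Composing with the Lemma \ref{lemma:from Kleshchev II} isomorphism then gives $J \cong \Delta \otimes_{B'} \Delta'$.

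The main obstacle will be verifying the commutative square in Definition \ref{definition:affine cell ideal}, i.e. that under the composite isomorphism the anti-involution $\tau$ corresponds to $v \otimes b \otimes b' \otimes w \mapsto w \otimes i(b') \otimes i(b) \otimes v$. This forces the basis $\{v_k\}$ of $V$ to be chosen self-dual with respect to the bilinear pairing $Ae \times eA \to eAe$ induced by multiplication and twisted by $\tau$; the evenness of the shift $n$ in the balanced condition is precisely what permits the existence of such a homogeneous self-dual basis, since an odd shift would introduce a grading mismatch obstructing homogeneity. Once the basis is in place, commutativity of the diagram reduces to the anti-homomorphism identity $\tau(xay) = \tau(y)\tau(a)\tau(x)$ on elements of $A$. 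The remaining bookkeeping, namely that the induction carries through each successive quotient $B/J_{i-1}$ while preserving the Noetherian, Laurentian and $\tau$-stable properties, is standard but should be verified carefully at each step.
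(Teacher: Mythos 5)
This proposition is not proved in the paper at all: it is quoted from Kleshchev (\cite{KleshchevAffineQH}, Proposition 9.8) and used as a black box, so there is no internal proof to compare your route against; I can only assess your argument on its own terms.

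As written it has a genuine gap, and it sits exactly where you yourself locate the ``main obstacle''. Two claims are asserted rather than proved: first, that since $\tau(e)$ is conjugate to $e$ (Krull--Schmidt), one may replace $e$ by a $\tau$-fixed primitive idempotent in its isomorphism class; second, that evenness of the shift in the balanced condition guarantees a homogeneous basis of $V$ which is self-dual for the pairing $Ae \times eA \longrightarrow eAe$, which is what is needed to make the square in Definition \ref{definition:affine cell ideal} commute. Neither follows from the hypotheses you actually invoke. Conjugacy of $\tau(e)$ and $e$ does not produce a $\tau$-fixed representative: consider $M_2(\textbf{k})$ with the symplectic anti-involution $X \mapsto J X^{\mathrm{t}} J^{-1}$. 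This algebra is affine quasi-hereditary (it is semisimple, with $B_\pi \cong \textbf{k} \in \mathscr{B}$), the anti-involution is homogeneous, and it is balanced in the sense recorded in the paper, since the unique simple is $\circledast$-self-dual with shift $0$; yet the $\tau$-fixed elements form the scalars, so the only $\tau$-fixed idempotents are $0$ and $1$ and no primitive idempotent class contains a $\tau$-fixed element. Moreover the bilinear form on the simple module induced by this duality is alternating, so no ``self-dual'' basis of the kind you want can exist, and a trace/dimension computation shows that no bimodule isomorphism $J \to \Delta \otimes_B \Delta'$ can intertwine this $\tau$ with the swap-type map of Definition \ref{definition:affine cell ideal}. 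So the two steps your proof hinges on are not consequences of ``balanced plus Krull--Schmidt'' alone; the symmetry input that makes the actual result work (Kleshchev's argument is organised around the heredity data and standard modules rather than around a $\tau$-fixed idempotent) is precisely what is missing, and indeed the example indicates that more is needed than the bare isomorphisms $L(\pi)^\circledast \cong q^n L(\pi)$ with $n$ even. A smaller point: your induction needs $\tau(J_{i-1})=J_{i-1}$, which you derive from the same gapped idempotent step; this part can instead be obtained directly, since the heredity ideal is the trace ideal of $P(\pi)$ and $\tau$ carries it to the trace ideal of the projective cover of $L(\pi)^\circledast$, which balancedness identifies with $P(\pi)$ up to shift.
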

\begin{corollary} \label{corollary:A^otimes (m-1) otimes A is affine cellular}
The algebras $A^{\otimes (m-1)} \otimes_{\textbf{k}} \tilde{A}$, $m \in \mathbb{N}$ are affine cellular with respect to $\tau^{\otimes m}$.
\end{corollary}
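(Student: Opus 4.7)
The plan is to deduce the corollary immediately from Kleshchev's Proposition 9.8, once we have verified its two hypotheses. The affine quasi-heredity hypothesis is already provided by Corollary \ref{corollary:m tensor copies of A is affine qh}, so the substantive content is to justify the remark that $\tau^{\otimes m}$ is a balanced anti-involution on $A^{\otimes (m-1)}\otimes_{\textbf{k}}\tilde{A}$.

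First I would observe that $\tau^{\otimes m}$, defined on simple tensors by $\tau^{\otimes m}(a_1\otimes \cdots \otimes a_m)=\tau(a_1)\otimes\cdots\otimes\tau(a_m)$, is a homogeneous anti-automorphism of $A^{\otimes(m-1)}\otimes_{\textbf{k}}\tilde{A}$: multiplicatively it reverses the order within each tensor factor (which is exactly what an anti-automorphism on a tensor product should do), and squaring gives the identity because $\tau^2=\mathrm{id}$ on each factor. Homogeneity is inherited from $\tau$, since $\tau$ fixes the generators $e_i,a_i$ in degree zero and swaps the degree-one generators $u_1\leftrightarrow u_2$ and $v_1\leftrightarrow v_2$.

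Next I would check the balanced condition. Since $\mathbf{k}$ is a field, every simple module over $A^{\otimes(m-1)}\otimes_{\textbf{k}}\tilde{A}$ is an outer tensor product $L(\pi_1)\boxtimes\cdots\boxtimes L(\pi_m)$ of simples of the tensor factors. The graded duality interacts with outer tensor products as $(M\boxtimes N)^{\circledast}\cong M^{\circledast}\boxtimes N^{\circledast}$ (with respect to $\tau\otimes\tau$). Already in the discussion preceding the corollary it was verified that $L(1)^{\circledast}\cong L(1)$ for $A$, and the identical computation shows $L(2)^{\circledast}\cong L(2)$ for $A$ and also for $\tilde{A}$; that is, every simple of $A$ (resp.\ $\tilde{A}$) is self-dual with degree shift $0$. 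Tensoring, we conclude
\begin{equation*}
(L(\pi_1)\boxtimes\cdots\boxtimes L(\pi_m))^{\circledast}\cong L(\pi_1)\boxtimes\cdots\boxtimes L(\pi_m) = q^0\bigl(L(\pi_1)\boxtimes\cdots\boxtimes L(\pi_m)\bigr),
\end{equation*}
and $0$ is even, so $\tau^{\otimes m}$ is balanced.

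Having checked both hypotheses, Kleshchev's Proposition 9.8 applies directly to give affine cellularity of $A^{\otimes(m-1)}\otimes_{\textbf{k}}\tilde{A}$ with respect to $\tau^{\otimes m}$. The only mildly delicate point is being careful that outer tensor products really do exhaust the simples over the tensor-product algebra (which requires $\mathbf{k}$ to be such that $\mathrm{End}(L(\pi))=\mathbf{k}$ for each simple of the factors) and that the graded duality is compatible with $\tau^{\otimes m}$, but both are essentially formal, so the proof is a short verification rather than a genuine obstacle.
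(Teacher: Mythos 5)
Your proposal is correct and follows exactly the route the paper takes: the corollary is an immediate application of Kleshchev's Proposition 9.8, with the affine quasi-heredity supplied by Corollary \ref{corollary:m tensor copies of A is affine qh} and the balancedness of $\tau^{\otimes m}$ asserted in the preceding remark. Your write-up simply fills in the details of that remark (self-duality of all simples of each tensor factor and compatibility of graded duality with outer tensor products), which the paper leaves implicit.
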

\begin{lemma}[\cite{Yang}, Lemma 3.4] \label{lemma:Yang's lemma}
Let $A$ be an algebra with an idempotent $e \in A$ and a \textbf{k}-involution $i$ such that $i(e)=e$. Suppose that $AeA=A$ and $eAe$ is an affine cellular algebra with respect to the restriction of $i$ to $eAe$. Then $A$ is affine cellular with respect to $i$.
\end{lemma}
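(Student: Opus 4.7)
The plan is to build an affine cell chain for $A$ by inflating a given affine cell chain for $eAe$ through the idempotent $e$. Starting from a chain $0 = J_0 \subset J_1 \subset \cdots \subset J_n = eAe$ realising the affine cellularity of $eAe$, with each $J_k$ fixed by the restricted involution $i|_{eAe}$, I would set $I_k := AJ_kA \subseteq A$. This yields a chain of two-sided ideals
\[
0 = I_0 \subset I_1 \subset \cdots \subset I_n = A(eAe)A = AeA = A,
\]
where the last equality uses the hypothesis $AeA=A$. Each $I_k$ is preserved by $i$, since $i(e)=e$ and $i(J_k)=J_k$. What remains is to verify that each quotient $I_k/I_{k-1}$ is an affine cell ideal of $A/I_{k-1}$.

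To check this, I would pass to $\bar A := A/I_{k-1}$ with idempotent $\bar e := e + I_{k-1}$ and the restricted involution. A short calculation, using that $J_{k-1}$ is a two-sided ideal of $eAe$ (so $eA\cdot J_{k-1}\cdot Ae \subseteq J_{k-1}$), gives $eAe \cap I_{k-1} = J_{k-1}$ and hence $\bar e \bar A \bar e \cong eAe/J_{k-1}$; the condition $\bar A \bar e \bar A = \bar A$ descends from $AeA = A$. The problem thus reduces to the single-step claim: if $J \subseteq eAe$ is an affine cell ideal, then $AJA \subseteq A$ is an affine cell ideal. Since $AeA=A$, the multiplication map $Ae \otimes_{eAe} eA \to A$ is an isomorphism of $A$-bimodules, and consequently
\[
AJA \;\cong\; Ae \otimes_{eAe} J \otimes_{eAe} eA.
\]
Substituting $J \cong \Delta \otimes_B \Delta'$ with $\Delta = V \otimes_{\mathbf{k}} B$ and $\Delta' = B \otimes_{\mathbf{k}} V$ yields $AJA \cong \widetilde\Delta \otimes_B \widetilde\Delta'$, where $\widetilde\Delta := Ae \otimes_{eAe} \Delta$ and $\widetilde\Delta' := \Delta' \otimes_{eAe} eA$.

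The main technical step is to verify that $\widetilde\Delta$ has the shape required by Definition \ref{definition:affine cell ideal}, namely $\widetilde\Delta \cong \widetilde V \otimes_{\mathbf{k}} B$ for some free $\mathbf{k}$-module $\widetilde V$ of finite rank. The hypothesis $AeA=A$ furnishes a relation $\sum_i a_i e b_i = 1$, which yields an explicit splitting $Ae \hookrightarrow (eAe)^n$, so $Ae$ is finitely generated projective as a right $eAe$-module; tensoring makes $\widetilde\Delta$ a direct summand of $\Delta^n$, and hence finitely generated projective as a right $B$-module. Since $B \in \mathscr{B}$ is a polynomial algebra, Quillen--Suslin upgrades projective to free of finite rank, and the symmetric argument handles $\widetilde\Delta'$. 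The compatibility of the involution then follows by naturality: $i$ exchanges $Ae$ and $eA$, so the swap diagram of Definition \ref{definition:affine cell ideal} for $J$ inflates directly to the corresponding diagram for $AJA$. This yields an affine cell ideal $AJA$ at each stage of the chain and therefore realises $A$ as an affine cellular algebra.
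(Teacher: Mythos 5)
The paper offers no argument for this statement: it is quoted verbatim from Yang's Lemma~3.4, so there is nothing internal to compare against and your proposal must stand on its own. Its overall strategy is the expected one, and most of it is sound: since $AeA=A$, the idempotent $e$ is full, $Ae$ is a progenerator, hence $Ae$ is finitely generated projective as a right $eAe$-module and $eA$ as a left $eAe$-module (the dual basis coming from $1=\sum a_ieb_i$, exactly as you say), the multiplication map $Ae\otimes_{eAe}eA\to A$ is an isomorphism, $eAe\cap I_{k-1}=J_{k-1}$, $\bar e\bar A\bar e\cong eAe/J_{k-1}$, and $AJA\cong Ae\otimes_{eAe}J\otimes_{eAe}eA$ by flatness of $Ae$ and $eA$ over $eAe$. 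The reduction to the single-step claim is therefore correct.

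The genuine gap is the freeness step. In Definition~\ref{definition:affine cell ideal} the algebra $B$ is only required to be affine, i.e.\ a \emph{quotient} of a polynomial ring, not a polynomial ring itself; you have imported the class $\mathscr B$ of polynomial algebras from the affine quasi-hereditary axioms into the affine cellular ones. Over a general affine commutative $B$ a finitely generated projective module need not be free, and Quillen--Suslin does not apply, so knowing that $\widetilde\Delta=Ae\otimes_{eAe}\Delta$ is a direct summand of $\Delta^n\cong B^{nr}$ does not put it in the required form $\widetilde V\otimes_{\mathbf k}B$. For the lemma in the generality stated, this step is unjustified as written. It can be repaired in every situation where the present paper invokes the lemma (Corollaries~\ref{corollary:VV algs q in I are affine cellular} and~\ref{corollary:VV are aff cellular when p,q not in I}), because there one works with the graded version and the relevant cell lattices live over positively graded connected polynomial algebras, for which graded finitely generated projective modules are graded free by graded Nakayama---no Quillen--Suslin is needed. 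Separately, the involution compatibility you dismiss as ``naturality'' does require a choice: the swap map in Definition~\ref{definition:affine cell ideal} depends on the trivialisation $\widetilde\Delta\cong\widetilde V\otimes_{\mathbf k}B$, so you must define $\widetilde\Delta'$ as the image of $\widetilde\Delta$ under $i$ (transporting the chosen basis of $\widetilde V$ through the anti-isomorphism $Ae\to eA$) rather than as an independently constructed module; with that convention the diagram does commute.
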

Let us now recall the following notation. Suppose we have fixed a reduced expression $s_{i_1}\cdots s_{i_k}$ for some $w \in W^B_m$ so that $\sigma_w=\sigma_{i_1}\cdots \sigma_{i_k}$. We have defined $\sigma^\rho_w:=\sigma_{i_k} \cdots \sigma_{i_1}$.
\begin{corollary} \label{corollary:VV algs q in I are affine cellular}
For any $\nu \in {^\theta}\mathbb{N}I_q$ with multiplicity one the VV algebras $\vv$ are affine cellular.
\end{corollary}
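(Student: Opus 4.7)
The plan is to invoke Yang's Lemma (Lemma \ref{lemma:Yang's lemma}) with the same full idempotent $\textbf{e}=\sum_{\lambda \in \Pi^\pm} \eilambda{\lambda}$ used throughout this subsection, transporting the known affine cellular structure on $A^{\otimes(m-1)}\otimes_{\textbf{k}}\tilde{A}$ (Corollary \ref{corollary:A^otimes (m-1) otimes A is affine cellular}) up to $\vv$. The three ingredients Yang's Lemma demands are: a $\textbf{k}$-involution $\omega$ on $\vv$ fixing $\textbf{e}$, the fullness $\vv\textbf{e}\vv=\vv$, and affine cellularity of $\textbf{e}\vv\textbf{e}$ with respect to the restriction of $\omega$. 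Fullness of $\textbf{e}$ is precisely Corollary \ref{corollary:e is full in vv}, so only the involution needs verification.

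Following the pattern used for KLR algebras, I would define $\omega: \vv \to \vv$ on generators by $\omega(\idemp)=\idemp$, $\omega(x_k)=x_k$, $\omega(\sigma_k)=\sigma_k$, $\omega(\pi)=\pi$, and extend as a $\textbf{k}$-linear anti-automorphism; one checks on the defining relations of Definition \ref{definition:vv algebra} that $\omega$ is well-defined, so it is a homogeneous $\textbf{k}$-involution (note $\omega(\sigma_w\idemp)=\sigma_w^\rho\idemp$ in the notation of Section \ref{subsection:affine cellularity}). Since $\omega$ fixes every idempotent pointwise, it fixes $\textbf{e}$ and restricts to a $\textbf{k}$-involution $\omega|_{\textbf{e}\vv\textbf{e}}$ on $\textbf{e}\vv\textbf{e}$.

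The heart of the argument is then to verify that the isomorphism $\phi: A^{\otimes(m-1)}\otimes \tilde{A}\xrightarrow{\sim}\textbf{e}\vv\textbf{e}$ of Theorem \ref{theorem:ME in case q in I mult one} intertwines $\tau^{\otimes m}$ with $\omega|_{\textbf{e}\vv\textbf{e}}$, that is $\phi\circ\tau^{\otimes m}=\omega\circ\phi$. Since both $\tau^{\otimes m}$ and $\omega$ are anti-involutions, it suffices to check the identity on the algebra generators used in the definition of $\phi$. On the degree-zero generators $e_{j_1}\otimes\cdots\otimes e_{j_{m-1}}\otimes a_i$ both sides fix the corresponding $\eilambda{\lambda}$. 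On the degree-one generators, $\tau$ swaps $u_1\leftrightarrow u_2$ and $v_1\leftrightarrow v_2$, and $\phi$ sends $e_{j_1}\otimes\cdots\otimes e_{j_k'}u_\ell e_{j_k}\otimes\cdots\otimes e_{j_{m-1}}\otimes a_i$ to $\eilambda{\lambda'}\sigma_w\eilambda{\lambda}$; applying $\omega$ yields $\eilambda{\lambda}\sigma_w^\rho\eilambda{\lambda'}$, which is exactly the image under $\phi$ of the tensor with $u_\ell$ replaced by the opposite arrow, i.e.\ of $\tau^{\otimes m}$ of the original. The generator $v_ia_i$ is handled the same way using that the longest element $\eta$ and its inverse have $\omega(\sigma_\eta)=\sigma_\eta^\rho=\sigma_{\eta^{-1}}$.

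Once this intertwining is established, Corollary \ref{corollary:A^otimes (m-1) otimes A is affine cellular} transfers through $\phi$ to give affine cellularity of $\textbf{e}\vv\textbf{e}$ with respect to $\omega|_{\textbf{e}\vv\textbf{e}}$. Yang's Lemma then yields affine cellularity of $\vv$ with respect to $\omega$, completing the proof. The main obstacle is the bookkeeping in the intertwining step, specifically confirming that the anti-involution $\omega$ really does exchange $\eilambda{\lambda'}\sigma_w\eilambda{\lambda}$ with the element corresponding to the $\tau$-flipped tensor; this is where one must be careful about the fact that $\sigma_w$ versus $\sigma_w^\rho$ genuinely give different elements of $\vv$, and that the choice of reduced expressions used in the construction of $\phi$ is compatible with reading the expression backwards at the level of the generators of $Q\cong W^B_{m_1}\times W^B_{m_2}$ implicit in the proof of Theorem \ref{theorem:ME in case q in I mult one}.
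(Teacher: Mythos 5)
Your proposal is correct and follows essentially the same route as the paper: define the anti-involution fixing the generators (so $\sigma_w\mapsto\sigma_w^\rho$), note it fixes $\textbf{e}$, cite Corollary \ref{corollary:e is full in vv} for fullness, transfer affine cellularity of $A^{\otimes(m-1)}\otimes_{\textbf{k}}\tilde{A}$ through the isomorphism of Theorem \ref{theorem:ME in case q in I mult one}, and conclude with Lemma \ref{lemma:Yang's lemma}. You are in fact more explicit than the paper about verifying that $\phi$ intertwines $\tau^{\otimes m}$ with the restricted involution, a step the paper leaves implicit.
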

\begin{proof}
Fix $\nu \in {^\theta}\mathbb{N}I_q$ with multiplicity one and take the associated VV algebra $\vv$. Fix $\textbf{e}:=\sum_{\lambda \in \Pi^\pm} \eilambda{\lambda}$ and define a $\textbf{k}$-involution on $\vv$ by,
\begin{equation*}
\begin{split}
i:\vv &\longrightarrow \vv \\
x_k &\mapsto x_k \\
\sigma_w &\mapsto \sigma_w^\rho \\
\idemp &\mapsto \idemp
\end{split}
\end{equation*}
for all $1 \leq k \leq m$, $w \in W_m^B$, $\textbf{i}\in {^\theta}I^{\nu}$. Then $i(\textbf{e})=\textbf{e}$ and, from Corollary \ref{corollary:e is full in vv}, $\vv \textbf{e} \vv = \vv$. Using Corollary \ref{corollary:A^otimes (m-1) otimes A is affine cellular} and Theorem \ref{theorem:ME in case q in I mult one}, $\textbf{e} \vv \textbf{e}$ is affine cellular with respect to the restriction of $i$. Then, by Lemma \ref{lemma:Yang's lemma}, $\vv$ is affine cellular with respect to $i$.
\end{proof}
\begin{corollary} \label{corollary:VV are aff cellular when p,q not in I}
The VV algebras $\vv$ in the setting $p,q \notin I$ are affine cellular.
\end{corollary}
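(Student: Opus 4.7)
The plan is to mirror the proof of Corollary \ref{corollary:VV algs q in I are affine cellular} and transfer affine cellularity from the KLR algebra $\subklr^+$ to $\vv$ by means of Lemma \ref{lemma:Yang's lemma}. First I would define a $\textbf{k}$-involution $i : \vv \longrightarrow \vv$ by $i(x_k) = x_k$, $i(\pi) = \pi$, $i(\idemp) = \idemp$ and $i(\sigma_w) = \sigma_w^\rho$ for every fixed reduced expression of $w \in W^B_m$. A routine check on the defining relations of Definition \ref{definition:vv algebra} shows that $i$ is a well-defined anti-involution of order two on $\vv$.

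Next, let $\textbf{e} = \sum_{\textbf{i} \in I^{\tilde{\nu}^+}} \idemp$ be the idempotent appearing in the proof of Theorem \ref{theorem:VV Morita equiv to KLR}. That proof establishes $\vv\textbf{e}\vv = \vv$, while Proposition \ref{proposition:klr an idempotent subalgebra} gives an isomorphism $\textbf{e}\vv\textbf{e} \cong \subklr^+$. Since $\textbf{e}$ is a sum of idempotents $\idemp$ each fixed by $i$, we have $i(\textbf{e}) = \textbf{e}$, and under the above isomorphism the restriction of $i$ to $\textbf{e}\vv\textbf{e}$ corresponds to the standard KLR anti-involution that fixes the $x_k$ and the $\idemp$ and sends $\sigma_w$ to $\sigma_w^\rho$.

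It then remains to note that $\subklr^+$ is itself affine cellular with respect to this restricted involution. Because $p$ is not a root of unity, $\subklr^+$ is affine quasi-hereditary by Section 10.1 of \cite{KleshchevAffineQH}, and the standard KLR anti-involution in type $A$ is a balanced involution in Kleshchev's sense; Proposition 9.8 of \cite{KleshchevAffineQH} then yields that $\subklr^+$ is affine cellular. Applying Lemma \ref{lemma:Yang's lemma} to $\textbf{e}$ and $i$ promotes this to affine cellularity of $\vv$. The only non-formal ingredient is the balancedness of the KLR anti-involution, which is the main technical point but is already established in the KLR literature; everything else is a direct transcription of the argument for Corollary \ref{corollary:VV algs q in I are affine cellular}.
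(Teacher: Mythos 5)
Your proposal is correct in outline and follows the same skeleton as the paper's proof: the same idempotent $\textbf{e}=\sum_{\textbf{i}\in I^{\tilde{\nu}^+}}\idemp$, fullness via Theorem \ref{theorem:VV Morita equiv to KLR}, the isomorphism $\textbf{e}\vv\textbf{e}\cong\subklr^+$ from Proposition \ref{proposition:klr an idempotent subalgebra}, the involution $i$ fixing $x_k$, $\idemp$ and sending $\sigma_w\mapsto\sigma_w^\rho$, and finally Lemma \ref{lemma:Yang's lemma}. Where you diverge is in how you source the affine cellularity of $\subklr^+$: the paper simply cites \cite{KleshchevLoubertMiemietz}, where (graded) affine cellularity of KLR algebras of type $A$ is proved directly and, importantly, also in affine type $A^{(1)}_r$, i.e.\ without any restriction on $p$ being a root of unity; this is what lets the paper record affine cellularity unconditionally in case (A1) in the summary table, in contrast with affine quasi-heredity. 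Your route instead deduces cellularity of $\subklr^+$ from affine quasi-heredity (Section 10.1 of \cite{KleshchevAffineQH}, which requires $p$ not a root of unity, so finite type $A^\infty_\infty$) together with a balanced involution and Proposition 9.8 of \cite{KleshchevAffineQH}. That argument is fine under the standing assumption that $p$ is not a root of unity, but it proves a strictly weaker statement than the citation used in the paper, and it leaves the balancedness of the standard KLR anti-involution as an appeal to the literature; to make it self-contained you would need to invoke the self-duality (up to even shift) of the graded simple modules over $\subklr^+$, in the same way the paper verifies balancedness explicitly for the path algebra $A$ before Corollary \ref{corollary:A^otimes (m-1) otimes A is affine cellular}. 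So: same transfer mechanism, but the paper's choice of input both avoids the balancedness verification and covers the root-of-unity case that your argument does not reach.
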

\begin{proof}
Fix $\nu \in {^\theta}\mathbb{N}I_\lambda$ and take the associated VV algebra $\vv$. Fix $\textbf{e}:=\sum_{\textbf{i} \in I^{\tilde{\nu}^+}} \idemp$ and let $i$ be the \textbf{k}-involution on $\vv$ from Corollary \ref{corollary:VV algs q in I are affine cellular}. Then $i(\textbf{e})=\textbf{e}$ and, from Theorem \ref{theorem:VV Morita equiv to KLR}, $\vv \textbf{e} \vv = \vv$. From \cite{KleshchevLoubertMiemietz} we know that KLR algebras of type $A$ are affine cellular. This means that $\textbf{e}\vv\textbf{e}$ is affine cellular with respect to the restriction of $i$. We now use Lemma \ref{lemma:Yang's lemma} to conclude that $\vv$ is affine cellular with respect to $i$.
\end{proof}
\addtocontents{toc}{\protect\setcounter{tocdepth}{1}}
\subsection{Morita Equivalence Between $\vv$ and $\subklr^+ \otimes_{\textbf{k}[z]} \tilde{A}$} \label{subsection:ME subsection}
\addtocontents{toc}{\protect\setcounter{tocdepth}{2}}
In this subsection we loosen the restriction on multiplicity by enforcing a multiplicity restriction only on $q$; we take $\nu \in {^\theta}\mathbb{N}I_q$, $|\nu|=2m$, in which $q$ appears with multiplicity one. We also allow $p$ be to a root of unity in this subsection.
\\
\\
Let $\tilde{A}$ be the path algebra of the quiver described in \ref{subsection:affine cellularity}. Then $\tilde{A}$ is a left $\textbf{k}[z]$-module via \begin{equation*}
\begin{split}
z\cdot a_1&=v_2v_1 a_1 \\
z\cdot a_2&=-v_1v_2 a_2.
\end{split}
\end{equation*}
We can also put the structure of a right $\textbf{k}[z]$-module on $\subklr^+$, the action of $z$ on $\subklr^+$ being multiplication by $\sum_{\textbf{i} \in I^{\tilde{\nu}}} x_{\varphi_{\textbf{i}}(q)} \idemp$, where $\varphi_{\textbf{i}}(q)$ denotes the position of $q$ in $\textbf{i}$.
\begin{lemma} \label{lemma:KLR^+ isomorphic to KLR^-}
For every $\nu \in {^\theta}\mathbb{N}I$ with $|\nu|=2m$ there is an isomorphism of KLR algebras, $\subklr^+ \cong \subklr^-$, given by
\begin{equation*}
\begin{split}
\psi: \subklr^+ &\longrightarrow \subklr^- \\
\idemp &\mapsto \etaidemp \\
x_j\idemp &\mapsto -x_{m-j+1}\etaidemp \\
\sigma_k \idemp &\mapsto \sigma_{m-k}\etaidemp
\end{split}
\end{equation*}
and extending \textbf{k}-linearly and multiplicatively, where $\eta \in \mathcal{D}(W^B_m/\sm)$ is the longest element.
\end{lemma}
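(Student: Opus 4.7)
The plan is to verify that $\psi$ is a well-defined algebra homomorphism by checking each KLR relation on generators, and then to construct an explicit inverse. The starting observation is combinatorial: under inversion $i \mapsto i^{-1}$, the quivers $\Gamma_{I^+}$ and $\Gamma_{I^-}$ have the same underlying graph but opposite orientations (the arrow $p^2 i \to i$ of $\Gamma_{I^+}$ corresponds to $i^{-1} \to p^{-2}i^{-1}$, which is the reverse of the standard arrow of $\Gamma_{I^-}$ between those two vertices). Composing inversion with the order-reversal $k \mapsto m+1-k$ on positions compensates for this: one checks that $\eta\textbf{i}$ is exactly the tuple $(i_m^{-1},\ldots,i_1^{-1})$, and consequently the local arrow-type between positions $(k,k+1)$ in $\textbf{i}$ coincides with the local arrow-type between positions $(m-k,m-k+1)$ in $\eta\textbf{i}$. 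One also has the tuple identity $\eta s_k\textbf{i}=s_{m-k}\eta\textbf{i}$.

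With these compatibilities in hand, the KLR relations (1)--(4) of Definition \ref{definition:klr algebra and relations} can be verified directly. The orthogonality and commutation relations are immediate, and $\sigma_k\idemp=\textbf{e}(s_k\textbf{i})\sigma_k$ becomes $\sigma_{m-k}\etaidemp=\textbf{e}(\eta s_k\textbf{i})\sigma_{m-k}$, which holds by the identity above. The five cases of the quadratic relation transform correctly because arrow-types match at the reversed positions, and $x_{k+1}-x_k$ maps to $-x_{m-k}-(-x_{m-k+1})=x_{m-k+1}-x_{m-k}$, which is the right-hand side prescribed by the corresponding relation in $\subklr^-$. For the braid relation, the $\pm\idemp$ correction terms acquire a sign change because the roles of ``$\rightarrow$'' and ``$\leftarrow$'' between consecutive positions of the triple are swapped in the reversed tuple; however, the substitution $\sigma_k\mapsto\sigma_{m-k}$ reindexes so that the commutator $\sigma_{k+1}\sigma_k\sigma_{k+1}-\sigma_k\sigma_{k+1}\sigma_k$ becomes $-(\sigma_{j+1}\sigma_j\sigma_{j+1}-\sigma_j\sigma_{j+1}\sigma_j)$ with $j=m-k-1$, contributing a compensating sign; in the $\leftrightarrow$ case the expression $2x_{k+1}-x_{k+2}-x_k$ likewise picks up an overall minus from the $x$-signs, matching. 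Finally, $(\sigma_k x_l - x_{s_k(l)}\sigma_k)\idemp$ is preserved because both sides acquire a factor of $-1$ from the signs on the $x$-variables, and $s_{m-k}(m-l+1)=m-s_k(l)+1$.

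To conclude, define $\psi':\subklr^-\to\subklr^+$ by the analogous formula (interchanging the roles of $I^+$ and $I^-$). Since $\eta$ is an involution on tuples, $-(-x_j)=x_j$, and $m-(m-k)=k$, the compositions $\psi\psi'$ and $\psi'\psi$ act as the identity on generators, so $\psi$ is an algebra isomorphism. The main technical obstacle is the case analysis for the quadratic and braid relations: one must keep track of how arrow orientations, the minus sign on the $x$-variables, and the reindexing $\sigma_k\mapsto\sigma_{m-k}$ interact. The combinatorial matching of arrow-types established in the first paragraph is precisely what ensures that in every case the sign changes conspire to cancel, reducing each verification to a short calculation.
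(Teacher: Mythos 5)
Your proposal is correct and follows the same route as the paper, which simply asserts that one checks the defining relations and that the map is clearly bijective; you have supplied the details of that check (the key point being that inversion reverses arrow orientations while position-reversal reverses them back, so all the sign changes cancel), together with the explicit inverse. No gaps.
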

\begin{proof}
One can use the relations to check that $\psi$ is indeed an algebra morphism and it is clear that this map is a bijection. 
\end{proof}
Let $\textbf{e}:=\sum_{\textbf{i} \in I^{\tilde{\nu}^+}}\idemp + \sum_{\textbf{i} \in I^{\tilde{\nu}^-}}\idemp$. Considering $\textbf{e}\vv\textbf{e}$ yields the following result.
\begin{lemma} \label{lemma:equal dimension formulas}
In the setting described above, $\textrm{dim}_q(\textbf{e} \vv \textbf{e})= \textrm{dim}_q(\subklr^+\otimes_{\textbf{k}[z]} \tilde{A})$. 
\end{lemma}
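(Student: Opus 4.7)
The plan is to expand both sides as rational functions in $q$ and verify they match. For the right-hand side, $\tilde{A}$ splits as a direct sum according to starting vertex, and, viewed as a graded left $\textbf{k}[z]$-module (with $z$ acting as left multiplication by $v_2v_1 - v_1v_2$), is free of rank $4$ on the homogeneous basis $\{a_1, a_2, v_1a_1, v_2a_2\}$ of degrees $0,0,1,1$ respectively. Tensoring on the left by $\subklr^+$ therefore yields a graded vector space isomorphism $\subklr^+ \otimes_{\textbf{k}[z]} \tilde{A} \cong \subklr^+ \oplus \subklr^+ \oplus q\subklr^+ \oplus q\subklr^+$, so $\mathrm{dim}_q(\subklr^+ \otimes_{\textbf{k}[z]} \tilde{A}) = 2(1+q)\,\mathrm{dim}_q \subklr^+$.

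For the left-hand side, I would write $\textbf{e} = \textbf{e}^+ + \textbf{e}^-$ with $\textbf{e}^\pm := \sum_{\textbf{i} \in I^{\tilde{\nu}^\pm}} \textbf{e}(\textbf{i})$ and decompose $\textbf{e}\vv\textbf{e} = \bigoplus_{\alpha,\beta \in \{+,-\}} \textbf{e}^\alpha\vv\textbf{e}^\beta$. Proposition \ref{proposition:klr an idempotent subalgebra} gives $\textbf{e}^\pm \vv \textbf{e}^\pm \cong \subklr^\pm$, and Lemma \ref{lemma:KLR^+ isomorphic to KLR^-} yields $\mathrm{dim}_q \subklr^- = \mathrm{dim}_q \subklr^+$, so the two diagonal summands together contribute $2\,\mathrm{dim}_q \subklr^+$. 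For the off-diagonal pieces, fix for each $w \in W^B_m$ a reduced expression $w = \eta s$ with $\eta \in \mathcal{D}(W^B_m/\sm)$ and $s \in \sm$. Using Lemma \ref{vv basis}, a basis element $\sigma_w x_1^{n_1}\cdots x_m^{n_m}\textbf{e}(\textbf{i})$ lies in $\textbf{e}^\mp \vv \textbf{e}^\pm$ precisely when $w$ flips every sign, and since the $2^m$ cosets of $\sm$ in $W^B_m$ are in bijection with sign patterns, this forces $\eta$ to be the longest element $w_0 \in \mathcal{D}(W^B_m/\sm)$.

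The key computation is then the uniform identity $\mathrm{deg}(\sigma_{w_0}\textbf{e}(\textbf{j})) = 1$ for every $\textbf{j} \in I^{\tilde{\nu}^+} \cup I^{\tilde{\nu}^-}$. Taking the reduced expression $w_0 = (s_0)(s_1 s_0)(s_2 s_1 s_0)\cdots(s_{m-1}\cdots s_1 s_0)$ and applying it right-to-left to $\textbf{e}(\textbf{j})$, each block cycles one entry of $\textbf{j}$ into position $1$, inverts it via $s_0 = \pi$, and then transports the now-inverted entry rightward through the sequence by the $s_k$'s with $k \geq 1$. Exactly one block encounters $q^{\pm 1}$ in position $1$ and contributes degree $1$ from its $\pi$-factor; all other $\pi$-factors contribute degree $0$ by definition of the grading. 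Each $\sigma_k$ with $k \geq 1$ occurring in $w_0$ then compares a just-inverted entry (now lying in the opposite branch of $\Gamma_{I_q}$) against an as-yet-uninverted entry in the original branch, and since $I_q^+$ and $I_q^-$ are disconnected in the underlying quiver under the hypotheses in force, every such $\sigma_k$ contributes degree $0$. Granted this, $\mathrm{deg}(\sigma_{w_0}\sigma_s\textbf{e}(\textbf{i})) = 1 + \mathrm{deg}(\sigma_s\textbf{e}(\textbf{i}))$, so the two off-diagonal pieces each contribute $q\,\mathrm{dim}_q\subklr^+$ (using Lemma \ref{lemma:KLR^+ isomorphic to KLR^-} again on the $\textbf{i} \in I^{\tilde{\nu}^-}$ side), giving $2q\,\mathrm{dim}_q\subklr^+$ in total. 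Summing the diagonal and off-diagonal contributions yields $\mathrm{dim}_q(\textbf{e}\vv\textbf{e}) = 2(1+q)\,\mathrm{dim}_q\subklr^+$, as required.

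The main obstacle is verifying the uniform degree-$1$ claim carefully: although the degree is independent of the reduced expression by the Varagnolo--Vasserot result, one must track the evolving intermediate idempotents through the chosen expression and confirm that no $\sigma_k$ ever ends up comparing two entries in the same branch of $\Gamma_{I_q}$. Both the multiplicity-one hypothesis on $q$ (without which several $\pi$-factors could encounter $q^{\pm 1}$) and the disconnectedness of $I_q^+$ and $I_q^-$ in $\Gamma_{I_q}$ (which forces every $\sigma_k$ in $w_0$ to act in degree $0$) are essential to the argument.
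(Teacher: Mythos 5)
Your proof is correct and follows essentially the same route as the paper: both sides are reduced to $2(1+q)\,\textrm{dim}_q(\subklr^+)$, with the left-hand side handled by observing that $\textbf{e}\sigma_\eta\textbf{e}$ vanishes unless $\eta=1$ (contributing degree $0$) or $\eta$ is the longest element of $\mathcal{D}(W^B_m/\sm)$ (contributing degree $1$). Your explicit tracking of the reduced expression of the longest element simply fills in the degree-$1$ claim that the paper asserts in a single sentence.
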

\begin{proof}
From Lemma \ref{lemma:KLR basis theorem} we have a basis
\begin{equation*}
\mathcal{B} = \lbrace \sigma_{\dot{w}} x_1^{n_1}\cdots x_m^{n_m}\idemp \mid w \in \mathfrak{S}_m, \textbf{i} \in I^{\tilde{\nu}}, n_i \in \mathbb{N}_0 \textrm{ } \forall i\rbrace
\end{equation*}
of $\subklr^+$, and $\tilde{A}$ has basis $\{ a_1, a_2, v_1 a_1, v_2 a_2, v_2v_1 a_1, v_1v_2 a_2, \ldots \}$. Since $za_1 = v_2v_1 a_1$ and $za_2=v_1v_2a_2$, we have the following basis for $\subklr^+ \otimes_{\textbf{k}[z]} \tilde{A}$. 
\begin{equation*}
\mathcal{B}^\prime = \{ b\otimes a_1,b\otimes a_2,b\otimes v_1 a_1, b\otimes v_2 a_2 \mid b \in \mathcal{B}\}.
\end{equation*} 
So the graded dimension of $\subklr^+ \otimes_{\textbf{k}[z]} \tilde{A}$ is 
\begin{equation*}
\begin{split}
\textrm{dim}_q(\subklr^+ \otimes_{\textbf{k}[z]} \tilde{A})&=2\textrm{dim}_q (\subklr^+) + 2q\textrm{dim}_q (\subklr^+) \\
&= 2(1+q)\textrm{dim}_q (\subklr^+).
\end{split}
\end{equation*}
For each $w \in W^B_m$ fix a reduced expression $w=\eta s$, for $\eta \in \mathcal{D}(W^B_m/\sm)$, $s \in \sm$. Take any basis element $\textbf{e} \sigma_w x_1^{n_1}\cdots x_m^{n_m} \textbf{e} \in \textbf{e}\vv\textbf{e}$. Note that 
\begin{equation*}
\textbf{e} \sigma_w x_1^{n_1}\cdots x_m^{n_m} \textbf{e} = \textbf{e} \sigma_\eta \textbf{e} \sigma_s x_1^{n_1}\cdots x_m^{n_m} \textbf{e}.
\end{equation*}
For this expression to be non-zero either $\eta=1$ or $\eta$ is the longest element in $\mathcal{D}(W^B_m/\sm)$. If $\eta=1$ then $\textbf{e}\sigma_\eta\textbf{e}$ has degree 0. If $\eta$ is the longest element in $\mathcal{D}(W^B_m/\sm)$ then $\textbf{e}\sigma_\eta\textbf{e}$ has degree 1. This is because $\sigma_\eta$ starts at an idempotent in $\subklr^+$ and ends at one in $\subklr^-$ so that there is precisely one factor of $\sigma_\eta$ which contributes to the degree of $\sigma_\eta$, namely $\pi\textbf{e}(q^{\pm 1}, \ldots)$. So,
\begin{equation*}
\begin{split}
\textrm{dim}_q(\textbf{e} \vv \textbf{e})&=2\frac{\sum_{\textbf{i} \in I^{\tilde{\nu}}}\big(\sum_{s\in \sm}q^{\textrm{deg}\sigma_s\idemp}+q\sum_{s\in \sm}q^{\textrm{deg}\sigma_s\idemp}\big)}{(1-q^2)^m} \\
&=2\textrm{dim}_q(\subklr^+)+2q\textrm{dim}_q(\subklr^+) \\
&=2(1+q)\textrm{dim}_q(\subklr^+) \\
&=\textrm{dim}_q(\subklr^+ \otimes_{\textbf{k}[z]} \tilde{A})
\end{split}
\end{equation*}
where we have used
\begin{equation*}
\textrm{dim}_q(\subklr^+)=\frac{\sum_{\textbf{i} \in I^{\tilde{\nu}}} \sum_{s\in \sm}q^{\textrm{deg}\sigma_s\idemp}}{(1-q^2)^m}
\end{equation*} 
in the second equality. Hence we have shown $\textrm{dim}_q(\textbf{e} \vv \textbf{e})= \textrm{dim}_q(\subklr^+\otimes_{\textbf{k}[z]} \tilde{A})$.
\end{proof}
\begin{theorem} \label{theorem:W and R x A are ME}
$\vv$ and $\subklr^+ \otimes_{\textbf{k}[z]} \tilde{A}$ are Morita equivalent.
\end{theorem}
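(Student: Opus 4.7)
The plan is to follow the same template as Theorems \ref{theorem:VV Morita equiv to KLR} and \ref{theorem:ME in case q in I mult one}. Setting $\textbf{e} := \sum_{\textbf{i} \in I^{\tilde{\nu}^+}}\idemp + \sum_{\textbf{i} \in I^{\tilde{\nu}^-}}\idemp$ as in Lemma \ref{lemma:equal dimension formulas}, it suffices to show that $\textbf{e}$ is full in $\vv$, making $\vv\textbf{e}$ a progenerator, and to construct an algebra isomorphism $\textbf{e}\vv\textbf{e} \cong \subklr^+ \otimes_{\textbf{k}[z]} \tilde{A}$; Morita equivalence then follows from $\mathrm{End}_\vv(\vv\textbf{e}) \cong \textbf{e}\vv\textbf{e}$.

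For fullness I would adapt the construction in the proof of Theorem \ref{theorem:VV Morita equiv to KLR}. Take $\idemp$ not already a summand of $\textbf{e}$. The multiplicity-one assumption on $q$ forces exactly one of $q, q^{-1}$ to appear in $\textbf{i}$; assume $q$ does (the other case is symmetric). Let $k_1 < \cdots < k_r$ be the positions of the $I^-$-entries of $\textbf{i}$, none of which equals $q^{-1}$, and flip them one at a time, smallest to largest, by bubbling each to position 1 via $\sigma$'s and then applying $\pi$. At every application of $\pi$ the first entry lies in $I^- \setminus \{q^{-1}\}$, so $\pi^2$ acts as the identity; at every intermediate swap, one entry lies in $I^+$ and the other in $I^-$, and since $q$ is not a power of $p$ these two branches of $\Gamma_I$ are disjoint, so $\sigma_k^2$ also acts as the identity. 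This is enough to give $\sigma_\omega \idempj \sigma_{\omega^{-1}} = \idemp$, where $\omega \in W^B_m$ is the element realised by the word above and $\idempj$ is the resulting summand of $\textbf{e}$, whence $\idemp \in \vv\textbf{e}\vv$.

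The harder part is the algebra isomorphism. Let $\eta \in \mathcal{D}(W^B_m/\sm)$ be the longest coset representative and let $\tau : \subklr^+ \to \subklr^-$ be the isomorphism of Lemma \ref{lemma:KLR^+ isomorphic to KLR^-}. I would define $\psi : \subklr^+ \otimes_{\textbf{k}[z]} \tilde{A} \to \textbf{e}\vv\textbf{e}$ on generators by $r \otimes a_1 \mapsto r$ (via the embedding $\subklr^+ \hookrightarrow \textbf{e}\vv\textbf{e}$ from Proposition \ref{proposition:klr an idempotent subalgebra}), $r \otimes a_2 \mapsto \tau(r)$, $\idemp \otimes v_1 a_1 \mapsto \sigma_\eta \idemp$ and $\idemp \otimes v_2 a_2 \mapsto \sigma_{\eta^{-1}}\textbf{e}(\eta\textbf{i})$, extending $\textbf{k}$-linearly and multiplicatively. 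The relations internal to each tensor factor are straightforward to verify; the crucial well-definedness condition is the $\textbf{k}[z]$-tensor relation $\idemp \otimes v_2 v_1 a_1 = x_{\varphi_{\textbf{i}}(q)}\idemp \otimes a_1$, which under $\psi$ amounts to the identity
\[
\sigma_{\eta^{-1}}\sigma_\eta \idemp = x_{\varphi_{\textbf{i}}(q)}\idemp
\]
in $\vv$. I would prove this by straightening the product $\sigma_{\eta^{-1}}\sigma_\eta$ against $\idemp$: all interior $\sigma_k^2$'s act as identity by the same branch argument as above; all $\pi^2$'s except one act as identity (their first entries are never $q^{\pm 1}$ during the unwinding); the single remaining $\pi^2$, acting on an idempotent with first entry $q$, contributes a factor $x_1$; and this $x_1$ is transported to position $\varphi_{\textbf{i}}(q)$ through the remaining $\sigma$'s via $\sigma_k x_l = x_{s_k(l)}\sigma_k$, whose error terms vanish because the adjacent entries encountered are never equal, by the multiplicity-one assumption on $q$.

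Finally, $\psi$ is surjective: the diagonal blocks of $\textbf{e}\vv\textbf{e}$ are hit by the images of $\subklr^+$ and $\subklr^-$, and the off-diagonal blocks are generated over these by the images of $\sigma_\eta$ and $\sigma_{\eta^{-1}}$. Combined with the graded dimension equality in Lemma \ref{lemma:equal dimension formulas}, this forces $\psi$ to be an isomorphism and Morita equivalence follows. The main obstacle will be verifying the straightening identity $\sigma_{\eta^{-1}}\sigma_\eta \idemp = x_{\varphi_{\textbf{i}}(q)}\idemp$, which requires careful bookkeeping through a reduced expression of the long element and vigilance about when the VV relations produce error terms.
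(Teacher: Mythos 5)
Your proposal follows essentially the same route as the paper: the same idempotent $\textbf{e}$, the same isomorphism $\textbf{e}\vv\textbf{e} \cong \subklr^+ \otimes_{\textbf{k}[z]}\tilde{A}$ built from the embedding of $\subklr^+$, the isomorphism $\subklr^+\cong\subklr^-$ of Lemma \ref{lemma:KLR^+ isomorphic to KLR^-}, and $\sigma_\eta$ for the longest coset representative, with the balanced-tensor relation reducing to $\sigma_\eta^\rho\sigma_\eta\idemp = x_{\varphi_{\textbf{i}}(q)}\idemp$, surjectivity by the $w=s\eta$ decomposition, injectivity from Lemma \ref{lemma:equal dimension formulas}, and fullness by the bubbling argument of Lemma \ref{lemma:isomorphic idemps}. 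The only point to add is that the second balanced-tensor relation $\phi(\idemp\otimes za_2)=\phi(\idemp z\otimes a_2)$ needs its own sign check (the $-x_1$ from $\pi^2\textbf{e}(q^{-1},\ldots)$ against the sign in $z\cdot a_2=-v_1v_2a_2$ and in $x_j\mapsto -x_{m-j+1}$), which your setup accommodates but does not spell out.
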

\begin{proof}
Let $\textbf{e}:=\sum_{\textbf{i} \in I^{\tilde{\nu}^+}}\idemp + \sum_{\textbf{i} \in I^{\tilde{\nu}^-}}\idemp$ and define a map
\begin{equation*}
\phi: \subklr^+ \otimes_{\textbf{k}[z]} \tilde{A} \longrightarrow \textbf{e}\vv\textbf{e}
\end{equation*}
as,
\begin{equation*}
\begin{split}
\idemp \otimes a_i &\mapsto \left\lbrace  \begin{array}{ll} \idemp \textrm{ if }i=1 \\ \textbf{e}(\eta\textbf{i}) \textrm{ if }i=2 \end{array} \right.  \\
x_j\idemp \otimes a_i &\mapsto \left\lbrace  \begin{array}{ll} x_j\idemp \textrm{ if }i=1 \\ -x_{m-j+1}\etaidemp \textrm{ if }i=2 \end{array} \right.  \\
\sigma_k \idemp \otimes a_i &\mapsto \left\lbrace  \begin{array}{ll} \sigma_k \idemp \textrm{ if }i=1 \\ \sigma_{m-k} \etaidemp \textrm{ if }i=2 \end{array} \right. \\
\idemp \otimes v_i a_i &\mapsto \left\lbrace  \begin{array}{ll}  \sigma_\eta \idemp \textrm{ if }i=1 \\ \sigma_\eta \etaidemp \textrm{ if }i=2, \end{array} \right.
\end{split}
\end{equation*}
for $1\leq j\leq m$, $1\leq k \leq m-1$ and where $\eta \in \mathcal{D}(W^B_m/\sm)$ is the longest element, extending \textbf{k}-linearly and multiplicatively. We must show that $\phi$ is well-defined so that $\phi$ is a morphism of $\textbf{k}$-algebras.
\\
\\
Clearly $\phi$ preserves relations when we restrict the second tensorand to $a_1$. That is, $\phi|_{\subklr^+ \otimes a_1}$ yields an injective morphism $\subklr^+ \hookrightarrow \textbf{e} \vv \textbf{e}$, so the relations are preserved.
\\
\\
Now consider restricting the second tensorand to $a_2$. The restriction of $\phi$ to \hbox{$\subklr^+ \otimes_{\textbf{k}[z]} a_2$} also yields an injection $\subklr^+ \hookrightarrow \textbf{e} \vv \textbf{e}$, with image $\subklr^-$, using Lemma \ref{lemma:KLR^+ isomorphic to KLR^-}. So the relations are also preserved in this case.
\\
\\
We must also show \hbox{$\phi(\idemp\otimes za_1)=\phi(\idemp z \otimes a_1)$} and \hbox{$\phi(\idemp\otimes za_2)=\phi(\idemp z\otimes a_2)$}. Firstly,
\begin{equation*}
\begin{split}
\phi(\idemp\otimes za_1)&=\phi(\idemp \otimes v_2v_1a_1)\\
&=\phi(\idemp\otimes v_2 a_2)\phi(\idemp\otimes v_1 a_1) \\
&=\sigma^\rho_\eta\sigma_\eta\idemp \\
&=x_{\varphi_\textbf{i}(q)}\idemp \\
&=\phi(x_{\varphi_\textbf{i}(q)}\idemp\otimes a_1)\\
&=\phi(\idemp z \otimes a_1).
\end{split}
\end{equation*}
Secondly, noting that $\varphi_{\eta\textbf{i}}(q^{-1})=m-\varphi_{\textbf{i}}(q)+1$,
\begin{equation*}
\begin{split}
\phi(\idemp\otimes za_2)&=\phi(\idemp \otimes -v_1v_2a_2) \\
&=-\phi(\idemp\otimes v_1 a_1)\phi(\idemp\otimes v_2 a_2) \\
&=-\sigma^\rho_\eta\sigma_\eta\etaidemp \\
&=-x_{\varphi_{\eta\textbf{i}}(q^{-1})}\etaidemp \\
&=-x_{m-\varphi_{\textbf{i}}(q)+1}\etaidemp\\
&=\phi(x_{\varphi_\textbf{i}(q)}\idemp\otimes a_2) \\
&=\phi(\idemp z \otimes a_2).
\end{split}
\end{equation*}
The multiplicative identity element in $\subklr^+ \otimes_{\textbf{k}[z]} \tilde{A}$ is $\big(\sum_{\textbf{i} \in I^{\tilde{\nu}^+}} \idemp\big) \otimes (a_1 + a_2)$ and $\textbf{e} \vv \textbf{e}$ has identity element $\textbf{e}$. We have,
\begin{equation*}
\begin{split}
\phi\Big(\big(\sum_{\textbf{i} \in I^{\tilde{\nu}^+}} \idemp\big) \otimes (a_1 + a_2)\Big)&=\sum_{\textbf{i} \in I^{\tilde{\nu}^+}} \phi (\idemp \otimes a_1) + \sum_{\textbf{i} \in I^{\tilde{\nu}^+}} \phi (\idemp \otimes a_2) \\
&=\sum_{\textbf{i} \in I^{\tilde{\nu}^+}} \idemp + \sum_{\textbf{i} \in I^{\tilde{\nu}^+}} \etaidemp \\
&= \sum_{\textbf{i} \in I^{\tilde{\nu}^+}} \idemp + \sum_{\textbf{i} \in I^{\tilde{\nu}^-}} \idemp \\
&= \textbf{e}.
\end{split}
\end{equation*}
So $\phi$ is indeed a $\textbf{k}$-algebra homomorphism. Now to show that $\phi$ is surjective. This is clear for idempotents and for elements $x_j\idemp \in \textbf{e} \vv \textbf{e}$. For every $w \in W^B_m$ fix a reduced expression of the form $w=s \eta$, for $s \in \sm$ and $\eta \in \mathcal{D}(\sm \backslash W^B_m)$ the longest element in the set of minimal length right coset representatives of $\sm$ in $W^B_m$. Take $\idemp \sigma_w \idempj \in \textbf{e} \vv \textbf{e}$, for some $w= s\eta \in W^B_m$.
\begin{itemize}
\item If $\idemp, \idempj \in \subklr^+$ then $\eta=1$ so that $w=s \in \sm$ and $\phi(\sigma_w \idempj \otimes a_1)=\sigma_w\idempj=\idemp\sigma_w\idempj$.
\item Similarly if $\idemp, \idempj \in \subklr^-$ then $\eta=1$ and $w=s \in \sm$. Let $s=s_{i_1}\cdots s_{i_k}$ be a reduced expression for $s$. Note that $\etaidemp \in \subklr^+$ since $\idempj \in \subklr^-$. Then $\sigma_s\idempj=\sigma_{i_1}\cdots\sigma_{i_k}\idempj$ and
\begin{equation*}
\begin{split}
\phi(\sigma_{m-i_1} \cdots \sigma_{m-i_k} \textbf{e} (\eta \textbf{j}) \otimes a_2) &=\sigma_{i_1} \cdots \sigma_{i_k} \idempj \\
&=\sigma_s\idempj.
\end{split}
\end{equation*}
\item Suppose now that $\idemp \in \subklr^+$ and $\idempj \in \subklr^-$. Then $w=s\eta$ and \hbox{$\sigma_w\idempj=\sigma_s \sigma_\eta \idempj$}. Suppose $s$ has fixed reduced expression $s=s_{i_1}\cdots s_{i_k}$. Again note that since $\idempj \in \subklr^-$, we have $\etaidempj \in \subklr^+$. Then
\begin{equation*}
\begin{split}
\phi(\sigma_{i_1}\cdots \sigma_{i_k}\etaidempj \otimes v_2 a_2)&=\phi(\sigma_{i_1}\cdots \sigma_{i_k}\etaidempj \otimes a_1)\phi(\etaidempj\otimes v_2 a_2) \\
&=\sigma_{i_1}\cdots \sigma_{i_k}\etaidempj \sigma_\eta \idempj \\
&=\sigma_s\sigma_\eta\idempj \\
&=\idemp\sigma_w\idempj.
\end{split}
\end{equation*}
The case $\idemp \in \subklr^-$ and $\idempj \in \subklr^+$ is similar.
\end{itemize}
It remains to show that $\phi$ is injective. Since $\phi$ is surjective and, by Lemma \ref{lemma:equal dimension formulas}, $\textrm{dim}_q(\textbf{e} \vv \textbf{e}) = \textrm{dim}_q(\subklr^+ \otimes_{\textbf{k}[z]} \tilde{A})$ injectivity follows immediately and we have a \textbf{k}-algebra isomorphism $\textbf{e}\vv \textbf{e} \cong\subklr^+ \otimes_{\textbf{k}[z]} \tilde{A}$. 
\\
\\
The proof of $\textbf{e}$ full in $\vv$ follows precisely the same reasoning as in the proof of Lemma \ref{lemma:isomorphic idemps}, so we omit this here and instead refer the reader to Lemma \ref{lemma:isomorphic idemps}. Then $\vv\textbf{e}$ is a progenerator  for $\vv$-Mod and we have Morita equivalence between $\vv$ and $\subklr^+ \otimes_{\textbf{k}[z]} \tilde{A}$.
\end{proof}
\subsection{Morita Equivalence in the Case $p \in I$} \label{section:p in I}
In this section we fix the following setting; assume $p \in I$, $q \not\in I$, $p$ is not a root of unity. As noted in \ref{subsection:a note on the cases p,q in I}, we take $\nu \in {^\theta}\mathbb{N}I_p$ so that $p$ always appears with multiplicity greater than 1, i.e. $\nu_p > 1$. In this section we impose a further restriction on our choice of $\nu \in {^\theta}\mathbb{N}I_p$; we pick $\nu$ so that $p$ appears with multiplicity exactly 2, i.e. $\nu_p=2$. As before, $\Pi^+$ denotes the root partitions of $\subklr^+$ and $\Pi^-$ denotes the root partitions of $\subklr^-$. Put $\Pi^\pm :=\Pi^+ \cup\Pi^-$.
\\
\\
As before, let $\tilde{A}$ be the path algebra of the following quiver.
\begin{center}
\begin{tikzpicture}
\matrix (m) [matrix of math nodes, row sep=3em,
column sep=3em, text height=1.5ex, text depth=0.25ex]
{a_1 & a_2 \\};
\path[->]
(m-1-1) edge [bend left=30] node [above] {$v_1$} (m-1-2)
(m-1-2) edge [bend left=30] node [below] {$v_2$} (m-1-1);
\end{tikzpicture}
\end{center}
We define the structure of a left $\textbf{k}[z]$-module on $\tilde{A}$ via 
\begin{equation*}
\begin{split}
z\cdot a_1&=v_2v_1 a_1 \\
z\cdot a_2&=v_1v_2 a_2.
\end{split}
\end{equation*}
We also define the structure of a right $\textbf{k}[z]$-module on $\subklr^+$. The action of $z$ on $\subklr^+$ is multiplication by $\sum_{\textbf{i} \in I^{\tilde{\nu}^+}} (x_{\varphi_{\textbf{i},1}(p)}+x_{\varphi_{\textbf{i},2}(p)}) \idemp$, where $\varphi_{\textbf{i},1}(p)$ denotes the position of the first $p$ appearing in $\textbf{i}$ and $\varphi_{\textbf{i},2}(p)$ denote the position of the second $p$ appearing in $\textbf{i}$. 
\\
\\
Using the definition of ${^\theta}I^\nu$, we note here that for any given $\textbf{i} \in {^\theta}I^\nu$ we have either $p$ appearing twice in $\textbf{i}$, $p^{-1}$ appearing before $p$ in $\textbf{i}$, $p$ appearing before $p^{-1}$ in $\textbf{i}$, or $p^{-1}$ appearing twice in $\textbf{i}$.
\begin{lemma} \label{lemma:p in I, e is full}
Take $\idemp \in \vv$ with $\textbf{i} \notin I^{\tilde{\nu}^+} \cup I^{\tilde{\nu}^-}$.
\begin{itemize}
\item[(i)] If $p$ appears twice in $\textbf{i}$ or if $p^{-1}$ appears before $p$ in $\textbf{i}$ then $\idemp \cong \idempj$ for some $\textbf{j} \in I^{\tilde{\nu}^+}$. 
\item[(ii)] If $p^{-1}$ appears twice in $\textbf{i}$ or if $p$ appears before $p^{-1}$ in $\textbf{i}$ then $\idemp \cong \idempj$ for some $\textbf{j} \in I^{\tilde{\nu}^-}$.
\end{itemize}
\end{lemma}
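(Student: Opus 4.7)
My plan is to mirror the construction used in the proof of Lemma \ref{lemma:isomorphic idemps} as closely as possible. For each of (i) and (ii) I will produce explicit elements $a \in \idemp\vv\idempj$ and $b \in \idempj\vv\idemp$ with $ab = \idemp$ and $ba = \idempj$, which is enough to conclude $\idemp \cong \idempj$ in $\vv$. Because $q \notin I$ throughout this subsection, the relation $\pi^{2} \textbf{e}(\textbf{k}) = \textbf{e}(\textbf{k})$ holds for every idempotent, so $\pi$ acts cleanly as an involution on the first strand, exchanging $k_{1}$ with $k_{1}^{-1}$; this is the only mechanism available for converting an $I^{-}$-entry into an $I^{+}$-entry.

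For case (i), I will list the positions $\varepsilon_{1} < \varepsilon_{2} < \cdots < \varepsilon_{d}$ of the $I^{-}$-entries of $\textbf{i}$ and define $\textbf{j} \in I^{\tilde{\nu}^{+}}$ to be the unique tuple such that, starting from $\idempj$ and applying in succession $\pi$ followed by $\sigma_{1}\cdots\sigma_{\varepsilon_{d}-1}$, then $\pi$ followed by $\sigma_{1}\cdots\sigma_{\varepsilon_{d-1}-1}$, and so on through $\varepsilon_{1}$, one produces $\idemp$. I then take
\begin{equation*}
a = \idemp\,\sigma_{\varepsilon_{1}-1}\cdots\sigma_{1}\pi\,\sigma_{\varepsilon_{2}-1}\cdots\sigma_{1}\pi\,\cdots\,\sigma_{\varepsilon_{d}-1}\cdots\sigma_{1}\pi\,\idempj,
\end{equation*}
and let $b$ be the mirror-image word, exactly as in Lemma \ref{lemma:isomorphic idemps}. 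Case (ii) is handled dually, with the target $\textbf{j}$ taken in $I^{\tilde{\nu}^{-}}$.

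I will verify $ab = \idemp$ by collapsing the product from the inside out: successive $\pi$-pairs cancel via $\pi^{2} = 1$, after which each $\sigma_{r}$ coming from $a$ meets its partner from $b$ to produce a factor $\sigma_{r}^{2}$ acting on a specific intermediate idempotent. Each such factor acts trivially provided the two active strands are not joined by an arrow of $\Gamma_{I_{p}}$. The hard part will be the one adjacency present here that is absent from the $q \in I$ setting of Lemma \ref{lemma:isomorphic idemps}, namely the edge between $p$ and $p^{-1}$: this is precisely what the ordering hypothesis is designed to address. In case (i), if both copies of $p$ occur in $\textbf{i}$ then $p^{-1}$ is absent from $\textbf{i}$ and the edge never arises, while if $p^{-1}$ occurs before $p$ in $\textbf{i}$ then every leftward slide of an $I^{-}$-entry to position $1$ takes place strictly to the left of the unique $p$, so no $p^{-1}$ is ever swapped past a $p$. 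Case (ii) is the mirror statement, and all remaining potential $\sigma^{2}$-obstructions involve only vertices of the form $p^{\pm(2k+1)}$ with $k > 0$, which are handled verbatim as in Lemma \ref{lemma:isomorphic idemps}.
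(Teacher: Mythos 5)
Your construction of $a$ and $b$ is exactly the paper's: it transplants the proof of Lemma \ref{lemma:isomorphic idemps} verbatim (as the paper itself does, only remarking that "the same argument" applies), and your explanation of why the sole dangerous adjacency, the arrow $p \rightarrow p^{-1}$ in $\Gamma_{I_p}$, never obstructs the $\sigma^2$-collapses is correct and in fact more detailed than the paper's own justification. One small imprecision: when $p^{-1}$ precedes $p$, other $I^-$-entries $p^{-(2k+1)}$ with $k>0$ lying to the right of $p$ do get swapped past $p$, but this is harmless since those vertices are not adjacent to $p$, so your conclusion stands.
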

\begin{proof} 
For $(i)$ assume first that $p$ appears twice in $\textbf{i}$, say $i_{k_1}=i_{k_2}=p$. We require $a \in \idemp \vv \idempj$, $b \in \idempj \vv \idemp$ such that $ab=\idemp$ and $ba=\idempj$, for some $\textbf{j} \in I^{\tilde{\nu}^+}$. 
\\
\\
Since $\textbf{i} \notin I^{\tilde{\nu}^+} \cup I^{\tilde{\nu}^-}$ there is a non-empty subset $\{ \varepsilon_1, \ldots, \varepsilon_d \} \subsetneq \{ 1, \ldots, m \}$, with $\varepsilon_r < \varepsilon_{r+1}$ for all $r$, such that $i_{\varepsilon_r}=p^{-(2n_r+1)}$, where $n_r \in \mathbb{Z}_{>0}$ for all $r$. Note also that $\varepsilon_r \neq k_1, k_2$ for all $r$, since we start with the assumption that $p$ appears twice in $\textbf{i}$. Put,
\begin{equation*}
\begin{split}
&a=\idemp\sigma_{\varepsilon_1-1}\cdots \sigma_1\pi\sigma_{\varepsilon_2-1}\cdots \sigma_1\pi \cdots \sigma_{\varepsilon_d-1} \cdots \sigma_1\pi\idempj \in \idemp\vv\idempj \\
&b=\idempj\pi\sigma_1\cdots \sigma_{\varepsilon_d-1}\pi \sigma_1\cdots \sigma_{\varepsilon_{d-1}-1}\cdots \pi\sigma_1\cdots\sigma_{\varepsilon_1-1}\idemp \in \idempj\vv\idemp.
\end{split}
\end{equation*}
Then $\idempj \in \vv$ is such that $\textbf{j} \in I^{\tilde{\nu}^+}$ and, from the relations, we know that $ab=\idemp$ and $ba=\idempj$ so that $\idemp\cong \idempj$. The same argument can be used for the case when $p^{-1}$ appears before $p$ in $\textbf{i}$. A similar argument is used for $(ii)$.
\end{proof}
\begin{lemma} \label{lemma:a relation in the p in I case}
For every $\textbf{i} \in I^{\tilde{\nu}^+}$ and $\eta \in \mathcal{D}(W^B_m/\sm)$, the longest element, we have
\begin{itemize} 
\item[(i)] $\sigma_\eta^\rho \sigma_\eta\idemp=(x_{\varphi_{\textbf{i},1}(p)}+x_{\varphi_{\textbf{i},2}(p)}) \idemp$
\item[(ii)] $\sigma_\eta^\rho\sigma_\eta \etaidemp=-(x_{\varphi_{\eta\textbf{i},1}(p^{-1})}+x_{\varphi_{\eta\textbf{i},2}(p^{-1})})\etaidemp$.
\end{itemize}
\end{lemma}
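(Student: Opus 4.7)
The plan is to prove both parts by induction on $m$ using the factorisation $\eta = \eta_{m-1}\,\tau_m$, where $\tau_m := s_{m-1}s_{m-2}\cdots s_1 s_0$ pushes the first position to position $m$ with inversion, and $\eta_{m-1}$ is the longest element of $\mathcal{D}(W^B_{m-1}/\mathfrak{S}_{m-1})$ embedded into $W^B_m$ via the generators $s_0, s_1, \ldots, s_{m-2}$ so that it affects only positions $1, \ldots, m-1$. This factorisation is reduced, so
\begin{equation*}
\sigma_\eta^\rho \sigma_\eta \idemp = \sigma_{\tau_m}^\rho \bigl(\sigma_{\eta_{m-1}}^\rho \sigma_{\eta_{m-1}}\bigr) \sigma_{\tau_m} \idemp,
\end{equation*}
which exposes an inductive structure since $\sigma_{\tau_m}$ sends $\idemp$ to $\idemp(i_2, \ldots, i_m, i_1^{-1}) \sigma_{\tau_m}$.

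The base case $m = 2$ forces $\textbf{i} = (p,p)$ and $\eta = s_0 s_1 s_0$, so $\sigma_\eta^\rho \sigma_\eta = \pi \sigma_1 \pi^2 \sigma_1 \pi$. Processing this from right to left, the middle $\pi^2$ is applied to the intermediate idempotent $\idemp(p, p^{-1})$ and collapses to the identity because $q \notin I$ forces $i_1 \neq q^{\pm 1}$. The remaining $\sigma_1^2$ then acts on $\idemp(p^{-1}, p)$ and contributes $x_2 - x_1$ via the arrow $p \to p^{-1}$ in $\Gamma_{I_p}$. Finally the outermost $\pi$ is pulled through the polynomial using $\pi(x_2 - x_1) = (x_1 + x_2)\pi$, and after a further cancellation $\pi^2 \idemp = \idemp$ one obtains $(x_1 + x_2) \idemp(p,p)$, which is exactly (i). Part (ii) is the parallel calculation starting from $\etaidemp = \idemp(p^{-1}, p^{-1})$; the relevant intermediate idempotent is now $\idemp(p, p^{-1})$, so $\sigma_1^2$ contributes $x_1 - x_2$ instead, and the same pass through $\pi x_1 = -x_1 \pi$ yields $\pi(x_1 - x_2) = -(x_1 + x_2)\pi$, producing the sign demanded by (ii).

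For the inductive step, $\sigma_{\eta_{m-1}}^\rho \sigma_{\eta_{m-1}}$ acts on positions $1, \ldots, m-1$ of $\idemp(i_2, \ldots, i_m, i_1^{-1})$, leaving position $m$ fixed. If $i_1 \notin \{p, p^{-1}\}$ then both $p$'s remain in positions $1, \ldots, m-1$ and the inductive hypothesis contributes $x_{\varphi_1'} + x_{\varphi_2'}$ for the positions of $p$ within $(i_2, \ldots, i_m)$; if $i_1 = p$ then only one $p$ remains there and one invokes a companion multiplicity-one statement for $\nu_p = 1$, yielding a single $x_{\varphi'}$ term. This polynomial is then conjugated outwards through $\sigma_{\tau_m}^\rho$ and $\sigma_{\tau_m}$ using the KLR commutation $\sigma_k x_l - x_{s_k(l)} \sigma_k = \pm \delta_{l \in \{k, k+1\}} \idemp$ together with $\pi x_l = (-1)^{\delta_{l,1}} x_l \pi$. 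The resulting correction terms, of the form $\sigma_w^\rho \sigma_w \idemp''$ for intermediate idempotents $\idemp''$, either collapse (because adjacent entries of $\idemp''$ are not joined by arrows in $\Gamma_{I_p}$, so that $\sigma_k^2 \idemp'' = \idemp''$) or contribute the missing $x_{\varphi_\textbf{i}(p)}$ precisely in the sub-case $i_1 = p$. Part (ii) is handled by the identical induction with signs tracked through each pass of $\pi x_1 = -x_1 \pi$.

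The main obstacle is the sub-case $i_1 = p$ in the inductive step, which forces us to invoke, and therefore also to prove, a parallel multiplicity-one statement for $\nu_p = 1$ that is not part of this lemma; the cleanest route is to prove the $\nu_p = 2$ and $\nu_p = 1$ versions simultaneously by a single induction. Equally delicate is verifying that the correction terms generated when commuting polynomials past $\sigma_{\tau_m}$ combine precisely to $x_{\varphi_{\textbf{i},1}(p)} + x_{\varphi_{\textbf{i},2}(p)}$, with all non-trivial contributions coming from the distinguished arrows $p \to p^{-1}$ in $\Gamma_{I_p}$ and no stray terms arising from other pairs of vertices.
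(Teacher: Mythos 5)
Your inductive framework (factoring $\eta=\eta_{m-1}\tau_m$ with $\tau_m=s_{m-1}\cdots s_1s_0$) is sound, and your base case $m=2$ is computed correctly; note that the paper itself offers no argument here beyond asserting the identities are immediate from the relations, so you are supplying a proof where none is given. However, there is a genuine error in your inductive step: the ``companion multiplicity-one statement'' you propose to invoke when $i_1=p$ is false. If at most one entry of $\textbf{i}$ equals $p$, then $\sigma_\eta^\rho\sigma_\eta\idemp=\idemp$, not $x_{\varphi_{\textbf{i}}(p)}\idemp$. This is forced by degrees: since $q\notin I$ every $\pi$-crossing has degree $0$, and the only arrow of $\Gamma_{I_p}$ joining $I_p^-$ to $I_p^+$ is $p\to p^{-1}$, so a crossing of $i_j^{-1}$ with $i_k$ has positive degree only when $i_j=i_k=p$; with a single $p$ this never happens, so $\deg(\sigma_\eta\idemp)=0$ and the product cannot equal a degree-$2$ element. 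Already for $m=1$ one has $\sigma_{s_0}^\rho\sigma_{s_0}\textbf{e}(p)=\pi^2\textbf{e}(p)=\textbf{e}(p)$.

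The consequence is that your bookkeeping in the sub-case $i_1=p$ does not close up. Writing $\sigma_\eta^\rho\sigma_\eta\idemp=\sigma_{\tau_m}^\rho\bigl(\sigma_{\eta_{m-1}}^\rho\sigma_{\eta_{m-1}}\bigr)\sigma_{\tau_m}\idemp$, the inner factor acts on $(i_2,\ldots,i_m)$, which contains only one $p$ when $i_1=p$, and hence contributes the identity; the entire polynomial $(x_1+x_{\varphi_{\textbf{i},2}(p)})\idemp$ must therefore be produced by $\sigma_{\tau_m}^\rho\sigma_{\tau_m}\idemp$ alone, coming from the single $\sigma_k^2$-crossing of the strand carrying $i_1^{-1}=p^{-1}$ with the strand carrying the other $p$ (which yields a difference $x_{k+1}-x_k$ that becomes a sum after passing through $\pi x_1=-x_1\pi$). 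It is not split between the two factors as you claim, and indeed it cannot be: the two contributions combine multiplicatively, so if both were nontrivial you would obtain a degree-$4$ element. Relatedly, the relation-(4) correction terms you worry about when commuting polynomials past $\sigma_{\tau_m}$ all vanish in this setting, because the two entries swapped by any $\sigma_k$ occurring in $\tau_m$ always lie one in $I_p^+$ and one in $I_p^-$, which are disjoint when $p$ is not a root of unity, so they are never equal. The fix is to replace your companion statement by $\sigma_\eta^\rho\sigma_\eta\idemp=\idemp$ whenever $\nu_p\le 1$, prove separately that $\sigma_{\tau_m}^\rho\sigma_{\tau_m}\idemp$ equals $\idemp$ if $i_1\ne p$ and $(x_1+x_{\varphi_{\textbf{i},2}(p)})\idemp$ if $i_1=p$, and then run your induction; with those two ingredients the argument, including the signs in part (ii), goes through.
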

\begin{proof}
These equalities are immediate consequences of the relations and the fact that $\Gamma_{I_p}$ has an arrow $p \longrightarrow p^{-1}$.
\end{proof}
\begin{theorem} \label{theorem:ME when p in I}
$\vv$ and $\subklr^+ \otimes_{\textbf{k}[z]} \tilde{A}$ are Morita equivalent.
\end{theorem}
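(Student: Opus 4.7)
The plan is to mimic the proof of Theorem \ref{theorem:W and R x A are ME} almost verbatim, substituting the role played by the single entry $q$ (with its position $\varphi_\textbf{i}(q)$) by the pair of entries $p$ (with positions $\varphi_{\textbf{i},1}(p),\varphi_{\textbf{i},2}(p)$), and leaning on Lemma \ref{lemma:p in I, e is full} and Lemma \ref{lemma:a relation in the p in I case} in place of the corresponding $q\in I$ statements. Set $\textbf{e}:=\sum_{\textbf{i}\in I^{\tilde\nu^+}}\idemp+\sum_{\textbf{i}\in I^{\tilde\nu^-}}\idemp$; the goal is to show $\vv\textbf{e}$ is a progenerator for $\vv$-Mod and that $\textbf{e}\vv\textbf{e}\cong \subklr^+\otimes_{\textbf{k}[z]}\tilde A$. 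Fullness of $\textbf{e}$ is immediate from Lemma \ref{lemma:p in I, e is full}: every $\idemp$ with $\textbf{i}\notin I^{\tilde\nu^+}\cup I^{\tilde\nu^-}$ is isomorphic to a summand of $\textbf{e}$, so every idempotent lies in $\vv\textbf{e}\vv$ and hence $\vv=\vv\textbf{e}\vv$.

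For the algebra isomorphism I would define $\phi:\subklr^+\otimes_{\textbf{k}[z]}\tilde A\longrightarrow\textbf{e}\vv\textbf{e}$ by exactly the same formulas as in Theorem \ref{theorem:W and R x A are ME}, using the isomorphism $\psi:\subklr^+\to\subklr^-$ of Lemma \ref{lemma:KLR^+ isomorphic to KLR^-} to describe the images of elements tensored with $a_2$. Since $\phi|_{\subklr^+\otimes a_1}$ and $\phi|_{\subklr^+\otimes a_2}$ each factor through an injection $\subklr^+\hookrightarrow\textbf{e}\vv\textbf{e}$ (the first via inclusion, the second via $\psi$ composed with inclusion of $\subklr^-$), the only non-trivial thing to verify for well-definedness is that the balancing over $\textbf{k}[z]$ is respected, namely $\phi(\idemp\otimes za_i)=\phi(\idemp z\otimes a_i)$ for $i=1,2$. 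This is precisely where Lemma \ref{lemma:a relation in the p in I case} is used: for $i=1$ I would compute
\begin{equation*}
\phi(\idemp\otimes v_2v_1 a_1)=\sigma_\eta^\rho\sigma_\eta\idemp=(x_{\varphi_{\textbf{i},1}(p)}+x_{\varphi_{\textbf{i},2}(p)})\idemp=\phi(\idemp z\otimes a_1),
\end{equation*}
and for $i=2$, using $\psi(x_j\idemp)=-x_{m-j+1}\etaidemp$ together with the sign in part (ii) of Lemma \ref{lemma:a relation in the p in I case} and the observation $\{\varphi_{\eta\textbf{i},1}(p^{-1}),\varphi_{\eta\textbf{i},2}(p^{-1})\}=\{m-\varphi_{\textbf{i},1}(p)+1,m-\varphi_{\textbf{i},2}(p)+1\}$, one obtains the matching identity. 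Note that the two minus signs (one from $\psi$, one from Lemma \ref{lemma:a relation in the p in I case}(ii)) cancel, which is consistent with the $\textbf{k}[z]$-module structure on $\tilde A$ in this section having $z\cdot a_2=+v_1v_2a_2$ (contrast with the $q\in I$ case where $z\cdot a_2=-v_1v_2a_2$).

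Surjectivity of $\phi$ can be checked by the same case analysis on pairs $(\idemp,\idempj)$ lying in $\subklr^+$ or $\subklr^-$ that was carried out in Theorem \ref{theorem:W and R x A are ME}; the argument does not depend on whether $q\in I$ or $p\in I$ because it uses only the factorisation $w=s\eta$ with $s\in\sm$ and $\eta\in\mathcal D(\sm\backslash W^B_m)$ the longest element. For injectivity I would run the graded dimension computation verbatim: writing each $w\in W^B_m$ as $w=\eta s$ with $\eta\in\mathcal D(W^B_m/\sm)$ either trivial or longest, the only non-vanishing contributions to $\textbf{e}\sigma_w\textbf{e}$ are in those two cases, and in both the degree of $\sigma_\eta$ is $0$ (since $i_1\neq q^{\pm1}$ in this regime, so every $\pi$-factor has degree zero). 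Thus $\mathrm{dim}_q(\textbf{e}\vv\textbf{e})=2(1+0)\cdot\mathrm{dim}_q(\subklr^+)$; wait, here one must be a bit careful — actually in the $p\in I$ setting $i_1$ can equal $p^{\pm 1}\in I$ but never $q^{\pm 1}$ (as $q\notin I$), so every $\pi$-move has degree $0$, giving $\mathrm{dim}_q(\textbf{e}\vv\textbf{e})=2\cdot\mathrm{dim}_q(\subklr^+)=\mathrm{dim}_q(\subklr^+\otimes_{\textbf{k}[z]}\tilde A)$, since the $\textbf{k}[z]$-balancing leaves $\tilde A$ a free $\textbf{k}[z]$-module of rank $2$ in degree $0$ and $0$ elsewhere when compared with the free right $\textbf{k}[z]$-module structure. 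Combined with surjectivity this gives $\textbf{e}\vv\textbf{e}\cong\subklr^+\otimes_{\textbf{k}[z]}\tilde A$, and Morita equivalence follows.

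The main obstacle I anticipate is the verification that $\phi$ respects the $\textbf{k}[z]$-balancing, particularly in the $a_2$-component, because both $\psi$ and Lemma \ref{lemma:a relation in the p in I case}(ii) contribute minus signs and one must carefully track the effect of $\eta$ on the pair of positions carrying $p$ (as opposed to the single position in the $q\in I$ case); the bookkeeping for the two positions $\varphi_{\textbf{i},1}(p),\varphi_{\textbf{i},2}(p)$ and their swap under $\eta$ is the principal new calculation. A secondary point requiring attention is to check that the graded dimension argument really gives exactly $2\,\mathrm{dim}_q(\subklr^+)$ in this setting (rather than $2(1+q)\,\mathrm{dim}_q(\subklr^+)$ as in the $q\in I$ case), because the degree of $\pi$ on an idempotent starting at $p^{\pm 1}$ needs to be correctly identified as $0$ from the definition of the grading on $\vv$, since $i_1=p^{\pm 1}\neq q^{\pm 1}$.
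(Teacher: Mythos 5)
Your overall strategy coincides with the paper's: the same idempotent $\textbf{e}$, the same map $\phi$ defined by the formulas of Theorem \ref{theorem:W and R x A are ME}, well-definedness of the $\textbf{k}[z]$-balancing via Lemma \ref{lemma:a relation in the p in I case}, surjectivity by the same case analysis on the factorisation $w=s\eta$, fullness of $\textbf{e}$ via Lemma \ref{lemma:p in I, e is full}, and injectivity by a graded dimension count. Your sign bookkeeping in the $a_2$-component (the cancellation of the minus sign from $\psi$ against the minus sign in Lemma \ref{lemma:a relation in the p in I case}(ii), matching $z\cdot a_2=+v_1v_2a_2$) is also correct.

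However, your graded dimension computation is wrong on both sides, and only lands on the right answer because the two errors compensate. You assert that $\sigma_\eta\idemp$ (for $\eta$ the longest element of $\mathcal{D}(W^B_m/\sm)$) has degree $0$ because every $\pi$-factor has degree $0$. The $\pi$-factors indeed contribute $0$ here, but the $\sigma$-factors do not: $\Gamma_{I_p}$ has an arrow $p\rightarrow p^{-1}$, so the crossing inside $\sigma_\eta\idemp$ that moves an already-inverted entry $p^{-1}$ past a remaining entry $p$ has degree $1$. Hence $\mathrm{deg}(\sigma_\eta\idemp)=1$, exactly as in the $q\in I$ case; this is forced by Lemma \ref{lemma:a relation in the p in I case}, since $\sigma_\eta^\rho\sigma_\eta\idemp$ there is a homogeneous element of degree $2$. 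Consequently $\mathrm{dim}_q(\textbf{e}\vv\textbf{e})=2(1+q)\,\mathrm{dim}_q(\subklr^+)$, not $2\,\mathrm{dim}_q(\subklr^+)$. On the other side, $\tilde{A}$ is free of rank $4$ over $\textbf{k}[z]$ with basis $\{a_1,a_2,v_1a_1,v_2a_2\}$ (two generators in degree $0$ and two in degree $1$), not of rank $2$ concentrated in degree $0$, so $\mathrm{dim}_q(\subklr^+\otimes_{\textbf{k}[z]}\tilde{A})=2(1+q)\,\mathrm{dim}_q(\subklr^+)$ as well. The equality you need is therefore true, and the correct justification is the verbatim analogue of Lemma \ref{lemma:equal dimension formulas} (which is what the paper invokes), but the reasoning as you have written it would not survive checking.
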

\begin{proof}
Let $\textbf{e}=\sum_{\textbf{i} \in I^{\tilde{\nu}^+}} \idemp + \sum_{\textbf{i} \in I^{\tilde{\nu}^-}} \idemp$.
Define a map 
\begin{equation*}
\phi: \subklr^+ \otimes_{\textbf{k}[z]} \tilde{A} \longrightarrow \textbf{e}\vv\textbf{e}
\end{equation*}
as,
\begin{equation*}
\begin{split}
\idemp \otimes a_i &\mapsto \left\lbrace  \begin{array}{ll} \idemp \textrm{ if }i=1 \\ \textbf{e}(\eta\textbf{i}) \textrm{ if }i=2 \end{array} \right.  \\
x_j\idemp \otimes a_i &\mapsto \left\lbrace  \begin{array}{ll} x_j\idemp \textrm{ if }i=1 \\ -x_{m-j+1}\etaidemp \textrm{ if }i=2 \end{array} \right.  \\
\sigma_k \idemp \otimes a_i &\mapsto \left\lbrace  \begin{array}{ll} \sigma_k \idemp \textrm{ if }i=1 \\ \sigma_{m-k} \etaidemp \textrm{ if }i=2 \end{array} \right. \\
\idemp \otimes v_i a_i &\mapsto \left\lbrace  \begin{array}{ll}  \sigma_\eta \idemp \textrm{ if }i=1 \\ \sigma_\eta \etaidemp \textrm{ if }i=2 \end{array} \right.
\end{split}
\end{equation*}
for $1 \leq j \leq m$, $1 \leq k \leq m-1$ and where $\eta \in \mathcal{D}(W^B_m/\sm)$ is the longest element, extending \textbf{k}-linearly and multiplicatively. We must show that $\phi$ is well-defined so that $\phi$ is a morphism of $\textbf{k}$-algebras.
\\
\\
Firstly, when we restrict the second tensorand to $a_1$ or to $a_2$, $\phi$ preserves the relations using the same argument as in the proof of Theorem \ref{theorem:W and R x A are ME}. It remains to check $\phi(\idemp \otimes za_k) = \phi(\idemp z \otimes a_k)$, for $k=1,2$. We have, using Lemma \ref{lemma:a relation in the p in I case} in the fourth equality,
\begin{equation*}
\begin{split}
\phi(\idemp\otimes za_1)
&=\phi(\idemp \otimes v_2v_1a_1) \\
&=\phi(\idemp \otimes v_2a_2)\phi(\idemp \otimes v_1a_1) \\
&=\sigma^\rho_\eta\sigma_\eta \idemp \\
&=(x_{\varphi_{\textbf{i},1}(p)}+x_{\varphi_{\textbf{i},2}(p)}) \idemp \\
&=\phi((x_{\varphi_{\textbf{i},1}(p)}+x_{\varphi_{\textbf{i},2}(p)}) \idemp \otimes a_1) \\
&=\phi(\idemp z \otimes a_1).
\end{split}
\end{equation*}
Secondly, noting again from Lemma \ref{lemma:a relation in the p in I case} that $\sigma_\eta^\rho\sigma_\eta \etaidemp=-(x_{\varphi_{\eta\textbf{i},1}(p^{-1})}+x_{\varphi_{\eta\textbf{i},2}(p^{-1})})\etaidemp$ and $m-\varphi_{\eta\textbf{i},k}(p^{-1})+1=\varphi_{\textbf{i},k}(p)$ for $k=1,2$ we have
\begin{equation*}
\begin{split}
\phi(\idemp\otimes za_2)
&=\phi(\idemp \otimes v_1v_2a_2) \\
&=\phi(\idemp \otimes v_1a_1)\phi(\idemp \otimes v_2a_2) \\
&=\sigma^\rho_\eta\sigma_\eta \etaidemp \\
&=-(x_{\varphi_{\eta\textbf{i},1}(p^{-1})}+x_{\varphi_{\eta\textbf{i},2}(p^{-1})}) \etaidemp \\
&=\phi((x_{\varphi_{\textbf{i},1}(p)}+x_{\varphi_{\textbf{i},2}(p)}) \idemp \otimes a_2) \\
&=\phi(\idemp z \otimes a_2).
\end{split}
\end{equation*}
The same calculation from the proof of Theorem \ref{theorem:W and R x A are ME} shows
\begin{equation*}
\phi(\big(\sum_{\textbf{i} \in I^{\tilde{\nu}^+}} \idemp\big) \otimes (a_1 + a_2))=\textbf{e}.
\end{equation*} 
Then $\phi$ is indeed a well-defined $\textbf{k}$-algebra morphism.
\\
\\
Now to show that $\phi$ is surjective. This is clear for idempotents $\idemp$ and for $x_j\idemp \in \textbf{e} \vv \textbf{e}$. For every $w \in W^B_m$ fix a reduced expression of the form $w=s \eta$, for $s \in \sm$ and $\eta \in \mathcal{D}(\sm \backslash W^B_m)$ the longest element in the set of minimal length right coset representatives of $\sm$ in $W^B_m$. Take $\idemp \sigma_w \idempj \in \textbf{e} \vv \textbf{e}$, for some $w= s\eta \in W^B_m$.
\begin{itemize}
\item If $\idemp, \idempj \in \subklr^+$ then $\eta=1$ so that $w=s \in \sm$ and $\phi(\sigma_w \idempj \otimes a_1)=\sigma_w\idempj=\idemp\sigma_w\idempj$.
\item Similarly if $\idemp, \idempj \in \subklr^-$ then $\eta=1$ and $w=s \in \sm$. Let $s=s_{i_1}\cdots s_{i_k}$ be a reduced expression for $s$. Then $\sigma_s\idempj=\sigma_{i_1}\cdots\sigma_{i_k}\idempj$ and
\begin{equation*}
\begin{split}
\phi(\sigma_{m-i_1} \cdots \sigma_{m-i_k} \textbf{e} (\eta \textbf{j}) \otimes a_2) &=\sigma_{i_1} \cdots \sigma_{i_k} \idempj \\
&=\sigma_s\idempj.
\end{split}
\end{equation*}
\item Suppose now that $\idemp \in \subklr^+$ and $\idempj \in \subklr^-$. Then $w=s\eta$ and \hbox{$\sigma_w\idempj=\sigma_s \sigma_\eta \idempj$}. Suppose $s$ has fixed reduced expression $s=s_{i_1}\cdots s_{i_k}$. Again note that since $\idempj \in \subklr^-$, we have $\etaidempj \in \subklr^+$. Then
\begin{equation*}
\begin{split}
\phi(\sigma_{i_1}\cdots \sigma_{i_k}\etaidempj \otimes v_2 a_2)&=\phi(\sigma_{i_1}\cdots \sigma_{i_k}\etaidempj \otimes a_1)\phi(\etaidempj\otimes v_2 a_2) \\
&=\sigma_{i_1}\cdots \sigma_{i_k}\etaidempj \sigma_\eta \idempj \\
&=\sigma_s\sigma_\eta\idempj \\
&=\idemp\sigma_w\idempj.
\end{split}
\end{equation*}
The case $\idemp \in \subklr^-$ and $\idempj \in \subklr^+$ is similar.
\end{itemize}
It remains to show that $\phi$ is injective. Since $\phi$ is surjective we can show injectivity by showing equality between the graded dimensions of $\subklr^+\otimes_{\textbf{k}[x]}\tilde{A}$ and $\textbf{e}\vv\textbf{e}$. This follows from the analogue of Lemma \ref{lemma:equal dimension formulas} applied to the current setting. Hence $\phi$ is an isomorphism. 
\\
\\
By Lemma \ref{lemma:p in I, e is full} we have $\textbf{e}$ full in $\vv$. This proves Morita equivalence between $\textbf{e}\vv\textbf{e}$ and $\subklr^+\otimes_{\textbf{k}[x]}\tilde{A}$.
\end{proof}
\subsection{Summary of results.} In various settings in which one defines VV algebras we find a full idempotent $\textbf{e} \in \vv$ and an algebra $S$ such that $\textbf{e}\vv\textbf{e} \cong S$. From this it follows that $\vv$ and $S$ are Morita equivalent, and we call $\textbf{e}$ a Morita idempotent. We are then able to use these Morita equivalences to prove, in certain settings, that VV algebras are affine quasi-hereditary and graded affine cellular. We now summarise the results of Section 2 in the table below. We write `NA' to mean not applicable and we write `?' to indicate that these questions have not yet been answered.
\begin{table}[h]
    \begin{center}
\renewcommand{\arraystretch}{2}
    \begin{tabular}{| >{\centering\arraybackslash}m{1in} | >{\centering\arraybackslash}m{1in} | >{\centering\arraybackslash}m{1in} | >{\centering\arraybackslash}m{1.4in}| >{\centering\arraybackslash}m{1.25in} |}

    \hline

    \textbf{Setting} & $\mathbf{S}$ & \textbf{Morita idempotent} & \textbf{Affine quasi-hereditary} & \textbf{(Graded) affine cellular}  \parbox{0pt}{\rule{0pt}{1ex+\baselineskip}} \\ \hline
    $I\cap J=\emptyset$ & $\w_{\nu_1} \otimes_{\textbf{k}} \w_{\nu_2}$ & $\sum_{\substack{\textbf{i} \in {^\theta}I^{\nu_1} \\ \textbf{j} \in {^\theta}J^{\nu_2}}}\textbf{e}(\textbf{i}\textbf{j})$ & NA & NA \\[0ex] \hline
    $p,q \not\in I$&$\subklr^+$&$\sum_{\textbf{i} \in I^{\tilde{\nu}}}\idemp$&$\checkmark$ (if $p\neq \sqrt[n]{1}$)&$\checkmark$ \\[0ex] \hline
$q \in I$, $p \not\in I$, $p\neq \sqrt[n]{1}$, $\textrm{mult}(\nu)=1$&$A^{\otimes (m-1)} \otimes_{\textbf{k}} \tilde{A}$&$\sum_{\lambda \in \Pi^{\pm}} \textbf{e}(\textbf{i}_\la)$&$\checkmark$&$\checkmark$ \\[0ex] \hline
  $q \in I$, $p \not\in I$, $\textrm{mult}(q)=1$ & $\subklr^+ \otimes_{\textbf{k}[z]} \tilde{A}$ & $\sum_{\textbf{i}\in I^{\tilde{\nu}^{\pm}}}\idemp$ & ? & ? \parbox{0pt}{\rule{0pt}{5ex+\baselineskip}}\\ \hline
 $p \in I$, $q \not\in I$, $p\neq \sqrt[n]{1}$, $\textrm{mult}(p)=2$ & $\subklr^+ \otimes_{\textbf{k}[z]} \tilde{A}$ & $\sum_{\textbf{i}\in I^{\tilde{\nu}^{\pm}}}\idemp$ & ? & ? \parbox{0pt}{\rule{0pt}{5ex+\baselineskip}}\\ \hline

  \end{tabular}
  \end{center}
\end{table}

\bibliography{./MyBibliography.bib}
\bibliographystyle{amsalpha}
\addcontentsline{toc}{section}{References}
\end{document}